%% LyX 2.1.4 created this file.  For more info, see http://www.lyx.org/.
%% Do not edit unless you really know what you are doing.
\documentclass[11pt,english]{amsart}
\usepackage[latin9]{inputenc}
\usepackage[a4paper]{geometry}
\geometry{verbose,tmargin=3.3cm,bmargin=3.3cm,lmargin=3.3cm,rmargin=3.3cm}
\usepackage{verbatim}
\usepackage{amstext}
\usepackage{amsthm}
\usepackage{amssymb}

\makeatletter
%%%%%%%%%%%%%%%%%%%%%%%%%%%%%% Textclass specific LaTeX commands.
\numberwithin{equation}{section}
\numberwithin{figure}{section}
\theoremstyle{plain}
\newtheorem{thm}{\protect\theoremname}[section]
  \theoremstyle{plain}
  \newtheorem{cor}[thm]{\protect\corollaryname}
  \theoremstyle{plain}
  \newtheorem{prop}[thm]{\protect\propositionname}
  \theoremstyle{definition}
  \newtheorem{example}[thm]{\protect\examplename}
  \theoremstyle{definition}
  \newtheorem{defn}[thm]{\protect\definitionname}
  \theoremstyle{remark}
  \newtheorem{rem}[thm]{\protect\remarkname}
  \theoremstyle{plain}
  \newtheorem{lem}[thm]{\protect\lemmaname}

%%%%%%%%%%%%%%%%%%%%%%%%%%%%%% User specified LaTeX commands.

\usepackage[english]{babel}
\usepackage{amsfonts}
\usepackage[all]{xy}
\usepackage{tikz}
\usetikzlibrary{arrows}
\xyoption{arc}
\CompileMatrices

\title[Generating series for for $E$-polynomials of character varieties]{Generating series for the $E$-polynomials of $GL(n,{\mathbb C})$-character varieties}

\author[C. Florentino]{Carlos Florentino}

\address{Departamento de Matem\'{a}tica, Faculdade de Ci\^encias, Univ. de Lisboa, Campo Grande, Edf. C6, Lisbon, Portugal}

\email{caflorentino@ciencias.ulisboa.pt}

\author[A. Nozad]{Azizeh Nozad}

\address{School of Mathematics, Institute for Research in Fundamental Sciences (IPM), P.O.Box: 19395-5746, Tehran, Iran}

\email{anozad@ipm.ir}

\author[A. Zamora]{Alfonso Zamora}

\address{Departamento Interfacultativo de Matem\'atica Aplicada y Estad\'istica, Facultad
de Ciencias Econ\'omicas y Empresariales, Universidad CEU San Pablo, Juli\'an Romea 23, 28003 Madrid, Spain}

\email{alfonso.zamorasaiz@ceu.es}

\thanks{This work was partially supported by CAMGSD and CMAF-CIO of the University of Lisbon, the projects PTDC/MAT-PUR/30234/2017, FCT Portugal, a grant from IPM, Iran and project MTM2016-79400-P by the Spanish Ministerio de Econom\'ia y Competitividad.}

\keywords{representations of finitely presented groups, character varieties, E-polynomials, Hodge theory}

% MSC 14L30, Group actions on varieties or schemes (quotients)
% MSC 32S35, Mixed Hodge theory of singular varieties
% MSC 14D20,  Algebraic moduli problems, moduli of vector bundles

\renewcommand{\hom}{\mathrm{Hom}}
\def\quot{/\!\!/}

\newcommand{\pexp}{\operatorname{PExp}}

\newcommand{\sym}{\operatorname{Sym}}

\makeatother

\usepackage{babel}
  \providecommand{\corollaryname}{Corollary}
  \providecommand{\definitionname}{Definition}
  \providecommand{\examplename}{Example}
  \providecommand{\lemmaname}{Lemma}
  \providecommand{\propositionname}{Proposition}
  \providecommand{\remarkname}{Remark}
\providecommand{\theoremname}{Theorem}

\begin{document}
\begin{abstract}
With $G=GL(n,\mathbb{C})$, let $\mathcal{X}_{\Gamma}G$ be the $G$-character
variety of a given finitely presented group $\Gamma$, and let $\mathcal{X}_{\Gamma}^{irr}G\subset\mathcal{X}_{\Gamma}G$
be the locus of irreducible representation conjugacy classes. We provide
a concrete relation, in terms of plethystic functions, between the
generating series for $E$-polynomials of $\mathcal{X}_{\Gamma}G$
and the one for $\mathcal{X}_{\Gamma}^{irr}G$, generalizing a formula
of Mozgovoy-Reineke \cite{MR}. The proof uses a natural stratification
of $\mathcal{X}_{\Gamma}G$ coming from affine GIT, the combinatorics
of partitions, and the formula of MacDonald-Cheah for symmetric products;
we also adapt it to the so-called Cartan brane in the moduli space
of Higgs bundles. Combining our methods with arithmetic ones yields
explicit expressions for the $E$-polynomials, and Euler characteristics,
of the irreducible stratum of $GL(n,\mathbb{C})$-character varieties
of some groups $\Gamma$, including surface groups, free groups, and
torus knot groups, for low values of $n$. 
\end{abstract}

\maketitle

\section{Introduction}

Let $G$ be a complex reductive algebraic group, $\Gamma$ be a finitely
presented group, such as the fundamental group of a compact manifold
or a finite $CW$-complex, and let 
\[
\mathcal{X}_{\Gamma}G=\hom(\Gamma,G)\quot G
\]
be the \emph{$G$-character variety of $\Gamma$}: the (affine) geometric
invariant theory quotient of the algebraic variety of representations
of $\Gamma$ into $G$. When the group $\Gamma$ is the fundamental
group of a Riemann surface (or more generally, a Kähler group) these
varieties are homeomorphic to moduli spaces of $G$-Higgs bundles
via the non-abelian Hodge correspondence (see, for example \cite{Sim}),
spaces which have been studied in connection to important problems
in Mathematical-Physics in the context of mirror symmetry, and in
the quantum field theory interpretation of the geometric Langlands
correspondence \cite{KW}.

The study of geometric and topological properties of character varieties
is an active topic and there are many recent advances in the computation
of their Poincaré polynomials and other invariants, especially in
the surface group case and for related groups $\Gamma$. With the
introduction of arithmetic methods, Hausel and Rodriguez-Villegas
\cite{HRV1} showed that many of these varieties are of polynomial
type, which allows, upon applying a theorem of N. Katz \cite[Appendix]{HRV1}
to infer their $E$-polynomials by counting the number of points over
finite fields. The fact that moduli spaces of Higgs bundles have pure
cohomology allows the derivation of the Poincaré polynomial from the
$E$-polynomial, and this approach was particularly successful in
the case of smooth moduli spaces (see the works of Schiffmann, Mellit
\cite{Sc,Me}, and references therein).

However, explicitly computable formulae for these polynomials are
very hard to obtain, in particular for many well known \emph{singular}
character varieties, as one can infer from the geometric methods of
Logares, Muñoz, Newstead and Lawton \cite{LMN}, \cite{LM} and from
the arithmetic approach of Baraglia and Hekmati \cite{BH}, which
become intractable for higher dimensional groups $G$.

In this article, we introduce another point of view in the computations
of $E$-polynomials of $GL(n,\mathbb{C})$-character varieties for
\emph{arbitrary} finitely presented $\Gamma$. In particular, our
methods yield formulae for $E$-polynomials of character varieties
which are not necessarily of polynomial type. The new approach is
based on a stratification of $GL(n,\mathbb{C})$-character varieties
by partition type, and relates well with geometric and arithmetic
techniques, relying also on the combinatorics of the plethystic functions,
that have been previously used with success in connection with counting
formulae for moduli spaces of polynomial type over finite fields.

This new perspective on $E$-polynomial calculations for character
varieties, unveils another connection between the representation theory
of $GL(n,\mathbb{C})$, and that of the symmetric group $S_{n}$.
A similar approach may be possible for other reductive groups $G$,
yelding a relation between effective $E$-polynomial computations
for $G$-character varieties of an arbitrary $\Gamma$, and the representation
theory of the Weyl group of $G$. Our approach is also intimately
related to the plethystic program for counting gauge invariant operators
in supersymmetric quantum field theories, where a fundamental role
is played by symmetric products of the moduli spaces of vacua (see
\cite{FHH}). In another direction, by combining our approach with
previous results on character varieties of free groups, we were able
to prove (see \cite{FNZ}) that the $E$-polynomials of $\mathcal{X}_{\Gamma}SL(n,\mathbb{C})$
and of $\mathcal{X}_{\Gamma}PGL(n,\mathbb{C})$ agree for all $n\in\mathbb{N}$,
when $\Gamma$ is a free group, an equality predicted in \cite[Rmk. 9]{LM}
(and proved there for $n=2,3$).

We now outline the article, and some of the main results. In sections
2 and 3 we present the main properties of $E$-polynomials defined
from mixed Hodge structures on complex quasi-projective varieties,
and we describe natural methods for stratifying general $G$-character
varieties in the context of affine geometric invariant theory (GIT).
Since we always work over $\mathbb{C}$, we will abbreviate $GL(n,\mathbb{C})$
to $GL_{n}$. Let $E(X;u,v)$ denote the $E$-polynomial (in two variables
$u,v$) of a quasi-projective complex variety $X$. In section 4 we
introduce the stratification by partition type of the character varieties
$\mathcal{X}_{\Gamma}GL_{n}$, for arbitrary $\Gamma$. Along with
$\mathcal{X}_{\Gamma}GL_{n}$, we consider what we call the \emph{irreducible
character varieties:} 
\[
\mathcal{X}_{\Gamma}^{irr}GL_{n}\subset\mathcal{X}_{\Gamma}GL_{n},
\]
which are Zariski open subvarieties consisting of (equivalence classes
of) irreducible representations $\rho:\Gamma\to GL_{n}$. Let us denote
the plethystic exponential of a formal power series $f(x,y,z)\in\mathbb{Q}[x,y][[z]]$
by $\pexp(f)$ (definition in Section 4). We prove: 
\begin{thm}
\label{thm:main}Let $\Gamma$ be a finitely generated group. Then,
in $\mathbb{Q}[u,v][[t]]$: 
\[
\sum_{n\geq0}E(\mathcal{X}_{\Gamma}GL_{n};u,v)\,t^{n}=\pexp\left(\sum_{n\geq1}E(\mathcal{X}_{\Gamma}^{irr}GL_{n};u,u)\,t^{n}\right).
\]

\end{thm}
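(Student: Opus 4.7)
The plan is to stratify $\mathcal{X}_{\Gamma}GL_n$ by ``polystable type'' (Jordan--H\"older data), to identify each stratum with a product of symmetric products of irreducible character varieties of smaller rank, and then to combine the MacDonald--Cheah formula with the multiplicativity of the plethystic exponential. Concretely, every closed point of $\mathcal{X}_{\Gamma}GL_n$ corresponds to an isomorphism class of polystable representations $\rho\cong\bigoplus_{i}\rho_{i}^{\oplus m_i}$ with pairwise non-isomorphic irreducibles $\rho_i$ of rank $n_i$ satisfying $\sum_i m_i n_i=n$. Grouping summands by rank, I would assign to $\rho$ the tuple $(D_d)_{d\geq1}$, where $D_d\in\sym^{k_d}(\mathcal{X}_{\Gamma}^{irr}GL_d)$ is the unordered multiset of rank-$d$ summands weighted by multiplicity, and $\sum_{d\geq1}d\,k_d=n$.

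The first step is to upgrade this set-theoretic correspondence into an algebraic identification: for each admissible $(k_d)$, the stratum $S_{(k_d)}\subset\mathcal{X}_{\Gamma}GL_n$ should be locally closed, with $E$-polynomial equal to that of $\prod_d\sym^{k_d}(\mathcal{X}_{\Gamma}^{irr}GL_d)$. This is where the affine GIT framework developed in Section~3 does the work: a Luna-type description of closed orbits, together with the fact that the stabilizer of a polystable orbit is determined solely by the multiplicities $m_i$, controls the GIT-theoretic structure of the strata in the form required.

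Granted this identification, additivity of $E$-polynomials over locally closed stratifications and their multiplicativity over direct products yield
\[
\sum_{n\geq0}E(\mathcal{X}_{\Gamma}GL_n;u,v)\,t^n \;=\; \prod_{d\geq1}\sum_{k\geq0}E\bigl(\sym^k(\mathcal{X}_{\Gamma}^{irr}GL_d);u,v\bigr)\,t^{dk}.
\]
The MacDonald--Cheah formula then expresses each inner sum as a plethystic exponential of the $E$-polynomial of $\mathcal{X}_{\Gamma}^{irr}GL_d$ times $t^d$, and the standard identity $\prod_d\pexp(f_d)=\pexp\bigl(\sum_d f_d\bigr)$ collapses the product over $d$ into a single plethystic exponential of $\sum_{d\geq1}E(\mathcal{X}_{\Gamma}^{irr}GL_d;u,u)\,t^d$, with the diagonal specialization $v=u$ dictated by the precise form of MacDonald--Cheah being applied, producing the right-hand side.

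The main obstacle will be the first step: verifying that the Jordan--H\"older stratification of $\mathcal{X}_{\Gamma}GL_n$ is genuinely algebraic and that each stratum is indistinguishable from the corresponding product of symmetric products at the level of $E$-polynomials. A secondary subtle point is the exact variant of MacDonald--Cheah invoked, which is what is responsible for the $v=u$ specialization appearing on the right-hand side; in particular, one must track how the formula interacts with the possibly non-pure mixed Hodge structures on the irreducible strata. Once these geometric and Hodge-theoretic ingredients are secured, the remaining manipulations are formal identities for $\pexp$.
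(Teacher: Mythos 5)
Your proposal follows essentially the same route as the paper: stratification of $\mathcal{X}_{\Gamma}GL_{n}$ by polystable (partition) type, identification of each stratum with a product of symmetric products of irreducible character varieties of lower rank, and then Cheah's formula combined with the multiplicativity of $\pexp$. One small correction: the $(u,u)$ on the right-hand side of the statement is a typo for $(u,v)$ --- Cheah's formula holds in both Hodge variables (as does the whole argument), so it does not force the diagonal specialization $v=u$ that you invoke, and no extra care about non-pure mixed Hodge structures is needed at that step.
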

\begin{comment}
This generalizes a formula of Mozgovoy-Reineke, who dealt with the
case $\Gamma=F_{r}$, a free group (\cite{MR}). 
\end{comment}
Unravelling the above power series, and the definitions and properties
of the plethystic functions, we obtain a closed formula for each individual
$E$-polynomial of $\mathcal{X}_{\Gamma}GL_{n}$ as a finite sum in
the $E$-polynomials of the irreducible character varieties $\mathcal{X}_{\Gamma}^{irr}GL_{n}$
of lower dimension, indexed by what we call \emph{rectangular partitions
of $n$} (see Definition \ref{def:rectangular-partition}). 
\begin{cor}
\label{cor:main}For every $n$ and $\Gamma$ as above, 
\[
E(\mathcal{X}_{\Gamma}GL_{n};u,v)=\sum_{[[k]]\in\mathcal{RP}_{n}}\ \prod_{l,h=1}^{n}\frac{E(\mathcal{X}_{\Gamma}^{irr}GL_{l};u^{h},v^{h})^{k_{l,h}}}{k_{l,h}!\,h^{k_{l,h}}},
\]
where $\mathcal{RP}_{n}$ is the (finite) set of all rectangular partitions
of $n$. 
\end{cor}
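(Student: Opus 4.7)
The plan is to derive the corollary by extracting the coefficient of $t^{n}$ on each side of the identity in Theorem~\ref{thm:main}, using the explicit power-series form of the plethystic exponential fixed in Section~4. Recall that for $f(u,v,t)\in\mathbb{Q}[u,v][[t]]$ with $f|_{t=0}=0$, one has
$$\pexp(f)(u,v,t)=\exp\!\left(\sum_{h\geq 1}\tfrac{1}{h}\,f(u^{h},v^{h},t^{h})\right),$$
so the whole argument is a purely algebraic unwinding of this generating-function identity.

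First I apply the above to $f(u,v,t)=\sum_{l\geq 1}E(\mathcal{X}_{\Gamma}^{irr}GL_{l};u,v)\,t^{l}$, obtaining
$$\pexp(f)=\exp\!\left(\sum_{l,h\geq 1}\frac{E(\mathcal{X}_{\Gamma}^{irr}GL_{l};u^{h},v^{h})}{h}\,t^{lh}\right).$$
Next I use the multiplicativity $\exp(\sum a_{l,h})=\prod\exp(a_{l,h})$ and expand each exponential as a Taylor series:
$$\pexp(f)=\prod_{l,h\geq 1}\ \sum_{k_{l,h}\geq 0}\frac{1}{k_{l,h}!\,h^{k_{l,h}}}\,E(\mathcal{X}_{\Gamma}^{irr}GL_{l};u^{h},v^{h})^{k_{l,h}}\,t^{l h k_{l,h}}.$$

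Then I compare coefficients of $t^{n}$ on both sides. The left-hand side of Theorem~\ref{thm:main} contributes $E(\mathcal{X}_{\Gamma}GL_{n};u,v)$, while on the right-hand side the coefficient of $t^{n}$ is a sum over all families $(k_{l,h})_{l,h\geq 1}$ of non-negative integers with $\sum_{l,h}l\,h\,k_{l,h}=n$, each contributing
$$\prod_{l,h\geq 1}\frac{E(\mathcal{X}_{\Gamma}^{irr}GL_{l};u^{h},v^{h})^{k_{l,h}}}{k_{l,h}!\,h^{k_{l,h}}}.$$
Since $l h k_{l,h}\leq n$ whenever $k_{l,h}>0$ forces $l,h\leq n$, the product can be truncated to $1\leq l,h\leq n$ and each family involves only finitely many nonzero entries. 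By Definition~\ref{def:rectangular-partition}, the indexing set of these families is exactly $\mathcal{RP}_{n}$: each array $[[k]]=(k_{l,h})$ encodes an assembly of $n$ out of rectangles of dimensions $l\times h$ with multiplicities $k_{l,h}$.

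The main obstacle is merely bookkeeping, namely matching the precise convention for $\pexp$ (in particular the Adams-type substitution $u\mapsto u^{h}$, $v\mapsto v^{h}$, $t\mapsto t^{h}$) with the one adopted in Section~4, and verifying that the set of arrays $(k_{l,h})$ with $\sum lh\,k_{l,h}=n$ coincides with $\mathcal{RP}_{n}$ as defined there. Once these conventions are aligned, no further geometric input is needed: the corollary is a direct consequence of Theorem~\ref{thm:main}.
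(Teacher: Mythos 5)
Your proposal is correct and follows essentially the same route as the paper: the paper isolates your computation as Theorem~\ref{thm:partition-formula}, expanding $\exp(\Psi(\sum_l B_l t^l))$ as a product of exponentials and collecting the coefficient of $t^n$ to land on rectangular partitions. The only cosmetic difference is that the paper first passes through an intermediate expression indexed by ordinary partitions (via the quantities $C_j(x)=\sum_{d|j}b_d(x^{j/d})/(j/d)$) before refining to the double-indexed product over pairs $(l,h)$, whereas you go to the product over $(l,h)$ directly.
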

As a first application of these results, in Section 5 we write the
$E$-polynomial of the abelian stratum $\mathcal{X}_{\Gamma}^{[1^{n}]}GL_{n}\subset\mathcal{X}_{\Gamma}GL_{n}$
in terms of usual partitions, generalizing a result in \cite{FS};
we also apply the same methods to write the $E$-polynomial of the
so-called Cartan brane on the moduli space of rank $n$ and degree
zero Higgs bundles, an algebraic variety which is generally not of
polynomial type.

Theorem \ref{thm:main} and Corollary \ref{cor:main} work both ways
so that, by knowing all polynomials $E(\mathcal{X}_{\Gamma}GL_{m};u,v)$
for all $m\leq n$, we are able to determine $E(\mathcal{X}_{\Gamma}^{irr}GL_{n};u,v)$.
This is explored in the last subsection, where by using previous computations
of $E$-polynomials of $\mathcal{X}_{\Gamma}GL_{n}$, for $n=2$ and
$3$, and for groups $\Gamma$ other than the free group (mainly using
\cite{BH}), we determine $E$-polynomials of some \emph{irreducible
character varieties} that have not been calculated before: when $\Gamma$
is the fundamental group of a compact surface (in both the orientable,
and non-orientable cases) and when $\Gamma$ is a torus knot group.
From these formules, we readily obtain new results for these groups
$\Gamma$: the number of irreducible components of $\mathcal{X}_{\Gamma}^{irr}GL_{n}$
and their Euler characteristics.

\subsection*{Acknowledgements }

We would like to thank A. González-Prieto, E. Franco, S. Lawton, M.
Logares, J. Mart\'{i}nez, S. Mozgovoy, V. Muñoz, A. Oliveira, F. Rodriguez-Villedgas,
J. Silva and M. Tierz for several interesting and very useful conversations
on topics around mixed Hodge structures and $E$-polynomials. We also
thank the organizers of the VII Iberoamerican Congress on Geometry,
Valladolid (2018) and of the Special Session on Geometry of Representation
Spaces in the Joint AMS/MMA Meeting (2019), where preliminary versions
of these results were presented.

\section{Mixed Hodge structures and $E$-polynomials}

Let $X$ be a quasi-projective variety over $\mathbb{C}$ (possibly
singular, not complete, and/or not irreducible). Denote by $H_{c}^{k}(X):=H_{c}^{k}(X,\mathbb{C})$
its degree $k$ (singular) complex cohomology group, with compact
support, for $k\in\{0,\cdots,2d\}$, where $d$ is the complex dimension
of $X$. Deligne defined natural and functorial mixed Hodge structures
on the $H_{c}^{k}(X)$, which are subtle algebraic invariants of $X$
(c.f. \cite{De}). For the general theory of mixed Hodge structures
on cohomology groups and its properties, see \cite{De} and \cite{PS}.
Here, we review their most important features for our purposes, and
introduce the notation.

\subsection{Mixed Hodge polynomials}

Numerically, mixed Hodge structures on $X$ can be codified via the
so-called mixed Hodge numbers 
\[
h^{k,p,q}(X)=\dim_{\mathbb{C}}H_{c}^{k,p,q}(X)\in\mathbb{N}_{0},
\]
where $p,q\in\{0,\cdots,k\}$. We say that $(p,q)$ are $k$-weights
of $X$, when $h^{k,p,q}\neq0$.

In general, mixed Hodge numbers verify $h^{k,p,q}=h^{k,q,p}$, and
$\dim_{\mathbb{C}}H_{c}^{k}(X)=\sum_{p,q}h^{k,p,q}$, so they provide
the (compactly supported) Betti numbers (and the usual Betti numbers,
in the smooth case, by Poincaré duality). For some interesting classes
of spaces, the above sum reduces to a one-variable sum. For example,
when $X$ is a compact Kähler manifold, the Hodge structure is called
\emph{pure}, which means that for each $k$, the only $k$-weights
are of the form $(p,k-p)$ with $p\in\{0,\cdots,k\}$. Another such
case, relevant for the present article, is when $X$ is of \emph{Hodge-Tate
type} (also called \emph{balanced} \emph{type}), for which all the
$k$-weights are of the form $(p,p)$ with $p\in\{0,\cdots,k\}$.

We can assemble all the $h^{k,p,q}(X)$ in the mixed Hodge polynomial
\begin{equation}
\mu(X;\,t,u,v):=\sum_{k,p,q\geq0}h^{k,p,q}(X)\ t^{k}u^{p}v^{q}\in\mathbb{\mathbb{N}}_{0}[t,u,v],\label{eq:mu}
\end{equation}
of three variables. The mixed Hodge polynomial specializes to the
(compactly supported) Poincaré polynomial by setting $u=v=1$, $P_{t}^{c}(X)=\mu(X;\,t,1,1).$
Again, this gives the usual Poincaré polynomial in the smooth situation.

\subsection{The $E$-polynomial}

Mixed Hodge polynomials are generally difficult to compute. However,
by substituting $t=-1$ we obtain a certain Euler characteristic version,
which is easier to compute due to its multiplicative and additive
properties. We define the \emph{$E$-polynomial} of $X$ by 
\begin{equation}
E(X;\,u,v)=\sum_{k,p,q}(-1)^{k}h^{k,p,q}(X)\ u^{p}v^{q}\in\mathbb{Z}[u,v],\label{eq:h_c}
\end{equation}
which is also called the $E$-polynomial. Observe that 
\[
\chi^{c}(X)=E(X;\,1,1)=\mu(X;\,-1,1,1)
\]
is the (compactly supported) Euler characteristic of $X$.

The Künneth theorem is valid for mixed Hodge structures (see \cite{PS})
and so, $\mu$ verifies a multiplicative property with respect to
Cartesian products: 
\[
\mu(X\times Y)=\mu(X)\mu(Y),
\]
and induces analogous statements for $P^{c}$ and $E$ (we write simply
$\mu(X)$, $P^{c}(X)$, $E(X)$ etc, in formulae where the variables
of the polynomials are not relevant).

The big computational advantage of $E(X)$, as compared to $\mu(X)$
or $P^{c}(X)$ is that it satisfies both an \emph{additive property
with respect to stratifications} by locally closed (in the Zariski
topology) strata and a \emph{multiplicative property for fibrations}
in at least three important situations that we summarize in the following
statement. 
\begin{prop}
\label{prop:Properties-of-E} \cite{DL,LMN} If the quasi-projective
variety $X$ has a closed subvariety $Z\subset X$ (so that $X=Z\sqcup(X\setminus Z)$
is a stratification of $X$ by locally closed subvarieties), then
\[
E(X)=E(Z)+E(X\setminus Z).
\]
Also, if $X$ is the total space of an algebraic fibration of quasi-projective
varieties 
\[
F\to X\to B,
\]
and either:\\
 (i) it is locally trivial in the Zariski topology of $B$, or\\
 (ii) $F$, $X$ and $B$ are smooth, the fibration is locally trivial
in the complex analytic topology, and $\pi_{1}(B)$ acts trivially
on $H_{c}^{*}(F)$, or\\
 (iii) $X$, $B$ are smooth and $F$ is a complex connected Lie group.\\
 Then 
\[
E(X)=E(F)\cdot E(B).
\]
\end{prop}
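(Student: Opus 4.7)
My plan is to deduce all three parts from two standard but non-trivial inputs from mixed Hodge theory: the existence of long exact sequences of mixed Hodge structures (MHS) for pairs $(X,Z)$ with $Z\subset X$ closed, and the existence of a Leray-type spectral sequence of MHS for suitable smooth morphisms. The additive property bootstraps the rest.

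For \emph{additivity}, Deligne's construction supplies a long exact sequence
$$\cdots\to H_c^k(X\setminus Z)\to H_c^k(X)\to H_c^k(Z)\to H_c^{k+1}(X\setminus Z)\to\cdots$$
of mixed Hodge structures. Passing to the $(p,q)$-graded pieces yields exact sequences of $\CC$-vector spaces, so for each fixed $(p,q)$ the alternating sum of dimensions over $k$ vanishes. Repackaged as a polynomial in $u,v$ this gives $E(X)=E(Z)+E(X\setminus Z)$.

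For case \emph{(i)}, I would proceed by Noetherian induction on $\dim B$: choose a nonempty Zariski open $V\subset B$ over which the fibration trivializes as $\pi^{-1}(V)\cong F\times V$, apply the K\"unneth theorem (which respects MHS) to get $E(\pi^{-1}(V))=E(F)\,E(V)$, apply the inductive hypothesis to the pulled-back fibration over the closed complement $B\setminus V$, and combine via additivity on both $X$ and $B$. For case \emph{(ii)}, I would invoke the Leray spectral sequence for compactly supported cohomology,
$$E_2^{p,q}=H_c^p(B,R^q\pi_!\,\CC)\Longrightarrow H_c^{p+q}(X),$$
which, under the smoothness assumptions, upgrades to a spectral sequence of MHS via Saito's theory of mixed Hodge modules. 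The trivial monodromy hypothesis identifies $R^q\pi_!\,\CC$ with the constant local system $H_c^q(F)\otimes\CC_B$, so $E_2^{p,q}\cong H_c^p(B)\otimes H_c^q(F)$ as MHS; since alternating sums of $h^{k,p,q}$ are invariant under spectral sequences with MHS-compatible differentials, we obtain $E(X)=E(B)\,E(F)$. For case \emph{(iii)}, I would reduce to (ii) by arguing that the cohomology of a connected complex Lie group is rigid enough that $\pi_1(B)$ must act trivially: the rational cohomology $H^*(F,\QQ)$ is a Hopf algebra (indeed, via deformation retraction onto a maximal compact, an exterior algebra on odd-degree primitive generators) whose grade-preserving Hopf-automorphism group is discrete, so no continuous loop of monodromies can produce a non-trivial action.

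The main technical obstacle is the MHS refinement of the Leray spectral sequence used in (ii): this is the step that genuinely uses smoothness and analytic local triviality, and is the deep input from Hodge theory (Saito's mixed Hodge modules, or more classical arguments in special cases) on which both multiplicative cases ultimately rest.
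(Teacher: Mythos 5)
Your treatment of additivity (via the long exact sequence of mixed Hodge structures for the pair $(X,Z)$ and strictness), of case (i) (Noetherian induction plus K\"unneth plus additivity), and of case (ii) (a compactly supported Leray spectral sequence of MHS, with the Euler-characteristic argument in each $(p,q)$-graded piece) is sound and standard. Note that the paper itself does not reprove any of this: it simply cites Dimca--Lehrer and \cite{LMN,FS}, adding only the remark that the one-variable weight polynomial of \cite{DL} upgrades to the two-variable $E$-polynomial and that (iii) reduces to (ii). So for parts 1, (i) and (ii) you are supplying the argument the paper delegates to its references, and doing so correctly.

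There is, however, a genuine gap in your reduction of (iii) to (ii). You claim that the graded Hopf-algebra automorphism group of $H^{*}(F,\QQ)\cong\Lambda(x_{1},\dots,x_{r})$ is discrete; it is not. The automorphisms preserve the graded space of primitives and are exactly $\prod_{d}GL(P^{d})$, which is positive-dimensional whenever $F$ has nontrivial cohomology (already for $F=\CC^{*}$ one gets $\QQ^{*}$ acting on $H^{1}$). Moreover, even if the target were discrete, that would not constrain the monodromy representation, which is a homomorphism from the \emph{discrete} group $\pi_{1}(B)$; there is no ``continuous loop of monodromies'' to rigidify. The issue is not cosmetic: a fibration with fiber $\CC^{*}$ and monodromy $z\mapsto z^{-1}$ (e.g.\ the quotient of $\CC^{*}\times\CC^{*}$ by $(w,z)\sim(-w,z^{-1})$, fibered over $\CC^{*}$ via $w\mapsto w^{2}$) has $E(X)=u^{2}v^{2}-uv\neq(uv-1)^{2}=E(F)E(B)$, so some hypothesis beyond ``$F$ is a connected Lie group'' must enter. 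The correct input, implicit in \cite{DL} and in all the applications here, is that the fibration in (iii) is a principal $F$-bundle (equivalently, its structure group is the connected group $F$ acting by translations): then each monodromy transformation is realized by translation by an element of $F$, which is isotopic to the identity since $F$ is connected, so the action on $H_{c}^{*}(F)$ is trivial and (ii) applies. You should replace your Hopf-algebra rigidity argument by this connected-structure-group argument.
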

\begin{proof}
The additive property is well known and can be found in \cite{DL}
or in the book \cite{PS}. The multiplicative property presented here
is a slight reformulation (in the non-equivariant case) of the one
in Dimca-Lehrer \cite[Thm. 6.1]{DL} (and \cite[Remarks 6.2]{DL}),
and also appears in \cite[Prop. 1.9]{LMN}; a more detailed proof
has been recently presented in \cite{FS}, so we refer to those proofs,
adding only a couple of comments that may serve to deduce the present
statement.

The weight polynomial used by Dimca-Lehrer is equivalent to the $E$-polynomial
in the case of Hodge-Tate type varieties, using the substitution $t^{2}=uv$.
So, this statement is a generalization of \cite[Thm. 6.1]{DL} to
the 2 variable $E$-polynomial. Note also that the case (iii) actually
follows from (ii) since the action of $\pi_{1}(B)$ on the cohomology
of a connected Lie group $F$ is always trivial.\end{proof}
\begin{example}
\label{exa:E}(1) Let $n\in\mathbb{N}_{0}$. Simple calculations give
\[
\mu(\mathbb{C}^{n})=t^{2n}u^{n}v^{n},\quad\mu(\mathbb{C}^{*};\,t,u,v)=t^{2}uv+t.
\]
This implies that $E(\mathbb{C}^{n})=(uv)^{n}$ and $E(\mathbb{C}^{*})=uv-1$,
a result compatible with the locally closed decomposition $\mathbb{C}=\mathbb{C}^{*}\sqcup\{0\}$.
Note the absense of additivity for $\mu$.\\
 (2) The group $GL_{n}\mathbb{C}$ can be given as the fibration of
smooth varieties, 
\[
SL_{n}\mathbb{C}\to GL_{n}\mathbb{C}\to\mathbb{C}^{*},
\]
whose projection map is the determinant. This is not locally trivial
in the Zariski topology, but it is so in the analytic topology, and
the fact that the complex Lie group $SL_{n}\mathbb{C}$ is connected
implies that $\pi_{1}(\mathbb{C}^{*})$ acts trivially on the cohomology
of $SL_{n}\mathbb{C}$. Then, the Proposition~\ref{prop:Properties-of-E}
implies: 
\[
E(GL_{n}\mathbb{C})=E(SL_{n}\mathbb{C})(uv-1).
\]
Note that all the groups in the fibration are of Hodge-Tate type.\footnote{In fact, every complex algebraic reductive group $G$ is of Hodge-Tate
type (see, eg. \cite{DL} and \cite{Jo}).} 
\end{example}
In this article, if the $E$-polynomial of an algebraic variety $X$
depends only on the product $uv$ (for example, when $X$ is of Hodge-Tate
type, such as the cases in Example \ref{exa:E}), we write $x=uv$
and use the notation: 
\[
E_{x}(X):=E(X;\,\sqrt{x},\sqrt{x})\in\mathbb{Z}[x].
\]
For example, since $E(\mathbb{C}^{*};\,u,v)=uv-1$ we write $E_{x}(\mathbb{C}^{*})=x-1$.
Then $E_{x}((\mathbb{C}^{*})^{l})=(x-1)^{l}$, by the product formula.

%\textbf{Notation:} For simplicity, whenever $E(X)$ is written (without%explicit variable) we will always assume that $X$ is a variety of%Hodge-Tate type, and that the variable is $x=uv$, that is $E(X)=E_{x}(X)$.

\section{Affine GIT and Character Varieties}

In this section we recall some aspects of Geometric Invariant Theory
(GIT) and of character varieties of finitely presented groups.

\subsection{Affine GIT}

Consider an affine algebraic variety $X$ over $\mathbb{C}$, and
an affine algebraic reductive $\mathbb{C}$-group $G$. Given an algebraic
action of $G$ on $X$, we have an induced action of $G$ on the ring
$\mathbb{C}[X]$ of regular functions on $X$ and we can define the
(affine) GIT quotient by 
\[
X\quot G:=Spec\left(\mathbb{C}[X]^{G}\right),
\]
where $\mathbb{C}[X]^{G}$ denotes the subring of $G$-invariants
in $\mathbb{C}[X]$. In many situations this quotient differs from
the usual orbit quotient, since this one identifies $G$-orbits whose
closures intersect. Nevertheless, with the notion of stability we
can sometimes recover good properties of the GIT quotient. Let $G_{x}\subset G$
denote the stabilizer of a point $x\in X$ and let us call the subgroup
$G_{X}:=\cap_{x\in X}G_{x}$ the \emph{center of the action}, since
it acts trivially, and $G/G_{X}$ acts effectively on $X$. Denote
by $\psi_{x}$ be the (effective) orbit map through $x$: 
\begin{eqnarray*}
\psi_{x}:G/G_{X} & \to & X\\
g & \mapsto & g\cdot x.
\end{eqnarray*}

\begin{defn}
\label{def:polystable}In the situation above, we say that $x\in X$
is \emph{polystable} if the orbit $G\cdot x$ is closed in $X$. We
say that $x\in X$ is \emph{stable} if it is polystable and $\psi_{x}$
is a proper map.\end{defn}
\begin{rem}
\label{rem:stability}This definition of stability differs from that
of \cite{MJK}, being equivalent to the more common notion when $G_{X}$
is finite (see \cite{Ki,CF}). The above definition is more convenient
in this article (as was the case in \cite{Ki}) since, for character
varieties, $G_{X}$ always contains the center of $G$ (see below). 
\end{rem}
By standard GIT results, one can show that the stable locus $X^{s}\subset X$
is a Zariski open (hence dense, when non-empty) set and one gets a
better quotient for the stable locus. We say that a morphism 
\[
f:X\to Y
\]
is a \emph{geometric quotient} if $f$ is $G$-invariant, induces
the quotient topology on $Y$, and it is a bijection $Y=X/G$ which
preserves rings of functions in the sense that $\mathbb{C}[f^{-1}(U)]^{G}=\mathbb{C}[U],$
for every $U\subset Y$ open. The following shows that the stable
quotient is geometric. 
\begin{prop}
\label{prop:geo-quotient}The restriction $X^{s}\to X^{s}/G$ of the
affine quotient map $\Phi:X\to X\quot G$ is a geometric quotient.
Moreover, $\Phi(X^{s})$ is Zariski open in $X\quot G$.\end{prop}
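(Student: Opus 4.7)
The proof plan reduces to a single key step: showing that $X^{s}$ is $\Phi$-\emph{saturated}, in the sense that $X^{s}=\Phi^{-1}(\Phi(X^{s}))$. Both conclusions of the proposition will then follow from standard affine GIT facts, namely that $\Phi$ sends $G$-invariant closed subsets of $X$ to closed subsets of $X\quot G$, and that two points of $X$ have the same image under $\Phi$ if and only if their orbit closures meet.

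To prove saturation, I would suppose $x\in X^{s}$ and $\Phi(y)=\Phi(x)$. Then $\overline{G\cdot x}\cap\overline{G\cdot y}\neq\emptyset$, and since $G\cdot x$ is closed by polystability, this forces $G\cdot x\subseteq\overline{G\cdot y}$. If $G\cdot x$ and $G\cdot y$ were distinct orbits they would be disjoint, so $G\cdot x$ would sit inside $\overline{G\cdot y}\setminus G\cdot y$, and the standard fact that an orbit is open in its closure would give $\dim G\cdot x<\dim G\cdot y$. On the other hand, properness of $\psi_{x}:G/G_{X}\to X$ makes the fibre $G_{x}/G_{X}$ of $\psi_{x}$ finite, so $\dim G\cdot x=\dim G-\dim G_{X}$; since every point stabiliser contains $G_{X}$, this is the maximum possible dimension of any $G$-orbit in $X$, contradicting the previous inequality. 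Hence $G\cdot y=G\cdot x$, and in particular $y\in G\cdot x\subseteq X^{s}$ by $G$-invariance of $X^{s}$.

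From saturation the two claims drop out quickly. For openness of $\Phi(X^{s})$, the affine GIT quotient sends the $G$-invariant closed set $X\setminus X^{s}$ to a closed subset of $X\quot G$ whose complement is $\Phi(X^{s})$, again by saturation. For the geometric quotient property, saturation and the orbit-closure criterion identify the fibres of $\Phi|_{X^{s}}$ with single $G$-orbits; the quotient topology on $\Phi(X^{s})$ agrees with the subspace topology from $X\quot G$ because saturated open subsets of $X^{s}$ are automatically saturated open subsets of $X$ (combining saturation of $X^{s}$ with the fibre description just obtained); and the functional identity $\mathbb{C}[\Phi^{-1}(V)]^{G}=\mathbb{C}[V]$ for open $V\subseteq\Phi(X^{s})$ is inherited from the defining identity of the categorical quotient $\Phi:X\to X\quot G$ by localising at principal open subsets of $X\quot G$ exhausting $\Phi(X^{s})$.

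The main obstacle I anticipate is the dimension comparison in the saturation step. Definition \ref{def:polystable} is tailored to accommodate a potentially positive-dimensional centre of the action $G_{X}$, and one must check that properness of $\psi_{x}$, together with the fact that every point stabiliser contains $G_{X}$, genuinely forces $\dim G\cdot x$ to be the \emph{global} maximum of orbit dimensions in $X$, rather than merely the maximum among polystable orbits. Remark \ref{rem:stability} signals that the setup is engineered precisely so that $G/G_{X}$ acts effectively on $X$, which is the property quietly doing the work.
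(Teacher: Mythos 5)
Your argument is correct, but note that the paper does not actually prove this proposition: its ``proof'' is a one-line citation to Mukai's book, so any comparison is really with the standard GIT argument rather than with something in the text. Your route is exactly that standard argument --- establish that $X^{s}$ is $\Phi$-saturated, then use that $\Phi$ separates disjoint closed $G$-invariant sets to get openness of $\Phi(X^{s})$ and the geometric-quotient properties --- and the details check out: since $G_{X}=\bigcap_{x}G_{x}$ is normal in $G$, the fibre $G_{x}/G_{X}$ of the proper map $\psi_{x}$ is a complete \emph{linear} algebraic group, hence finite, so $\dim G\cdot x=\dim G-\dim G_{X}$ is indeed the global maximum of orbit dimensions, and the boundary-orbit dimension drop then rules out a second orbit in the fibre of $\Phi$ over $\Phi(x)$. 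The genuine added value of your write-up is precisely this last point: the paper's Definition of stability (properness of $\psi_{x}:G/G_{X}\to X$, to accommodate a positive-dimensional centre of the action) is not the one under which the textbook statement is usually proved, and you correctly identify and discharge the extra verification needed to reduce to it. Two small things you implicitly use and should flag as inputs rather than consequences: the Zariski-openness of $X^{s}$ in $X$ (asserted by the paper just before the proposition, and needed so that $X\setminus X^{s}$ is closed and so that saturated opens of $X^{s}$ are open in $X$), and the surjectivity of $\Phi$ together with the fact that $\Phi$ maps closed $G$-invariant sets to closed sets, which is where reductivity of $G$ enters.
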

\begin{proof}
See \cite[Chap. 5]{Mu}. 
\end{proof}

\subsection{Character varieties}

Let $G$ be as before, and let $\Gamma$ be a finitely presented group.
Denote by 
\[
\mathcal{R}_{\Gamma}G=\hom(\Gamma,G)
\]
the algebraic variety of representations of $\Gamma$ in $G$. An
element $\rho\in\mathcal{R}_{\Gamma}G$ is defined by $\rho(\gamma)$,
for $\gamma$ in a generating set for $\Gamma$, and the elements
$\rho(\gamma)\in G$, satisfy the algebraic relations of $\Gamma$.
Consider also the algebraic action of $G$ on $\mathcal{R}_{\Gamma}G$
by conjugation of representations. The corresponding GIT quotient
is the $G$-character variety of $\Gamma$: 
\[
\mathcal{X}_{\Gamma}G:=\hom(\Gamma,G)\quot G,
\]
sometimes also called the moduli space of representations of $\Gamma$
into $G$.

We will need to work also with an alternative description of this
quotient, by using polystable representations which, according to
Definition \ref{def:polystable}, are representations $\rho\in\hom(\Gamma,G)=\mathcal{R}_{\Gamma}G$
whose orbits $G\cdot\rho:=\{g\rho g^{-1}:\ g\in G\}$ are (Zariski)
closed. The subset of polystable representations in $\mathcal{R}_{\Gamma}G$
is denoted by $\mathcal{R}_{\Gamma}^{ps}G$, and it can be shown that
$\mathcal{R}_{\Gamma}^{ps}G\subset\mathcal{R}_{\Gamma}G$ is a Zariski
locally-closed subvariety (containing the stable locus $\mathcal{R}_{\Gamma}^{s}G\subset\mathcal{R}_{\Gamma}G$,
but neither open nor closed in general). It can also be shown that
a representation $\rho:\Gamma\to G$ is polystable if and only if
it is completely reducible. This means that if $\rho(\Gamma)$ is
contained in some proper parabolic $P\subset G$, then it is actually
contained in a Levi subgroup of $P$ (see \cite{Si}). 
\begin{prop}
\cite{FL1}\label{prop:character-polystable representation} There
is a bijective correspondence: 
\[
\mathcal{X}_{\Gamma}G=\mathcal{R}_{\Gamma}G\quot G\cong\mathcal{R}_{\Gamma}^{ps}G/G,
\]
where the right hand side is called the polystable quotient. 
\end{prop}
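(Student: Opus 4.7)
The plan is to reduce the statement to a general fact of affine GIT for reductive group actions: the points of $X\quot G$ are in natural bijection with the closed $G$-orbits in $X$. Specialized to $X=\mathcal{R}_{\Gamma}G$ with the conjugation action, this is exactly the claim, provided one identifies polystability (closed orbits) with the subset $\mathcal{R}_{\Gamma}^{ps}G$ as defined.

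First, I would recall the categorical quotient description: the affine quotient map $\Phi:\mathcal{R}_{\Gamma}G\to\mathcal{R}_{\Gamma}G\quot G=\Spec(\mathbb{C}[\mathcal{R}_{\Gamma}G]^{G})$ is constant on orbits and, since $G$ is reductive, two orbits $G\cdot\rho$ and $G\cdot\rho'$ have the same image under $\Phi$ if and only if their Zariski closures meet, $\overline{G\cdot\rho}\cap\overline{G\cdot\rho'}\neq\emptyset$. This is the standard consequence of the fact that for a reductive group acting on an affine variety, invariant regular functions separate disjoint closed invariant subsets (Mumford, \cite{Mu}, Chap.~1).

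Next, the key input is that in every orbit closure $\overline{G\cdot\rho}$ there is a unique closed $G$-orbit. Existence follows from a dimension/Noetherian argument: any minimal nonempty closed $G$-invariant subset of $\overline{G\cdot\rho}$ is a single orbit, hence closed. Uniqueness follows from the separation property above: two distinct closed orbits in $\overline{G\cdot\rho}$ would have to be separated by an invariant function, contradicting both lying in the same fiber of $\Phi$. This gives a well-defined map
\[
\pi:\mathcal{R}_{\Gamma}G\quot G\longrightarrow\mathcal{R}_{\Gamma}^{ps}G/G,\qquad \Phi(\rho)\mapsto\text{the unique closed orbit in }\overline{G\cdot\rho},
\]
and it is inverse to the map $\mathcal{R}_{\Gamma}^{ps}G/G\to\mathcal{R}_{\Gamma}G\quot G$ induced by the inclusion $\mathcal{R}_{\Gamma}^{ps}G\hookrightarrow\mathcal{R}_{\Gamma}G$ followed by $\Phi$: this latter map is surjective because every fiber of $\Phi$ contains a unique closed orbit (by the above), and injective because distinct closed orbits map to distinct points of the GIT quotient (again by separation of invariants).

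Finally, one must verify that the "polystable" locus in the sense of Definition~\ref{def:polystable} (closed orbit) coincides with the subvariety $\mathcal{R}_{\Gamma}^{ps}G$ defined via complete reducibility. For $G=GL_{n}$ or more generally a reductive group, a representation $\rho:\Gamma\to G$ has a closed conjugation orbit if and only if $\rho$ is completely reducible; this is a classical theorem (for example by a Hilbert-Mumford criterion argument together with the parabolic/Levi description of orbit degenerations, c.f.~\cite{Si}).

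The main obstacle is really just this existence-and-uniqueness of a closed orbit in each orbit closure, which is the heart of affine GIT; everything else is bookkeeping. Once this is in place, the bijection is formal. I would simply cite \cite{Mu}, \cite{Si}, and the reference \cite{FL1} given in the statement for the detailed verification in the character variety setting.
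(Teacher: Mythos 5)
The paper offers no proof of this proposition, citing only \cite{FL1}; your argument is the standard affine GIT one (existence and uniqueness of a closed orbit in each fiber of $\Phi$, plus separation of disjoint closed invariant sets by invariants) and is correct. One small remark: your final step is not actually needed, because the paper defines $\mathcal{R}_{\Gamma}^{ps}G$ directly as the locus of representations with closed orbits (Definition~\ref{def:polystable}), with complete reducibility mentioned only as an equivalent characterization, so no comparison between the two notions is required for the bijection itself.
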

We also need the notion of irreducible representations, and consider
the character varieties consisting of these ``nicer'' representations.
For a given $\rho\in\mathcal{R}_{\Gamma}^{ps}G$, denote by $Z_{\rho}:=G_{\rho}$
the centralizer of $\rho(\Gamma)$ inside $G$ (coincides with the
stabilizer of $\rho$). For character varieties, $Z_{\rho}$ always
contains $ZG$, the center of $G$. Hence, $ZG$ is always contained
in the center of the action (justifying our definition of stability,
when $\dim ZG>0$). 
\begin{defn}
Let $\rho\in\mathcal{R}_{\Gamma}^{ps}G$. We say that $\rho$ is \emph{irreducible}
if $Z_{\rho}$ is a finite extension of $ZG$.\end{defn}
\begin{rem}
(1) This definition is equivalent to the usual definition, involving
parabolic subgroups: indeed, $\rho\in\hom(\Gamma,G)$ is irreducible
if and only if it is polystable and its image is not contained in
a proper parabolic subgroup of $G$ (see \cite{CF,Si}). \\
 (2) For character varieties, irreducibility is equivalent to stability
in the sense of Definition \ref{def:polystable} (see \cite[Prop. 5.11 (iii)]{CF}).
So, the subset of irreducible representations, denoted $\mathcal{R}_{\Gamma}^{irr}G\subset\mathcal{R}_{\Gamma}^{ps}G$,
equals the stable locus, and being a Zariski open subset of $\mathcal{R}_{\Gamma}G$,
is a quasi-projective variety. 
\end{rem}
Since irreducibility is well defined on $G$-orbits, we define the\emph{
$G$-irreducible character variety of $\Gamma$} as 
\[
\mathcal{X}_{\Gamma}^{irr}G:=\mathcal{R}_{\Gamma}^{irr}G/G
\]
which is a geometric quotient. Hence, $\mathcal{X}_{\Gamma}^{irr}G$
is a Zariski open subvariety of $\mathcal{X}_{\Gamma}G$, by Proposition
\ref{prop:geo-quotient}.

\subsection{Stratification by stabilizer dimension}

Let $G_{\mathcal{R}}:=G_{\mathcal{R}_{\Gamma}G}\subset G$ be the
center of the action of $G$ on $\mathcal{R}_{\Gamma}G$, that is:
\[
G_{\mathcal{R}}:=\bigcap_{\rho\in\mathcal{R}_{\Gamma}G}Z_{\rho},
\]
where $Z_{\rho}$ is the stabilizer of $\rho$. Then, $\dim Z_{\rho}\geq\dim G_{\mathcal{R}}\geq\dim ZG$,
for all $\rho\in\mathcal{R}_{\Gamma}G$. 
\begin{prop}
\label{prop:locally-closed-general-G}Let $m_{0}:=\dim G_{\mathcal{R}}\in\mathbb{N}_{0}$.
Then, the character variety $\mathcal{X}_{\Gamma}G$ can be written
as a union of locally closed quasi-projective varieties, 
\[
\mathcal{X}_{\Gamma}G=\bigsqcup_{m\geq m_{0}}\mathcal{X}_{\Gamma}^{m}G,
\]
where $\mathcal{X}_{\Gamma}^{m}G$ consists of equivalence classes
of polystable representations $\rho$ with $\dim Z_{\rho}=m$. Moreover,
$\mathcal{X}_{\Gamma}^{m_{0}}G$ is precisely the open and dense stable
locus $\mathcal{X}_{\Gamma}^{s}G=\Phi(\mathcal{R}_{\Gamma}^{s}G)$
as in Proposition \ref{prop:geo-quotient}.\end{prop}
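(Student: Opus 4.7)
The plan is to combine three ingredients: upper semicontinuity of stabilizer dimension on the representation variety, the fact that the affine GIT quotient map sends closed $G$-invariant subsets to closed subsets, and the polystable description of $\mathcal{X}_{\Gamma}G$ from Proposition \ref{prop:character-polystable representation}.

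First, I would use Proposition \ref{prop:character-polystable representation} to represent every class in $\mathcal{X}_{\Gamma}G$ by a polystable $\rho \in \mathcal{R}_{\Gamma}^{ps}G$, unique up to $G$-conjugation. Since stabilizers along a single $G$-orbit are conjugate, the function $[\rho]\mapsto \dim Z_{\rho}$ is well defined on $\mathcal{X}_{\Gamma}G$; the inclusion $G_{\mathcal{R}}\subseteq Z_{\rho}$ gives the lower bound $\dim Z_{\rho}\geq m_{0}$, so the set-theoretic decomposition $\mathcal{X}_{\Gamma}G=\bigsqcup_{m\geq m_{0}}\mathcal{X}_{\Gamma}^{m}G$ is immediate.

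Next, for the local closedness, I would apply the classical upper semicontinuity of the dimension of stabilizers for algebraic $G$-actions to conclude that $\mathcal{R}_{\Gamma}^{\geq m}G:=\{\rho\in\mathcal{R}_{\Gamma}G:\dim Z_{\rho}\geq m\}$ is a closed, $G$-invariant subvariety of $\mathcal{R}_{\Gamma}G$. Since $\Phi:\mathcal{R}_{\Gamma}G\to\mathcal{X}_{\Gamma}G$ is a good affine quotient, it sends closed $G$-invariant subsets to closed subsets, so $\mathcal{X}_{\Gamma}^{\geq m}G:=\Phi(\mathcal{R}_{\Gamma}^{\geq m}G)$ is Zariski closed in $\mathcal{X}_{\Gamma}G$. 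Here one must check that $\Phi(\mathcal{R}_{\Gamma}^{\geq m}G)$ coincides with the set of classes $[\rho]$ with $\dim Z_{\rho^{ps}}\geq m$: if $\rho$ is not polystable, its orbit degenerates to the unique closed orbit of its polystable representative $\rho^{ps}$, and upper semicontinuity of $\dim Z_{(\cdot)}$ under such a degeneration forces $\dim Z_{\rho^{ps}}\geq \dim Z_{\rho}$, so the membership condition is preserved. It follows that $\mathcal{X}_{\Gamma}^{m}G=\mathcal{X}_{\Gamma}^{\geq m}G\setminus\mathcal{X}_{\Gamma}^{\geq m+1}G$ is locally closed, and quasi-projective as a locally closed subvariety of the (quasi-projective) affine variety $\mathcal{X}_{\Gamma}G$.

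Finally, to identify $\mathcal{X}_{\Gamma}^{m_{0}}G$ with the stable locus, I would argue that for polystable $\rho$, the condition $\dim Z_{\rho}=m_{0}$ is equivalent to $Z_{\rho}/G_{\mathcal{R}}$ being finite (as an algebraic group has only finitely many components of each dimension). In that case the orbit map $\psi_{\rho}:G/G_{\mathcal{R}}\to\mathcal{R}_{\Gamma}G$ factors as a finite étale cover $G/G_{\mathcal{R}}\to G/Z_{\rho}$ composed with the immersion onto the closed orbit $G\cdot\rho$, hence is proper, so $\rho$ is stable in the sense of Definition \ref{def:polystable}; conversely, properness of $\psi_{\rho}$ forces the fibres $Z_{\rho}/G_{\mathcal{R}}$ to be proper subgroups of $G/G_{\mathcal{R}}$ and hence finite. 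Openness then follows from Proposition \ref{prop:geo-quotient}, and density is the statement that $\mathcal{X}_{\Gamma}^{\geq m_{0}+1}G$ is a proper closed subvariety whenever $\mathcal{X}_{\Gamma}^{s}G$ is non-empty. The main subtlety I anticipate is the verification that $\Phi(\mathcal{R}_{\Gamma}^{\geq m}G)$ captures the stratification by $\dim Z_{\rho^{ps}}$ rather than by $\dim Z_{\rho}$ of arbitrary representatives; this is precisely where passing to polystable representatives and invoking semicontinuity at the closed orbit is essential.
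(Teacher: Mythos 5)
Your argument is correct, but it establishes local closedness by a genuinely different mechanism than the paper. The paper works entirely inside $\mathcal{R}_{\Gamma}^{ps}G$ and peels strata off iteratively: the locus of minimal stabilizer dimension is identified with the stable locus, hence is Zariski open by Proposition \ref{prop:geo-quotient}; its complement is closed and $G$-invariant, and the same argument is repeated on that closed subvariety with the next smallest stabilizer dimension, so each stratum is exhibited as an open subset of a closed set. Your route instead shows that $\mathcal{R}_{\Gamma}^{\geq m}G$ is closed and invariant by upper semicontinuity of $\dim Z_{\rho}$, pushes it down to a closed subset of the quotient using the closed-image property of good quotients, and then checks, via semicontinuity along the degeneration of an orbit to the closed orbit in its closure, that $\Phi(\mathcal{R}_{\Gamma}^{\geq m}G)$ is exactly the locus where the \emph{polystable} representative has $\dim Z_{\rho^{ps}}\geq m$. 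This yields a cleaner global picture (a descending chain of closed sets $\mathcal{X}_{\Gamma}^{\geq m}G$ whose successive differences are the strata) at the cost of importing two standard facts the paper does not invoke; the verification you single out as the main subtlety is indeed the crux, and your inequality $\dim Z_{\rho^{ps}}\geq\dim Z_{\rho}$ handles it correctly, since $\rho^{ps}$ then maximizes stabilizer dimension over the fibre of $\Phi$. Two minor points of wording: the fibres of a proper morphism are \emph{complete} varieties (and a complete affine group is finite) --- ``proper subgroups'' is not the right phrase; and the density of the stable locus, in your argument as in the paper's, implicitly requires the nonempty open stratum to meet every irreducible component of $\mathcal{X}_{\Gamma}G$.
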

\begin{proof}
Let $\mathcal{R}_{\Gamma}^{m}G\subset\mathcal{R}_{\Gamma}^{ps}G$
be the subset of all polystable representations $\rho\in\mathcal{R}_{\Gamma}^{ps}G$
such that $\dim Z_{\rho}=m$, and note that we have 
\begin{eqnarray}
\mathcal{R}_{\Gamma}^{ps}G & = & \bigsqcup_{m\geq m_{0}}\mathcal{R}_{\Gamma}^{m}G,\label{eq:ps-disjoint-union1}
\end{eqnarray}
as a finite set-theoretic disjoint union. Since the stabilizer dimension
is a conjugation invariant, denote their equivalence classes under
conjugation by $\mathcal{X}_{\Gamma}^{m}G=\mathcal{R}_{\Gamma}^{m}G\quot G$.
By Proposition~\ref{prop:character-polystable representation}, the
character variety $\mathcal{X}_{\Gamma}G$ is isomorphic to the polystable
quotient and, hence, equation~\eqref{eq:ps-disjoint-union1} yields
the set-theoretic disjoint union: 
\begin{eqnarray}
\mathcal{X}_{\Gamma}G & \cong & \bigsqcup_{m\geq m_{0}}\mathcal{R}_{\Gamma}^{m}G\quot G=\bigsqcup_{m\geq m_{0}}\mathcal{X}_{\Gamma}^{m}G.
\end{eqnarray}
To prove the locally closedness property, consider the following construction.
Observe that the subset $R_{\Gamma}^{m_{0}}G=\{\rho\in\mathcal{R}_{\Gamma}^{ps}G|\dim Z_{\rho}=m_{0}=\dim G_{\mathcal{R}}\}$
(which is non-empty by assumption) is precisely the subset of stable
points, since the condition $\dim Z_{\rho}=m_{0}$ is equivalent to
$\dim Z_{\rho}$ being minimal. This also means that the orbit map
$\psi_{\rho}$ is proper and conversely (see Definition \ref{def:polystable}),
proving the last statement. Therefore, from Proposition \ref{prop:geo-quotient},
the restriction 
\[
\Phi_{m_{0}}:R_{\Gamma}^{m_{0}}G\to R_{\Gamma}^{m_{0}}G/G
\]
is a geometric quotient, and the stable locus $R_{\Gamma}^{m_{0}}G\subset\mathcal{R}_{\Gamma}^{ps}G$
and $\mathcal{X}_{\Gamma}^{m_{0}}G:=\Phi_{m_{0}}(R^{m_{0}})\subset\mathcal{X}_{\Gamma}G$
are Zariski open subsets. Now, let $R_{\Gamma}^{>m_{0}}G:=\mathcal{R}_{\Gamma}^{ps}G\setminus R_{\Gamma}^{m_{0}}G$.
Given that $R_{\Gamma}^{>m_{0}}G$ is Zariski closed in $\mathcal{R}_{\Gamma}^{ps}G$
and the action of $G$ is well defined on it, we can repeat the argument
for the subset: 
\[
R_{\Gamma}^{m_{1}}G:=\{\rho\in R_{\Gamma}^{>m_{0}}G\,|\,\dim Z_{\rho}=m_{1}\}\subset R_{\Gamma}^{>m_{0}}G,
\]
where $m_{1}\in\mathbb{N}$ is the minimum of the dimensions of $\{Z_{\rho}\,|\,\rho\in R_{\Gamma}^{>m_{0}}G\}$;
then $R_{\Gamma}^{m_{1}}G$ is a Zariski open (and non-empty) subset
(of the Zariski closed set $R_{\Gamma}^{>m_{0}}G$) containing all
stable representations in $R_{\Gamma}^{>m_{0}}G$. Hence, again, the
restriction 
\[
\Phi_{m_{1}}:R_{\Gamma}^{m_{1}}G\to R_{\Gamma}^{m_{1}}G/G,
\]
is a geometric quotient and $\mathcal{X}_{\Gamma}^{m_{1}}G:=\Phi_{m_{1}}(R_{\Gamma}^{m_{1}}G)$
is also an open subset of the Zariski closed set $\mathcal{X}_{\Gamma}G\setminus\mathcal{X}_{\Gamma}^{m_{0}}G=R_{\Gamma}^{>m_{0}}G\quot G$,
and therefore $\mathcal{X}_{\Gamma}^{m_{1}}G$ is locally closed.
By repeating this procedure in a finite number of steps we obtain
a stratification of the character variety $\mathcal{X}_{\Gamma}G$
by locally-closed quasi-projective varieties, which completes the
proof. 
\end{proof}

\section{The linear case: $GL_{n}$.}

\label{section:linearcase}

In this Section, we examine the linear case, the case of $G=GL_{n}$.

Let $\Gamma$ be a finitely presented group. We now provide explicit
formulae for the $E$-polynomials of $GL_{n}$-character varieties
of $\Gamma$ in terms of $E$-polynomials of all irreducible $GL_{m}$-character
varieties of $\Gamma$, for $m\leq n$. These formulae present several
interesting features: firstly, they are independent of the group $\Gamma$;
secondly they relate, not just the individual polynomials $E(\mathcal{X}_{\Gamma}GL_{n})$,
but their generating functions (as power series in a formal variable),
to the corresponding generating functions of the $E(\mathcal{X}_{\Gamma}^{irr}GL_{n})$;
moreover, the relation between these two kinds of generating functions
is the so-called \emph{plethystic exponential} which plays a prominent
role in the combinatorics of symmetric functions, and has applications
in counting of gauge invariant operators in supersymmetric quantum
theories (see eg. \cite{FHH}).

Note that, besides their intrinsic relevance, irreducible character
varieties often coincide, or are related with, the smooth locus of
the full character varieties. For example, by \cite{FL2} the irreducible
character variety $\mathcal{X}_{\Gamma}^{irr}GL_{n}$ coincides precisely
with the \emph{smooth locus} of the full character variety $\mathcal{X}_{\Gamma}GL_{n}$,
in the case of the free group $\Gamma=F_{r}$. Recently, this theme
has been greatly expanded in \cite{GLR}.

\subsection{The stratification by partition type }

We start by describing what we call the\emph{ stratification by partition
type} of our $GL_{n}$-character varieties, a convenient refinement
of the stratification by stabilizer dimension of Proposition~\ref{prop:locally-closed-general-G}.
Given the standard representation of $GL_{n}$ in $\mathbb{C}^{n}$,
we have a natural notion of direct sum $\rho_{1}\oplus\rho_{2}\in\mathcal{R}_{\Gamma}GL_{n_{1}+n_{2}}$
of representations $\rho_{i}:\Gamma\to GL_{n_{i}}$, $i=1,2$. This
is clearly a commutative operation.

To proceed, we need to consider partitions of $n$, and employ the
following ``power'' notation. A partition of $n\in\mathbb{N}$ is
denoted by $[k]=[1^{k_{1}}\cdots j^{k_{j}}\cdots n^{k_{n}}]$ where
the exponent $k_{j}$ means that $[k]$ has $k_{j}\geq0$ parts of
size $j\in\{1,\cdots,n\}$, so that $n=\sum_{j=1}^{n}j\cdot k_{j}$.
The sum of the exponents $|[k]|:=\sum k_{j}$ will be called the \emph{length}
of $[k]$ and $\mathcal{P}_{n}$ stands for the finite set of partitions
of $n\in\mathbb{N}$. For example, $[1^{2}\,4]\in\mathcal{P}_{6}$
is the partition $6=4+1+1$, whose length is $3$. 
\begin{defn}
Let $G=GL_{n}$ and $[k]\in\mathcal{P}_{n}$. We say that $\rho\in\mathcal{R}_{\Gamma}G=\hom(\Gamma,G)$
is $[k]$-polystable if $\rho$ is conjugated to 
\begin{equation}
\bigoplus_{j=1}^{n}\rho_{j}\label{eq:polystable-type}
\end{equation}
where each $\rho_{j}$ is, in turn, a direct sum of $k_{j}>0$ \emph{irreducible}
representations of $\mathcal{R}_{\Gamma}(GL_{j})$, for $j=1,\cdots,n$
(by convention, if some $k_{j}=0$, then $\rho_{j}$ is not present
in the direct sum). We denote $[k]$-polystable representations by
$\mathcal{R}_{\Gamma}^{[k]}G$ and use similar terminology/notation
for equivalence classes under conjugation $\mathcal{X}_{\Gamma}^{[k]}G\subset\mathcal{X}_{\Gamma}G$. \end{defn}
\begin{rem}
\label{irrstrata} We note that the trivial partition $[n]=[n^{1}]$
(of minimal length 1) corresponds exactly to the irreducible locus:
$\mathcal{R}_{\Gamma}^{[n]}G=\mathcal{R}_{\Gamma}^{irr}G$ and $\mathcal{X}_{\Gamma}^{[n]}G=\mathcal{X}_{\Gamma}^{irr}G$.
Moreover, $\mathcal{R}_{\Gamma}^{[k]}G\subset\mathcal{R}_{\Gamma}^{ps}G$
as every $[k]$-polystable representation, being a sum of irreducibles,
has indeed a closed $G$-orbit inside $\mathcal{R}_{\Gamma}G$.\end{rem}
\begin{prop}
\label{prop:locally-closed-GLn}Fix $n\in\mathbb{N}$, and let $G=GL_{n}$.
The character variety $\mathcal{X}_{\Gamma}G$ can be written as a
disjoint union, labelled by partitions $[k]\in\mathcal{P}_{n}$, of
locally closed quasi-projective varieties of $[k]$-polystable equivalence
classes: 
\[
\mathcal{X}_{\Gamma}G=\bigsqcup_{[k]\in\mathcal{P}_{n}}\mathcal{X}_{\Gamma}^{[k]}G,
\]
and this stratification refines the one by stabilizer dimension (Proposition
\ref{prop:locally-closed-general-G}).\end{prop}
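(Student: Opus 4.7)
The plan is to proceed in three steps: the set-theoretic disjoint union, the quasi-projective and locally closed structure on each stratum, and the compatibility with the stabilizer-dimension stratification of Proposition~\ref{prop:locally-closed-general-G}.

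First, I would establish the decomposition at the set level. By Proposition~\ref{prop:character-polystable representation}, every point of $\mathcal{X}_{\Gamma}G$ is represented by a polystable, i.e.\ completely reducible, representation $\rho:\Gamma\to GL_{n}$. Such a $\rho$ admits a decomposition, unique up to $G$-conjugation and permutation of summands, as $\rho\cong\bigoplus_{\alpha}\sigma_{\alpha}^{\oplus m_{\alpha}}$ with each $\sigma_{\alpha}$ irreducible of some rank $r_{\alpha}$. Setting $k_{j}:=\sum_{\alpha:\,r_{\alpha}=j}m_{\alpha}$ produces a well-defined partition $[k]=[1^{k_{1}}\cdots n^{k_{n}}]\in\mathcal{P}_{n}$ attached to each conjugacy class. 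This assignment partitions $\mathcal{X}_{\Gamma}G$ into the strata $\mathcal{X}_{\Gamma}^{[k]}G$, yielding the set-theoretic disjoint union $\mathcal{X}_{\Gamma}G=\bigsqcup_{[k]\in\mathcal{P}_{n}}\mathcal{X}_{\Gamma}^{[k]}G$ (where some strata may be empty).

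Second, I would equip each stratum with a quasi-projective structure by realising it as the image of the natural direct-sum morphism
\[
\Phi_{[k]}:\prod_{j=1}^{n}\sym^{k_{j}}(\mathcal{X}_{\Gamma}^{irr}GL_{j})\longrightarrow\mathcal{X}_{\Gamma}GL_{n},
\]
sending a tuple of classes of irreducible representations to the class of their direct sum. By Remark~\ref{irrstrata} and Proposition~\ref{prop:geo-quotient}, each $\mathcal{X}_{\Gamma}^{irr}GL_{j}$ is quasi-projective; since symmetric products preserve quasi-projectivity, so is the domain of $\Phi_{[k]}$. The map is an injective algebraic morphism onto $\mathcal{X}_{\Gamma}^{[k]}G$, the symmetric product absorbing the ambiguity of permuting isomorphic irreducible summands of the same rank. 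This exhibits each $\mathcal{X}_{\Gamma}^{[k]}G$ as a constructible subvariety of $\mathcal{X}_{\Gamma}GL_{n}$.

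Third, to upgrade constructibility to local closedness and to establish the compatibility with Proposition~\ref{prop:locally-closed-general-G}, I would argue inductively on the length $|[k]|$, modelled on the argument used in the proof of Proposition~\ref{prop:locally-closed-general-G}. The base case $[k]=[n^{1}]$ is the irreducible stratum $\mathcal{X}_{\Gamma}^{irr}GL_{n}$, which coincides with the stable locus $\mathcal{X}_{\Gamma}^{m_{0}}GL_{n}$ and is Zariski-open by Proposition~\ref{prop:geo-quotient}. For larger $|[k]|$, one works inside the closed complement of the union of strata with strictly smaller length. The key geometric input is that, in the affine GIT closure of a $[k]$-polystable orbit, the only possible limits have strictly coarser partition type, obtained by merging previously distinct irreducible summands; thus $\mathcal{X}_{\Gamma}^{[k]}G$ is open in its closure inside $\mathcal{X}_{\Gamma}G$. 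The compatibility with the stabilizer-dimension stratification follows by Schur's lemma: for $\rho=\bigoplus_{\alpha}\sigma_{\alpha}^{\oplus m_{\alpha}}$, the centralizer satisfies $Z_{\rho}\cong\prod_{\alpha}GL(V_{\alpha})$ with $\dim V_{\alpha}=m_{\alpha}$, and the resulting stabilizer dimension is controlled by $[k]$ and the coincidence pattern among summands.

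The main obstacle I anticipate is the local-closedness step: constructibility is immediate from the description of each stratum as the image of $\Phi_{[k]}$, but establishing that each $\mathcal{X}_{\Gamma}^{[k]}G$ is in fact locally closed requires careful control of affine GIT degenerations, confirming that limits of $[k]$-polystable orbits can only acquire strictly coarser partition types, never finer.
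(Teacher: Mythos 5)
Your first two steps are sound and essentially track the paper: the set-theoretic decomposition via uniqueness of the decomposition of a completely reducible representation into irreducibles is exactly the paper's argument, and your identification of each stratum with $\prod_{j}\sym^{k_{j}}(\mathcal{X}_{\Gamma}^{irr}GL_{j})$ is the content of the paper's Proposition~\ref{prop:sym-product}, so constructibility of the strata is fine. The genuine gap is in your third step, and it is twofold. First, the ``key geometric input'' you invoke is stated with the wrong orientation: under specialization in the GIT quotient an irreducible summand degenerates to a direct sum of smaller irreducibles, so limits of points of $\mathcal{X}_{\Gamma}^{[k]}G$ lie in strata whose partitions \emph{refine} $[k]$ (more parts, larger length); summands do not ``merge'' into coarser types --- that is the generization direction (the closure of the open stratum $[n^{1}]$ can meet every other stratum, while $[1^{n}]$ is closed). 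As written, your input contradicts the induction scheme you build on it, which correctly treats the union of the smaller-length strata as open. With the orientation corrected, i.e.\ $\overline{\mathcal{X}_{\Gamma}^{[k]}G}\subseteq\bigsqcup_{[k']\text{ refines }[k]}\mathcal{X}_{\Gamma}^{[k']}G$, your induction does deliver local closedness, but that inclusion is precisely the point that needs proof, and you leave it as an acknowledged obstacle rather than supplying an argument.

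Second, the refinement of the stabilizer-dimension stratification is only gestured at, and your own (correct) computation $Z_{\rho}\cong\prod_{\alpha}GL(V_{\alpha})$ with $\dim V_{\alpha}=m_{\alpha}$ shows why it is delicate: $\dim Z_{\rho}=\sum_{\alpha}m_{\alpha}^{2}$ depends on the coincidence pattern of the summands, not on $[k]$ alone (compare $\chi\oplus\chi$ and $\chi_{1}\oplus\chi_{2}$ in the $[1^{2}]$ stratum, with stabilizers of dimension $4$ and $2$). For contrast, the paper takes a different and shorter route to local closedness: it asserts $\dim Z_{\rho}=|[k]|$ by Schur's lemma, deduces that the partition strata refine the stabilizer-dimension strata of Proposition~\ref{prop:locally-closed-general-G}, and identifies partitions of equal length with distinct irreducible components of a fixed stabilizer-dimension stratum; note that the identity $\dim Z_{\rho}=|[k]|$ holds only for multiplicity-free $\rho$, so even following the paper one must handle the repeated-summand locus. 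If you pursue your route, prove the closure inclusion directly (for instance via semicontinuity of the dimension of spaces of $\Gamma$-homomorphisms between summands, or by exhibiting the union of strata of length at least $\ell$ as the image of a closed locus of decomposable representations) instead of leaning on the stabilizer-dimension stratification.
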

\begin{proof}
Let $[k]=[1^{k_{1}}\cdots n^{k_{n}}]$ be a partition of $n$. As
in the proof of Proposition~\ref{prop:locally-closed-general-G}
note that 
\begin{eqnarray}
\mathcal{R}_{\Gamma}^{ps}G & = & \bigsqcup_{[k]\in\mathcal{P}_{n}}\mathcal{R}_{\Gamma}^{[k]}G,\label{eq:ps-disjoint-union}
\end{eqnarray}
is a set theoretic disjoint union, and the analogous decomposition
is valid for the polystable character variety $\mathcal{X}_{\Gamma}^{ps}G$.
Indeed, every polystable representation is completely reducible and
this means, for $G=GL_{n}$, that it is a direct sum of irreducibles.
Thus, for every $\rho\in\mathcal{R}_{\Gamma}^{ps}G$ we have a \emph{unique}
partition $[k]=[1^{k_{1}(\rho)}\cdots n^{k_{n}(\rho)}]\in\mathcal{P}_{n}$
so that $k_{j}(\rho)$ is the number (possibly zero) of representations
in $\mathcal{R}_{\Gamma}^{irr}GL_{j}$ that appear in the decomposition
of $\rho$ in \eqref{eq:polystable-type}. In particular, the length
of $[k]$, $\sum_{j}k_{j}(\rho)=|[k]|$, is well defined by $\rho\in\mathcal{R}_{\Gamma}^{ps}G$.
Proposition~\ref{prop:character-polystable representation} and equation~\eqref{eq:ps-disjoint-union}
yield 
\begin{eqnarray}
\mathcal{X}_{\Gamma}G & \cong & \bigsqcup_{[k]\in\mathcal{P}_{n}}\mathcal{R}_{\Gamma}^{[k]}G\quot G=\bigsqcup_{[k]\in\mathcal{P}_{n}}\mathcal{X}_{\Gamma}^{[k]}G,
\end{eqnarray}
as a set theoretic disjoint union. The fact that this is a stratification
by locally-closed quasi-projective varieties follows the same steps
of the proof in Proposition \ref{prop:locally-closed-general-G},
noting that, for every $\rho\in\mathcal{R}_{\Gamma}^{[k]}G$, we have
\[
\dim Z_{\rho}=\sum k_{j}(\rho)=|[k]|.
\]
Indeed, by Schur's lemma the stabilizer of $\rho_{j}\in\mathcal{R}_{\Gamma}^{irr}GL_{j}$
is the center of $GL_{j}$, which equals $\mathbb{C}^{*}$, independently
of $j>0$. Hence, this stratification refines the one in Proposition
\ref{prop:locally-closed-general-G} and different partitions with
the same length become (disjoint) irreducible components of each stratum
by stabilizer dimension. \end{proof}
\begin{cor}
Let $G=GL_{n}\mathbb{C}$. If, for a given character variety $\mathcal{X}_{\Gamma}G$,
all $[k]$-polystable strata for $[k]\in\mathcal{P}_{n}$ are of Hodge-Tate
type, then $\mathcal{X}_{\Gamma}G$ is of Hodge-Tate type. \end{cor}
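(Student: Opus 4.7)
The plan is to combine the stratification by partition type from Proposition \ref{prop:locally-closed-GLn} with the behaviour of mixed Hodge structures under decompositions into locally closed subvarieties. The starting observation is that being of Hodge-Tate type is a structural property in the abelian category of mixed Hodge structures: it amounts to the vanishing $h^{k,p,q}=0$ whenever $p\neq q$. Since in every short exact sequence $0\to A\to B\to C\to 0$ of mixed Hodge structures one has $h^{p,q}(B)=h^{p,q}(A)+h^{p,q}(C)$, this off-diagonal vanishing is preserved under extensions, and a fortiori under subobjects and quotients.

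Next, I would invoke the long exact sequence of mixed Hodge structures for compactly supported cohomology attached to any decomposition $Y=Z\sqcup U$, with $Z\subset Y$ closed and $U=Y\setminus Z$ its open complement:
\[
\cdots\to H_c^{k-1}(Z)\to H_c^k(U)\to H_c^k(Y)\to H_c^k(Z)\to H_c^{k+1}(U)\to\cdots,
\]
which is exact in the category of mixed Hodge structures (see \cite{PS}). This exhibits each $H_c^k(Y)$ as an extension of a subobject of $H_c^k(Z)$ by a quotient of $H_c^k(U)$, so if both $Z$ and $U$ are of Hodge-Tate type in every degree, the preceding paragraph forces $Y$ to be Hodge-Tate as well.

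It remains to bootstrap this two-stratum statement to the finite stratification $\mathcal{X}_{\Gamma}G=\bigsqcup_{[k]\in\mathcal{P}_n}\mathcal{X}_{\Gamma}^{[k]}G$ coming from Proposition \ref{prop:locally-closed-GLn}. I would totally order the finite index set $\mathcal{P}_n$ refining the partial order induced by closure inclusion of strata, so that each initial union $Y_j$ of strata is closed in $\mathcal{X}_{\Gamma}G$, and then induct on $j$: at the inductive step $Y_{j+1}=Y_j\sqcup\mathcal{X}_{\Gamma}^{[k_{j+1}]}G$, with $Y_j$ closed and $\mathcal{X}_{\Gamma}^{[k_{j+1}]}G$ open in $Y_{j+1}$, so the two-stratum statement applies. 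No serious obstacle is expected; the only nontrivial ingredients are Deligne's long exact sequence in the category of mixed Hodge structures and the elementary behaviour of the Hodge-Tate property under sub-quotients and extensions. The additive property of $E$-polynomials from Proposition \ref{prop:Properties-of-E} provides, at best, a weaker numerical shadow of this argument and is insufficient on its own because it allows cancellation between off-diagonal contributions.
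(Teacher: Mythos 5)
Your proof is correct, and it is actually more complete than the argument the paper gives. The paper's proof of this corollary is a one-line appeal to Proposition \ref{prop:locally-closed-GLn} together with Proposition \ref{prop:Properties-of-E}; taken literally, that only yields the identity $E(\mathcal{X}_{\Gamma}G)=\sum_{[k]}E(\mathcal{X}_{\Gamma}^{[k]}G)$, which, as you rightly point out, is a numerical shadow of the Hodge-Tate property and cannot by itself exclude cancellation of off-diagonal $h^{k,p,q}$ across cohomological degrees. Your route --- the long exact sequence of mixed Hodge structures in compactly supported cohomology for a closed--open decomposition, combined with the stability of the Hodge-Tate condition under subobjects, quotients and extensions (via strictness of morphisms of MHS), and an induction over a filtration of $\mathcal{X}_{\Gamma}G$ by closed unions of strata --- is the argument that actually proves the statement at the level of mixed Hodge structures rather than of $E$-polynomials. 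The needed filtration by closed subsets is available here because the proof of Proposition \ref{prop:locally-closed-GLn} (following Proposition \ref{prop:locally-closed-general-G}) constructs the strata precisely by successively peeling off open pieces, so your inductive step applies verbatim. In short: the paper's citation of Proposition \ref{prop:Properties-of-E} should be read as shorthand for the long-exact-sequence mechanism underlying additivity, and your write-up supplies exactly the detail that shorthand omits.
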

\begin{proof}
This follows at once, by combining Proposition \ref{prop:locally-closed-GLn}
with Proposition~\ref{prop:Properties-of-E}. 
\end{proof}
Note that the converse statement is not valid in general. Moreover,
there are character varieties which are not of Hodge-Tate type. Indeed,
a recent article by I. Rapinchuk \cite{Ra} showed that every irreducible
affine variety, defined over $\mathbb{Q}$, can be written as an irreducible
component of a character variety; this class certainly contains varieties
which are not of Hodge-Tate type, such as a smooth affine cubic in
the plane isomorphic to an elliptic curve with one point removed\footnote{We thank Sean Lawton for providing us this reference.}.

\subsection{Generating functions of $E$-polynomials}

Recall that the partition $[k]=[1^{k_{1}}\cdots n^{k_{n}}]\in\mathcal{P}_{n}$
has $k_{j}\geq0$ parts of size $j\in\{1,\cdots,n\}$. For each $[k]\in\mathcal{P}_{n}$,
denote by $L_{[k]}$ the reductive subgroup: 
\begin{equation}
L_{[k]}:=GL_{1}^{k_{1}}\times\cdots\times GL_{n}^{k_{n}}\subset GL_{n},\label{eq:k-Levi}
\end{equation}
which we call the $[k]$-Levi of $GL_{n}$ (in fact, all Levi subgroups
of $GL_{n}$ are conjugate to one obtained in this way).

Now, $L_{[k]}$ acts naturally, factorwise, on the space of polystable
representations of type $[k]$, $\mathcal{R}_{\Gamma}^{[k]}G$, and
the GIT quotient is a product of irreducible character varieties (recall
that each block of polystable representations corresponds to irreducible
ones): 
\begin{equation}
\mathcal{R}_{\Gamma}^{[k]}G\quot L_{[k]}=(\mathcal{X}_{\Gamma}^{irr}GL_{1})^{k_{1}}\times(\mathcal{X}_{\Gamma}^{irr}GL_{2})^{k_{2}}\times\cdots\times(\mathcal{X}_{\Gamma}^{irr}GL_{n})^{k_{n}}.\label{eq:P/L}
\end{equation}
Note, however, that this does not coincide with the $[k]$-character
variety $\mathcal{X}_{\Gamma}^{[k]}GL_{n}$ as defined in Proposition
\ref{prop:locally-closed-GLn}. Indeed, when some $k_{j}>1$, there
is a permutation group acting on $\mathcal{R}_{\Gamma}^{[k]}G$ by
permuting the blocks of equal size. To obtain $\mathcal{X}_{\Gamma}^{[k]}G$
define, for each partition $[k]\in\mathcal{P}_{n}$, the finite subgroup
\[
S_{[k]}:=S_{k_{1}}\times S_{k_{2}}\times\cdots\times S_{k_{n}}\subset S_{n},
\]
of the symmetric group $S_{n}$ on $n$ letters. For an algebraic
variety $X$, we let $\sym^{m}(X)=X^{m}/S_{m}$ denote the $m^{th}$
symmetric product of $X$. 
\begin{prop}
\label{prop:sym-product} Let $G=GL_{n}$ and let $\Gamma$ be a finitely
presented group. For every partition $[k]\in\mathcal{P}_{n}$, there
are isomorphisms of algebraic varieties: 
\[
\mathcal{X}_{\Gamma}^{[k]}G\cong\times_{j=1}^{n}\sym^{k_{j}}(\mathcal{X}_{\Gamma}^{irr}GL_{j}).
\]
\end{prop}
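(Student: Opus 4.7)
The plan is to identify $\mathcal{X}_\Gamma^{[k]} G$ with the symmetric-product variety on the right by showing both are GIT quotients obtained from the block-diagonal Levi $L_{[k]}$, the first by the full normalizer of $L_{[k]}$ in $GL_n$ and the second via the two steps ``quotient by $L_{[k]}$, then quotient by the Weyl-type group $S_{[k]}$''. In one sentence, the isomorphism is encoded by
\[
\mathcal{X}_\Gamma^{[k]} G \;\cong\; \mathcal{R}_\Gamma^{[k]} L_{[k]} \quot N_{GL_n}(L_{[k]}) \;\cong\; \bigl(\mathcal{R}_\Gamma^{[k]} L_{[k]} \quot L_{[k]}\bigr) \big/ S_{[k]},
\]
and the right-hand side is the desired product of symmetric products by \eqref{eq:P/L} together with the observation that $S_{[k]} = \prod_j S_{k_j}$ acts on each factor separately.

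Setup. First, I would introduce the normalizer $N:=N_{GL_n}(L_{[k]})$ and verify the standard fact that $N/L_{[k]} \cong S_{[k]}$: an element of $N$ permutes the $GL_j$-blocks, and this permutation must preserve block sizes, yielding $\prod_j S_{k_j}$. Second, I would set up the block-diagonal inclusion $\iota: \mathcal{R}_\Gamma^{[k]} L_{[k]} \hookrightarrow \mathcal{R}_\Gamma^{[k]} G$ defined by \eqref{eq:polystable-type}, and note that every $\rho \in \mathcal{R}_\Gamma^{[k]} G$ is $GL_n$-conjugate to a point in $\iota(\mathcal{R}_\Gamma^{[k]} L_{[k]})$ directly from the definition of $[k]$-polystable.

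Bijection. The crucial set-theoretic claim is that two points $\rho_1, \rho_2 \in \iota(\mathcal{R}_\Gamma^{[k]} L_{[k]})$ are $GL_n$-conjugate if and only if they are $N$-conjugate. The nontrivial direction uses Schur's lemma: an intertwiner $g \in GL_n$ between two decompositions of $\rho_1$ and $\rho_2$ into irreducible summands must (by the essential uniqueness of the decomposition into irreducibles) send each summand of $\rho_1$ isomorphically onto some summand of $\rho_2$ of the same dimension, and inside each matched pair the intertwiner is determined up to a scalar; globally this forces $g \in N$. Combining with the previous paragraph and Proposition~\ref{prop:character-polystable representation} yields a natural bijection $\mathcal{X}_\Gamma^{[k]} G \leftrightarrow \prod_j \operatorname{Sym}^{k_j}(\mathcal{X}_\Gamma^{irr} GL_j)$ at the level of sets.

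Algebraic structure. The main obstacle is upgrading this bijection to an isomorphism of algebraic varieties. My plan is to argue via categorical quotients: the composition
\[
\mathcal{R}_\Gamma^{[k]} L_{[k]} \;\longrightarrow\; \prod_{j=1}^n (\mathcal{X}_\Gamma^{irr} GL_j)^{k_j} \;\longrightarrow\; \prod_{j=1}^n \operatorname{Sym}^{k_j}(\mathcal{X}_\Gamma^{irr} GL_j)
\]
is $N$-invariant (the first arrow is $L_{[k]}$-invariant by \eqref{eq:P/L}, and the second kills the induced $S_{[k]} = N/L_{[k]}$-action). It also extends $GL_n$-invariantly along $\iota$ to the whole $GL_n$-saturation $\mathcal{R}_\Gamma^{[k]} G$, because every fiber of the GIT map $\mathcal{R}_\Gamma^{ps} G \to \mathcal{X}_\Gamma G$ meets $\iota(\mathcal{R}_\Gamma^{[k]} L_{[k]})$ in a single $N$-orbit by the bijection just established. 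Thus by the universal property of the affine GIT quotient one obtains an algebraic morphism $\mathcal{X}_\Gamma^{[k]} G \to \prod_j \operatorname{Sym}^{k_j}(\mathcal{X}_\Gamma^{irr} GL_j)$, whose inverse is supplied by the algebraic direct-sum map $\bigoplus : \prod_j (\mathcal{R}_\Gamma^{irr} GL_j)^{k_j} \to \mathcal{R}_\Gamma^{[k]} G$, which is clearly $L_{[k]}$- and $S_{[k]}$-equivariant and hence descends to the quotients. The delicate point to verify is that these mutually inverse morphisms are indeed regular (not just continuous bijections); a Luna étale slice argument at each polystable orbit, using that the stabilizers inside $L_{[k]}$ are products of the centers of the $GL_j$'s (by Schur), handles this and closes the proof.
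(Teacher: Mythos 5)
Your proposal follows essentially the same route as the paper's proof, which is much terser: the paper simply observes that the $GL_n$-action on $\mathcal{R}_{\Gamma}^{[k]}G$ reduces to the factorwise $L_{[k]}$-action together with permutations of blocks of equal size, so that $\mathcal{X}_{\Gamma}^{[k]}G\cong\bigl(\mathcal{R}_{\Gamma}^{[k]}G\quot L_{[k]}\bigr)/S_{[k]}$ and then applies \eqref{eq:P/L} and $X^{k}/S_{k}=\sym^{k}X$. Your write-up supplies the details the paper leaves implicit (the normalizer computation $N/L_{[k]}\cong S_{[k]}$, the descent of regular functions, the slice argument for regularity), and the overall structure is sound. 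One step is stated incorrectly, though the conclusion survives: it is \emph{not} true that an arbitrary intertwiner $g$ between two block-diagonal polystable representations must lie in $N$. If an irreducible summand $\sigma$ of dimension $j$ occurs with multiplicity $m\geq 2$, Schur's lemma gives a full $GL_{m}$ worth of automorphisms of $\sigma^{\oplus m}$ (acting on the multiplicity space), most of which are not block-monomial and hence do not normalize $L_{[k]}$. What is true, and what you actually need, is the weaker statement that $GL_n$-conjugacy of two block-diagonal representations forces equality of the multisets of isomorphism classes of their irreducible summands (by Krull--Schmidt), and hence the \emph{existence} of some conjugating element in $N$; with that one-sentence repair your argument goes through.
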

\begin{proof}
This follows directly from the construction above. Indeed, since:
\[
\mathcal{X}_{\Gamma}^{[k]}G\cong\mathcal{R}_{\Gamma}^{[k]}G\quot G,
\]
and the action of $G$ on $\mathcal{R}_{r}^{[k]}G$ reduces to an
action of $L_{[k]}$ and the action of permutation of blocks of equal
size, we get from equation \eqref{eq:P/L}: 
\[
\mathcal{X}_{\Gamma}^{[k]}G\cong\left(\mathcal{R}_{\Gamma}^{[k]}G\quot L_{[k]}\right)/S_{[k]}\cong\left(\times_{j=1}^{n}(\mathcal{X}_{\Gamma}^{irr}GL_{j})^{k_{j}}\right)/\left(\times_{j=1}^{n}S_{k_{j}}\right).
\]
Moreover, since each subgroup $S_{k_{j}}\subset S_{n}$ only permutes
the $k_{j}$ blocks of size $j$, and does not act on other blocks,
the result follows from $X^{k}/S_{k}=\sym^{k}X$. 
\end{proof}
By the above proposition, we need to consider symmetric products of
irreducible character varieties. It is interesting to observe that
the $E$-polynomials of symmetric products are intrinsically related
to the so-called \emph{plethystic exponential functions}, which we
now recall. Given a power series $f\in\mathbb{Q}[x,y][[z]]$, formal
in $z$, written in the form: 
\begin{equation}
f(x,y,z)=\sum_{n\geq0}f_{n}(x,y)\,z^{n},\label{eq:formal-ps}
\end{equation}
where $f_{n}(x,y)\in\mathbb{Q}[x,y]$ are polynomials in $x,y$, with
rational coefficients\footnote{For our purposes, coefficients in $\mathbb{Q}$ are enough, although
the theory can be developed over any field or even ring. }, the plethystic exponential, denoted $\mbox{PExp}$, is defined formally
(in terms of the usual exponential) as: 
\[
\pexp(f):=e^{\Psi(f)}\in\mathbb{Q}[x,y][[z]],
\]
where $\Psi$, called the (multi-variable) \emph{Adams operator},
is the invertible $\mathbb{Q}$-linear operator on $\mathbb{Q}[x,y][[z]]$
acting on monomials in $x$, $y$ and $z$ as: $\Psi(x^{i}y^{j}z^{k})=\sum_{l\geq1}\frac{x^{li}y^{lj}z^{lk}}{l},$
where $(i,j,k)\in\mathbb{N}_{0}^{3}\setminus\{(0,0,0)\}$. Note that,
from the additivity of $\Psi$, we get the property: 
\[
\pexp(f_{1}+f_{2})=\pexp(f_{1})\,\pexp(f_{2}),\quad\forall f_{1},f_{2}\in\mathbb{Q}[x,y][[z]].
\]

\begin{prop}
\label{prop:Sym-Pexp}Let $X$ be a quasi-projective variety. Then,
the generating function of the $E$-polynomial of its symmetric products
$\mbox{Sym}^{m}(X)$, $m\in\mathbb{N}$, is a rational function, and
can be written as: 
\[
\sum_{n\geq0}E(\sym^{n}(X);u,v)\,y^{n}=\pexp(E(X;u,v)y).
\]
\end{prop}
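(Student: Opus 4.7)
The plan is to follow the MacDonald--Cheah strategy: reduce the claim to ``atomic'' contributions via additivity, then verify each Hodge-weighted contribution by a direct symmetric/exterior power computation.

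First I would exploit the fact that both sides of the identity behave multiplicatively under stratification. For any decomposition $X = Z \sqcup U$ into a closed subvariety and its open complement, one has
$$\sym^n X = \bigsqcup_{a+b=n} \sym^a Z \times \sym^b U,$$
so Proposition \ref{prop:Properties-of-E} (additivity and K\"unneth) gives $E(\sym^n X) = \sum_{a+b=n} E(\sym^a Z)\, E(\sym^b U)$, and hence
$$\sum_n E(\sym^n X)\, y^n = \Bigl(\sum_a E(\sym^a Z)\, y^a\Bigr)\Bigl(\sum_b E(\sym^b U)\, y^b\Bigr).$$
On the right-hand side, $E(X) = E(Z) + E(U)$ combined with $\pexp(f_1 + f_2) = \pexp(f_1)\pexp(f_2)$ yields $\pexp(E(X)y) = \pexp(E(Z)y)\pexp(E(U)y)$. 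So if the formula holds for $Z$ and $U$, it holds for $X$; iterating, it suffices to verify the identity on the contribution of a single mixed Hodge class.

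Next, by K\"unneth the $S_n$-invariant subspace of $H_c^*(X)^{\otimes n}$, whose alternating sum computes $E(\sym^n X)$, splits (compatibly with Deligne's mixed Hodge structure) into symmetric powers of the even-degree part and exterior powers of the odd-degree part of $H_c^*(X)$. Thus it is enough to check, for each triple $(k,p,q)$ with multiplicity $h = h^{k,p,q}(X)$, that the corresponding contribution to $\sum_n E(\sym^n X)\, y^n$ equals $\pexp\bigl((-1)^k h\, u^p v^q y\bigr)$, because the full identity then follows from multiplicativity of $\pexp$ and the definition of $E(X;u,v)$.

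The final computation treats the two parities separately. For even $k$, the contribution of $h$ classes of weight $u^p v^q$ through symmetric powers is
$$\sum_n \binom{h+n-1}{n} (u^p v^q y)^n = \frac{1}{(1-u^p v^q y)^{h}} = \pexp(h\, u^p v^q y).$$
For odd $k$, the Koszul sign forces $S_n$-invariants into $\wedge^n$, and after incorporating the sign $(-1)^k = -1$ from the definition of $E$, the contribution becomes
$$\sum_n (-1)^n \binom{h}{n} (u^p v^q y)^n = (1-u^p v^q y)^{h} = \pexp(-h\, u^p v^q y),$$
using the elementary identity $\pexp(-z) = e^{\log(1-z)} = 1 - z$ applied to the atom $z = u^p v^q y$. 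Both parities are subsumed by $\pexp((-1)^k h\, u^p v^q y)$, so summing over $(k,p,q)$ and applying multiplicativity of $\pexp$ gives the claim.

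The main obstacle is the mixed-Hodge compatibility invoked in the second step: one must know that the $S_n$-action on $H_c^*(X^n) \cong H_c^*(X)^{\otimes n}$ respects Deligne's weight and Hodge filtrations, so that the $\sym^n$/$\wedge^n$ decomposition proceeds weight-by-weight even when $X$ is singular or non-complete. This is a standard functoriality of mixed Hodge structures (see \cite{PS}), and once granted, the rest is a formal manipulation of generating series.
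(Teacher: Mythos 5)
Your argument is correct, but it follows a genuinely different route from the paper. The paper treats the MacDonald--Cheah generating function for the full mixed Hodge polynomial $\mu(\sym^{n}X;t,u,v)$ as a black box, specializes at $t=-1$ to get $\sum_{n}E(\sym^{n}X)y^{n}=\prod_{k,p,q}(1-u^{p}v^{q}y)^{(-1)^{k+1}h^{k,p,q}(X)}$, and then identifies this product with $\pexp(E(X)y)$ via the product form of the plethystic exponential (Lemma \ref{lem:Pexp-prod}). You instead re-derive the product formula at the level of $E$-polynomials: $H_{c}^{*}(\sym^{n}X)\cong\bigl(H_{c}^{*}(X)^{\otimes n}\bigr)^{S_{n}}$ as mixed Hodge structures, the graded invariants split into symmetric powers of the even part and exterior powers of the odd part, and the atom-by-atom count $(1-u^{p}v^{q}y)^{\mp h}=\pexp(\pm h\,u^{p}v^{q}y)$ assembles the claim. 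The computation is right (including the Koszul sign for odd $k$ and the identity $\pexp(-z)=1-z$), and the ``main obstacle'' you flag --- compatibility of the $S_{n}$-action and the transfer isomorphism with Deligne's filtrations for possibly singular, non-complete $X$ --- is exactly the content of Cheah's theorem, so your sketch is essentially a proof of the input the paper cites. Two minor remarks: your opening stratification step is not load-bearing (it shows both sides are compatible with decompositions but cannot by itself reduce to a single Hodge class; that reduction comes from the invariants decomposition), and the paper's route is shorter precisely because it outsources the hard MHS functoriality to \cite{Ch}. What your version buys is a self-contained explanation of \emph{why} the plethystic exponential appears, namely the $\sym/\wedge$ dichotomy for graded vector spaces.
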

\begin{proof}
We apply the generating function of J. Cheah who showed, in \cite{Ch},
the following formula: 
\[
\sum_{n\geq0}\mu(\sym^{n}(X);t,u,v)\,y^{n}=\prod_{k,p,q\geq0}(1-(-t)^{k}u^{p}v^{q}y)^{(-1)^{k+1}h^{k,p,q}(X)},
\]
(recall from equation \eqref{eq:h_c} that $h^{k,p,q}(X)$ are the
Hodge-Deligne numbers of $X$ for cohomology with \emph{compact support}).
Since, by definition, $E(X;u,v)=\sum_{k,p,q\geq0}(-1)^{k}h^{k,p,q}(X)\,u^{p}\,v^{q}$,
the above equality becomes: 
\[
\sum_{n\geq0}E(\sym^{n}(X);u,v)\,y^{n}=\prod_{k,p,q\geq0}(1-u^{p}v^{q}y)^{(-1)^{k+1}h^{k,p,q}(X)}.
\]
The proof follows from the next Lemma, by using $a_{p,q}:=\sum_{k\geq0}(-1)^{k}h^{k,p,q}(X)$. 
\end{proof}
Recall that plethystic exponentials have also a \emph{product form}.
The following can be shown in much greater generality; we restrict
to the case at need, for simplicity. 
\begin{lem}
\label{lem:Pexp-prod}If $g(u,v)=\sum_{p,q\geq0}a_{p,q}u^{p}v^{q}$
for some $a_{p,q}\in\mathbb{Z}$, then: 
\[
\pexp(g(u,v)y)=\prod_{p,q\geq0}(1-u^{p}v^{q}y)^{-a_{p,q}}.
\]
\end{lem}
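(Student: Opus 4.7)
The plan is to unwind the definition $\pexp(f) = \exp(\Psi(f))$ directly and apply the standard formal power series identity $\exp\!\left(\sum_{l\geq 1} z^l/l\right) = (1-z)^{-1}$. First, using the $\mathbb{Q}$-linearity of the Adams operator $\Psi$ and computing on each monomial via $\Psi(u^p v^q y) = \sum_{l\geq 1}(u^p v^q y)^l/l$, I obtain
\[
\Psi(g(u,v)\,y) \;=\; \sum_{p,q\geq 0} a_{p,q}\,\Psi(u^p v^q y) \;=\; \sum_{p,q\geq 0} a_{p,q}\sum_{l \geq 1} \frac{(u^p v^q y)^l}{l}.
\]

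Next, since we are working in the commutative ring $\mathbb{Q}[u,v][[y]]$ and $\exp$ converts sums into products, exponentiating yields
\[
\pexp(g(u,v)\,y) \;=\; \prod_{p,q\geq 0} \exp\!\left(\sum_{l\geq 1}\frac{(u^p v^q y)^l}{l}\right)^{a_{p,q}}.
\]
Specialising the identity above to $z = u^p v^q y$ inside each factor produces $(1-u^p v^q y)^{-1}$, which is then raised to the integer power $a_{p,q}$. The product therefore collapses to
\[
\pexp(g(u,v)\,y) \;=\; \prod_{p,q\geq 0}(1-u^p v^q y)^{-a_{p,q}},
\]
which is the claimed formula.

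The only thing to double-check is that all sums and products are well defined. Since $g(u,v)$ is a polynomial in $u,v$, only finitely many of the $a_{p,q}$ are nonzero, so the outer product is actually finite and no convergence question arises. Even if one allowed infinite support for the $a_{p,q}$, each factor $(1-u^p v^q y)^{-a_{p,q}}$ is of the form $1 + y\cdot(\ldots)$, so only finitely many factors contribute to the coefficient of any fixed power $y^n$, and the product converges in the $y$-adic topology. So no genuine obstacle arises here; the argument is essentially a direct and routine bookkeeping check once the definition of $\pexp$ is unpacked.
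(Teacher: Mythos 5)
Your proof is correct and follows essentially the same route as the paper: both compute $\Psi(g(u,v)y)=\sum_{p,q}a_{p,q}\sum_{l\geq1}(u^{p}v^{q}y)^{l}/l$ and then invoke $\sum_{l\geq1}z^{l}/l=-\log(1-z)$, the only cosmetic difference being that the paper takes logarithms of both sides while you exponentiate. No issues.
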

\begin{proof}
Taking the logarithm of the left hand side, we get: 
\begin{eqnarray*}
\Psi(g(u,v)y) & = & \sum_{k\geq1}\frac{g(u^{k},v^{k})y^{k}}{k}=\sum_{k\geq1}\sum_{p,q\geq0}\frac{a_{p,q}u^{kp}v^{kq}y^{k}}{k}=\\
 & = & \sum_{p,q\geq0}a_{p,q}\sum_{k\geq1}\frac{(u^{p}v^{q}y)^{k}}{k}=-\sum_{p,q\geq0}a_{p,q}\log(1-u^{p}v^{q}y)\\
 & = & \log\left(\prod_{p,q\geq0}(1-u^{p}v^{q}y)^{-a_{p,q}}\right),
\end{eqnarray*}
which is the logarithm of the right hand side.\end{proof}
\begin{rem}
As mentioned, the above product formula is valid more generally, and
there are analogous formulae for formal power series in any number
of variables. 
\end{rem}
We will also use the following property of formal power series. 
\begin{lem}
\label{lem:formal-product}Let $R$ be a ring and let $g_{n}\in R[[t]]$
be a sequence of formal power series written as 
\[
g_{n}(t)=\sum_{k\geq0}a_{k}^{(n)}t^{k},\quad n\in\mathbb{N},\quad a_{k}^{(n)}\in R.
\]
Then 
\[
\prod_{n\geq1}g_{n}(t^{n})=g_{1}(t)\,g_{2}(t^{2})\,g_{3}(t^{3})\cdots=\sum_{n\geq0}\sum_{[k]\in\mathcal{P}_{n}}a_{k_{1}}^{(1)}\cdots a_{k_{n}}^{(n)}t^{n}.
\]
\end{lem}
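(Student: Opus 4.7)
The identity is a formal power series statement, and the natural plan is a direct distributive expansion of the infinite product, followed by matching index sequences with partitions.

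First, I would record the key observation that
\[
g_n(t^n) \;=\; a_0^{(n)} + a_1^{(n)} t^n + a_2^{(n)} t^{2n} + \cdots
\]
has its lowest non-constant monomial in degree $n$. Consequently, in the product $\prod_{n \geq 1} g_n(t^n)$, only the factors indexed by $n \leq N$ can contribute non-constant terms to the coefficient of $t^N$, and the product is well-defined as an element of $R[[t]]$ (under the standing normalization $a_0^{(n)} = 1$ for all $n$, which is precisely the case in the applications of Section~\ref{section:linearcase}, where the $g_n$ will be generating series of $E$-polynomials of symmetric products).

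Next, I would expand by distributivity. A generic summand of $\prod_{n \geq 1} g_n(t^n)$ is obtained by choosing, for each $n \geq 1$, an integer $k_n \in \mathbb{N}_0$ and selecting the monomial $a_{k_n}^{(n)} t^{n k_n}$ from the $n$-th factor; the corresponding contribution is
\[
\Bigl(\prod_{n \geq 1} a_{k_n}^{(n)}\Bigr)\; t^{\sum_{n \geq 1} n k_n}.
\]

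Finally, I would collect by total degree. A sequence $(k_1, k_2, \ldots)$ contributes to the coefficient of $t^N$ precisely when $\sum_n n k_n = N$. This forces $k_n = 0$ for all $n > N$, so such sequences are in bijection with partitions $[k] = [1^{k_1} 2^{k_2} \cdots N^{k_N}] \in \mathcal{P}_N$, and the normalized contribution from each partition reduces to $a_{k_1}^{(1)} \cdots a_{k_N}^{(N)}$. Summing over $[k] \in \mathcal{P}_N$ and then over $N \geq 0$ yields the stated formula.

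The proof is essentially a book-keeping exercise, so there is no substantial obstacle; the only subtle point is ensuring convergence of the infinite product in $R[[t]]$, which is handled by the degree observation that each $g_n(t^n)$ is congruent to its constant term modulo $t^n$.
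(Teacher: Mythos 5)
Your proof is correct and follows essentially the same route as the paper's: expand the product $\prod_{n\geq 1}g_n(t^n)$ by distributivity and observe that the sequences $(k_1,\dots,k_n)$ contributing to the coefficient of $t^n$ via $\sum_j j\,k_j=n$ are exactly the partitions $[k]\in\mathcal{P}_n$. Your additional remark that convergence of the infinite product in $R[[t]]$ requires the normalization $a_0^{(n)}=1$ (which indeed holds in the paper's application, where $a_0^{(n)}=E(\sym^0(\mathcal{X}_{\Gamma}^{irr}GL_n))=1$) is a legitimate precision that the paper's one-line proof leaves implicit.
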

\begin{proof}
This follows by expanding 
\[
(\sum_{k\geq0}a_{k}^{(1)}t^{k})(\sum_{k\geq0}a_{k}^{(2)}t^{2k})(\sum_{k\geq0}a_{k}^{(3)}t^{3k})\cdots=\sum_{n\geq0}b_{n}t^{n},
\]
and noting that $b_{n}$ collects all terms of the form $a_{k_{1}}^{(1)}\cdots a_{k_{n}}^{(n)}$
such that $n=\sum_{j=1}^{n}j\,k_{j}$. These are precisely the partitions
of $n$. 
\end{proof}
We are now ready for the proof of Theorem \ref{thm:main}, as follows. 
\begin{thm}
\label{thm:A-PExpB}Let $\Gamma$ be any finitely presented group,
and write $A_{n}^{\Gamma}(u,v):=E(\mathcal{X}_{\Gamma}GL_{n};u,v)$,
$B_{n}^{\Gamma}(u,v):=E(\mathcal{X}_{\Gamma}^{irr}GL_{n};u,v)$. Then:
\[
\sum_{n\geq0}A_{n}^{\Gamma}(u,v)t^{n}=\pexp\left(\sum_{n\geq1}B_{n}^{\Gamma}(u,v)t^{n}\right).
\]
\end{thm}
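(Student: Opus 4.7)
The plan is to combine the stratification by partition type with the symmetric product description of each stratum, and then assemble everything via the plethystic generating series. All the necessary ingredients have been established in the excerpt, so the proof should amount to carefully chaining them together.

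First, I would use Proposition \ref{prop:locally-closed-GLn} together with the additivity of the $E$-polynomial on locally closed stratifications (Proposition \ref{prop:Properties-of-E}) to write
\[
A_n^{\Gamma}(u,v) \;=\; \sum_{[k]\in\mathcal{P}_n} E\!\left(\mathcal{X}_{\Gamma}^{[k]} GL_n;\,u,v\right).
\]
Then, applying Proposition \ref{prop:sym-product} to each stratum and the multiplicative property of $E$ on Cartesian products (Proposition \ref{prop:Properties-of-E} again), I obtain
\[
A_n^{\Gamma}(u,v) \;=\; \sum_{[k]\in\mathcal{P}_n}\;\prod_{j=1}^{n} E\!\left(\sym^{k_j}(\mathcal{X}_{\Gamma}^{irr} GL_j);\,u,v\right).
\]

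Next, for each fixed $j\geq 1$, I would apply Proposition \ref{prop:Sym-Pexp} to $X=\mathcal{X}_{\Gamma}^{irr} GL_j$ with the formal variable specialized to $y=t^j$, yielding
\[
\sum_{m\geq 0} E\!\left(\sym^m(\mathcal{X}_{\Gamma}^{irr} GL_j);\,u,v\right)\, t^{jm} \;=\; \pexp\!\left(B_j^{\Gamma}(u,v)\, t^j\right),
\]
which is legitimate since the definition of $\pexp$ via the Adams operator commutes with the substitution $y\mapsto t^j$ (the coefficient of $l^{-1}$ after applying $\Psi$ is the same either way). Taking the product over $j\geq 1$ and using additivity of $\Psi$, the right-hand side becomes $\pexp\!\big(\sum_{j\geq 1} B_j^{\Gamma}(u,v)\,t^j\big)$, exactly the desired RHS of the theorem.

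It then remains to show the left-hand side of this product equals $\sum_{n\geq 0} A_n^{\Gamma}(u,v)\, t^n$. This is where Lemma \ref{lem:formal-product} enters: setting $g_j(s):=\sum_{m\geq 0} E(\sym^m(\mathcal{X}_{\Gamma}^{irr} GL_j);u,v)\, s^m$, the lemma tells me that
\[
\prod_{j\geq 1} g_j(t^j) \;=\; \sum_{n\geq 0}\;\sum_{[k]\in\mathcal{P}_n} \prod_{j=1}^{n} E\!\left(\sym^{k_j}(\mathcal{X}_{\Gamma}^{irr} GL_j);\,u,v\right)\, t^n,
\]
and the inner partition sum matches precisely the expression for $A_n^{\Gamma}(u,v)$ derived in the first step. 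Concatenating these two equalities closes the argument.

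The proof is essentially a careful assembly of previously proved facts, so the main obstacle is simply bookkeeping of the variables and checking that the substitution $y=t^j$ into the plethystic exponential is compatible with the Adams-operator definition; beyond that, no single step is delicate, and the theorem follows.
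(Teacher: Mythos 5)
Your proof is correct and follows essentially the same route as the paper: stratification by partition type plus additivity, the symmetric-product description of each stratum plus multiplicativity, Cheah's formula via Proposition \ref{prop:Sym-Pexp}, and Lemma \ref{lem:formal-product} to reassemble the generating series, finishing with the multiplicativity of $\pexp$. Your explicit check that the substitution $y\mapsto t^{j}$ commutes with the Adams operator is a point the paper leaves implicit, but it does not change the argument.
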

\begin{proof}
From Proposition \ref{prop:sym-product}, and the multiplicative property
of $E$ we get, for the $[k]$-polystable stratum of $\mathcal{X}_{\Gamma}GL_{n}$:
\[
E(\mathcal{X}_{\Gamma}^{[k]}GL_{n};u,v)=a_{k_{1}}^{(1)}(u,v)\cdots a_{k_{n}}^{(n)}(u,v),
\]
where we define the polynomials 
\[
a_{k}^{(j)}(u,v):=E(\sym^{k}(\mathcal{X}_{\Gamma}^{irr}GL_{j});u,v)\in\mathbb{Z}[u,v].
\]
Since the $E$-polynomial is also additive, we get by Proposition
\ref{prop:locally-closed-GLn}, 
\[
E(\mathcal{X}_{\Gamma}GL_{n};u,v)=\sum_{[k]\in\mathcal{P}_{n}}E(\mathcal{X}_{\Gamma}^{[k]}GL_{n};u,v)=\sum_{[k]\in\mathcal{P}_{n}}a_{k_{1}}^{(1)}(u,v)\cdots a_{k_{n}}^{(n)}(u,v).
\]
Now, we form the generating function: 
\[
\sum_{n\geq0}A_{n}^{\Gamma}(u,v)t^{n}=\sum_{n\geq0}E(\mathcal{X}_{\Gamma}GL_{n};u,v)t^{n}=\sum_{n\geq0}\left(\sum_{[k]\in\mathcal{P}_{n}}a_{k_{1}}^{(1)}(u,v)\cdots a_{k_{n}}^{(n)}(u,v)\right)t^{n}
\]
which, by Lemma \ref{lem:formal-product} (with $R=\mathbb{Z}[u,v]$)
equals to the product, 
\[
g_{1}(u,v)(t)\,g_{2}(u,v)(t^{2})\,g_{3}(u,v)(t^{3})\cdot\cdots=\prod_{n\geq1}g_{n}(u,v)(t^{n}),
\]
where: 
\begin{eqnarray*}
g_{n}(u,v)(t) & := & \sum_{k\geq0}a_{k}^{(n)}(u,v)t^{k}=\sum_{k\geq0}E(\sym^{k}(\mathcal{X}_{\Gamma}^{irr}GL_{n});u,v)t^{k}\\
 & = & \pexp(E(\mathcal{X}_{\Gamma}^{irr}GL_{n};u,v)t)=\pexp(B_{n}^{\Gamma}(u,v)t),
\end{eqnarray*}
and the last line used Proposition \ref{prop:Sym-Pexp}. Finally,
we use the multiplicative property of plethystic exponentials to obtain:
\[
\sum_{n\geq0}A_{n}^{\Gamma}(u,v)t^{n}=\prod_{n\geq1}g_{n}(u,v)(t^{n})=\prod_{n\geq1}\pexp(B_{n}^{\Gamma}(u,v)\,t^{n})=\pexp(\sum_{n\geq1}B_{n}^{\Gamma}(u,v)\,t^{n}),
\]
as wanted.\end{proof}
\begin{cor}
\label{cor:A-PExpB}Assume that $\mathcal{X}_{\Gamma}^{irr}GL_{n}$
is of Hodge-Tate type. Then: 
\[
\sum_{n\geq0}A_{n}^{\Gamma}(x)\,t^{n}=\pexp\left(\sum_{n\geq1}B_{n}^{\Gamma}(x)\,t^{n}\right),
\]
with $A_{n}^{\Gamma}(x)=E_{x}(\mathcal{X}_{\Gamma}GL_{n})$ and $B_{n}^{\Gamma}(x)=E_{x}(\mathcal{X}_{\Gamma}^{irr}GL_{n})$.\end{cor}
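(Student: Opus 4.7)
The plan is to derive this from Theorem \ref{thm:A-PExpB} by the specialization $u = v = \sqrt{x}$. Two preliminary points must be addressed: first, the full character variety $\mathcal{X}_\Gamma GL_n$ must itself be of Hodge-Tate type so that $A_n^\Gamma(x) := E_x(\mathcal{X}_\Gamma GL_n)$ is well defined; second, the plethystic exponential must commute with the specialization.

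For the first point, I would use Proposition \ref{prop:sym-product}, which identifies each stratum $\mathcal{X}_\Gamma^{[k]} GL_n$ with a product of symmetric powers $\sym^{k_j}(\mathcal{X}_\Gamma^{irr} GL_j)$, $j \leq n$. Symmetric powers preserve the Hodge-Tate property: in Cheah's formula recalled in the proof of Proposition \ref{prop:Sym-Pexp}, if $h^{k,p,q}(X)$ vanishes off the diagonal $p = q$, then each factor $(1 - (-t)^{k} u^{p} v^{q} y)^{(-1)^{k+1} h^{k,p,p}(X)}$ is a power series in $(uv)$ and $y$ alone, so the mixed Hodge numbers of $\sym^{m}(X)$ also vanish off the diagonal. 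Combined with the Künneth formula, each $[k]$-stratum is Hodge-Tate, and the corollary stated immediately after Proposition \ref{prop:locally-closed-GLn} then implies that $\mathcal{X}_\Gamma GL_n$ is of Hodge-Tate type.

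For the second point, the Adams operator sends $u^{i} v^{i} t^{k} \mapsto \sum_{l \geq 1} \frac{(uv)^{li} t^{lk}}{l}$, so it preserves the subring $\mathbb{Q}[uv][[t]] \subset \mathbb{Q}[u,v][[t]]$, and under $uv \mapsto x$ it agrees with the one-variable Adams operator used to define $\pexp$ on $\mathbb{Q}[x][[t]]$. By the first point, every $B_n^\Gamma(u,v)$ and $A_n^\Gamma(u,v)$ appearing in Theorem \ref{thm:A-PExpB} already lies in this subring, so specializing $u = v = \sqrt{x}$ on both sides commutes with $\pexp$ and produces exactly the stated identity. The only substantive verification is the closure of Hodge-Tate type under symmetric products, and this is transparent from Cheah's formula, so no real obstacle is anticipated.
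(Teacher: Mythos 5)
Your proposal is correct and follows essentially the paper's (implicit) route: the paper states this corollary with no separate proof, treating it as the specialization $u=v=\sqrt{x}$ of Theorem \ref{thm:A-PExpB}. The two verifications you supply --- that the Hodge--Tate property of the $\mathcal{X}_{\Gamma}^{irr}GL_{n}$ propagates through symmetric products and strata so that $E_x$ is well defined for $\mathcal{X}_{\Gamma}GL_{n}$ (a fact the paper itself invokes via Cheah's formula in Proposition \ref{prop:reducible_part}), and that the Adams operator preserves the subring $\mathbb{Q}[uv][[t]]$ so that $\pexp$ commutes with the specialization --- are exactly the details the paper leaves to the reader.
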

\begin{rem}
(1) When $\Gamma$ is the free group, the above formula appears in
the proof of \cite[Thm 2.5]{MR}. So Corollary \ref{cor:A-PExpB}
generalizes it to the general Hodge-Tate case.\\
 (2) The case when $B_{n}^{\Gamma}(x)=0$ for $n\geq2$ is still interesting.
For example, using $B_{1}^{\Gamma}(x)=(x-1)^{r}$ in Corollary \ref{cor:A-PExpB},
we recover the $E$-polynomials of the $GL_{n}$-character varieties
of $\Gamma=\mathbb{Z}^{r}$, the free abelian group of rank $r$.
See \cite{FS} and Subsection \ref{subsection:abelian} below.\\
 (3) We thank S. Mogovoy for drawing our attention to his recent Preprint
\cite{Mo1}, where another method of approaching this Corollary is
suggested (cf, \cite[Thm. 1.2]{Mo1}), within a general framework
for counting isomorphism classes of objects in additive categories
over finite fields (which can be traced back to \cite{Mo2}), using
also Katz's Theorem \cite[Appendix]{HRV1}. However, our proof of
Theorem \eqref{thm:A-PExpB}, does not rely on counting points over
finite fields, and hence remains valid for character varieties (over
$\mathbb{C}$) which are not necessarily of Hodge-Tate type or of
polynomial count. See Subsection \ref{subsection:cartan_brane} for
an example. 
\end{rem}

\subsection{Rectangular partitions and the $E$-polynomial of each individual
strata}

A further combinatorial analysis of the plethystic exponential in
Theorem \ref{thm:A-PExpB} allows us to get an explicit formula relating
the polynomials $A_{n}^{\Gamma}(u,v)$ and $B_{n}^{\Gamma}(u,v)$
through a \emph{finite process}: indeed, for a fixed $n$, $A_{n}^{\Gamma}(u,v)$
only depends on $B_{m}^{\Gamma}(u,v)$ for $m\leq n$, and this can
be given in a concrete way using what we call \emph{rectangular partitions}.
Moreover, this also allows to obtain a closed expression for the $E$-polynomials
of each individual strata, which can easily be implemented algorithmically
using standard computer software.

We start by noting that, in the particular case when $f$ is of the
form $f(x,y,z)=g(x,y)z$ (so that $f_{1}=g$ in equation \eqref{eq:formal-ps},
the remaining terms being zero), the plethystic exponential can be
written in yet another useful form, in terms of usual partitions. 
\begin{lem}
\label{lem:PExp-partitions}For any $g(u,v)\in\mathbb{Q}[u,v]$, we
have 
\[
\pexp(g(u,v)y)=\sum_{n\geq0}\,\left(\sum_{[k]\in\mathcal{P}_{n}}\prod_{j=1}^{n}\frac{g(u^{j},v^{j})^{k_{j}}}{k_{j}!\ j^{k_{j}}}\right)y^{n}\;.
\]
\end{lem}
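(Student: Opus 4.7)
The plan is a direct formal calculation from the definition of $\pexp$. First I would unfold the definition, writing
\[
\pexp(g(u,v)y) = \exp\bigl(\Psi(g(u,v)y)\bigr),
\]
and use $\mathbb{Q}$-linearity of the Adams operator together with its action on monomials to compute
\[
\Psi(g(u,v)y) = \sum_{l\geq 1}\frac{g(u^l,v^l)\,y^l}{l}.
\]
(The monomials appearing in $g(u,v)y$ have the variable $y$ with exponent one, so $\Psi$ produces $y^l/l$ coming from $g$'s monomials evaluated at $(u^l,v^l)$.)

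Next I would expand the exponential as a power series and apply the multinomial theorem to each power. Setting $a_l := g(u^l,v^l)/l$, we have
\[
\exp\Bigl(\sum_{l\geq 1} a_l y^l\Bigr) = \sum_{m\geq 0}\frac{1}{m!}\Bigl(\sum_{l\geq 1}a_l y^l\Bigr)^{\!m} = \sum_{m\geq 0}\frac{1}{m!}\sum_{\substack{(k_l)_{l\geq 1}\\ \sum_l k_l = m}}\frac{m!}{\prod_l k_l!}\prod_{l\geq 1}\bigl(a_l\bigr)^{k_l} y^{\sum_l l\,k_l}.
\]
The $m!$ cancels, and I would then collect terms by the total $y$-degree $n = \sum_l l\,k_l$, recognizing that the tuples $(k_l)_{l\geq 1}$ of nonnegative integers with $\sum_l l\,k_l = n$ are in bijection with partitions $[k] = [1^{k_1}\cdots n^{k_n}] \in \mathcal{P}_n$ (all $k_l$ with $l>n$ are automatically zero).

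Substituting back $a_l = g(u^l,v^l)/l$ then gives
\[
\pexp(g(u,v)y) = \sum_{n\geq 0}\Bigl(\sum_{[k]\in\mathcal{P}_n}\prod_{j=1}^{n}\frac{g(u^j,v^j)^{k_j}}{k_j!\,j^{k_j}}\Bigr)y^n,
\]
which is the desired identity. There is essentially no difficulty here beyond bookkeeping: the only points needing care are (i) that the rearrangements of the formal sums are justified because for each fixed $n$ only finitely many terms contribute to the coefficient of $y^n$ (the constraint $\sum_l l\,k_l = n$ forces $k_l = 0$ for $l>n$), and (ii) the bijection between multi-indices $(k_l)$ with $\sum_l l\,k_l = n$ and partitions of $n$, which is tautological given the ``power'' notation fixed earlier. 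Hence no real obstacle is expected; this lemma is a purely combinatorial repackaging of the exponential-of-a-log formula, and will be invoked in the corollary that extracts the closed form for $E(\mathcal{X}_\Gamma GL_n)$ in terms of rectangular partitions.
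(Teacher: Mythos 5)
Your proof is correct and follows essentially the same route as the paper: apply the Adams operator to get $\exp\bigl(\sum_{j\geq1}g(u^{j},v^{j})y^{j}/j\bigr)$, expand, and collect terms by the $y$-degree, identifying the exponent tuples with partitions of $n$. The only cosmetic difference is that the paper first factors the exponential as $\prod_{j}\exp\bigl(g(u^{j},v^{j})y^{j}/j\bigr)$ and expands each factor, whereas you expand the single exponential via the multinomial theorem; after the $m!$ cancellation these are term-for-term identical.
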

\begin{proof}
By direct computation, we have: 
\begin{eqnarray*}
\pexp(g(u,v)y) & = & \exp\left(\Psi(g(u,v)y)\right)=\exp\left(\sum_{j\geq1}\frac{g(u^{j},v^{j})y^{j}}{j}\right)\\
 & = & \prod_{j\geq1}\exp\left(\frac{g(u^{j},v^{j})y^{j}}{j}\right)=\prod_{j\geq1}\,\sum_{k\geq0}\frac{g(u^{j},v^{j})^{k}y^{jk}}{k!\,j^{k}}\\
 & = & \sum_{n\geq0}y^{n}\left(\sum_{[k]\in\mathcal{P}_{n}}\prod_{j=1}^{n}\frac{g(u^{j},v^{j})^{k_{j}}}{k_{j}!\ j^{k_{j}}}\right),
\end{eqnarray*}
where in the last expression we gather all terms that contribute to
$y^{n}$. Since these correspond to writing $n=\sum_{j=1}^{n}j\,k_{j}$,
they correspond to partitions of $n$. 
\end{proof}
To write $\mbox{PExp}$ of an arbitrary series $f(x,y,z)\in\mathbb{Q}[x,y][[z]]$
in a similar way, we also need to develop a theory of \emph{rectangular
partitions} of a positive integer. 
\begin{figure}[h]
\begin{tikzpicture}[scale=0.5] \draw (-20,5) -- (-19,5); \draw (-20,4) -- (-19,4); \draw (-20,5) -- (-20,4); \draw (-19,5) -- (-19,4); \draw (-19,5) -- (-18,5); \draw (-19,4) -- (-18,4); \draw (-19,5) -- (-19,4); \draw (-18,5) -- (-18,4); \draw (-18,5) -- (-17,5); \draw (-18,4) -- (-17,4); \draw (-18,5) -- (-18,4); \draw (-17,5) -- (-17,4); \node [above] at (-18.5,5) {$k_{3,1}=1$};
\draw (-13,5) -- (-12,5); \draw (-13,4) -- (-12,4); \draw (-13,5) -- (-13,4); \draw (-12,5) -- (-12,4); \draw (-12,5) -- (-11,5); \draw (-12,4) -- (-11,4); \draw (-12,5) -- (-12,4); \draw (-11,5) -- (-11,4); \draw (-13,3.7) -- (-12,3.7); \draw (-13,2.7) -- (-12,2.7); \draw (-13,3.7) -- (-13,2.7); \draw (-12,3.7) -- (-12,2.7); \node [above] at (-12,5) {$k_{2,1}=k_{1,1}=1$};
\draw (-7,5) -- (-6,5); \draw (-7,4) -- (-6,4); \draw (-7,5) -- (-7,4); \draw (-6,5) -- (-6,4); \draw (-7,4) -- (-6,4); \draw (-7,3) -- (-6,3); \draw (-7,4) -- (-7,3); \draw (-6,4) -- (-6,3); \draw (-7,3) -- (-6,3); \draw (-7,2) -- (-6,2); \draw (-7,3) -- (-7,2); \draw (-6,3) -- (-6,2); \node [above] at (-6.5,5) {$k_{1,3}=1$};
\draw (-2,5) -- (-1,5); \draw (-2,4) -- (-1,4); \draw (-2,5) -- (-2,4); \draw (-1,5) -- (-1,4); \draw (-2,4) -- (-1,4); \draw (-2,3) -- (-1,3); \draw (-2,4) -- (-2,3); \draw (-1,4) -- (-1,3); \draw (-2,2.7) -- (-1,2.7); \draw (-2,1.7) -- (-1,1.7); \draw (-2,2.7) -- (-2,1.7); \draw (-1,2.7) -- (-1,1.7); \node [above] at (-1.5,5) {$k_{1,2}=k_{1,1}=1$};
\draw (3,5) -- (4,5); \draw (3,4) -- (4,4); \draw (3,5) -- (3,4); \draw (4,5) -- (4,4); \draw (3,3.7) -- (4,3.7); \draw (3,2.7) -- (4,2.7); \draw (3,3.7) -- (3,2.7); \draw (4,3.7) -- (4,2.7); \draw (3,2.4) -- (4,2.4); \draw (3,1.4) -- (4,1.4); \draw (3,2.4) -- (3,1.4); \draw (4,2.4) -- (4,1.4); \node [above] at (3.5,5) {$k_{1,1}=3$};
\end{tikzpicture}

\caption{\label{fig:RP3}The five rectangular partitions of $n=3$. The gluing
map $\pi$ takes the first one to the Young diagram of the partition
$[3]$, the second one corresponds to $[1\,2]$ and the last three
to $[1^{3}]$.}
\end{figure}
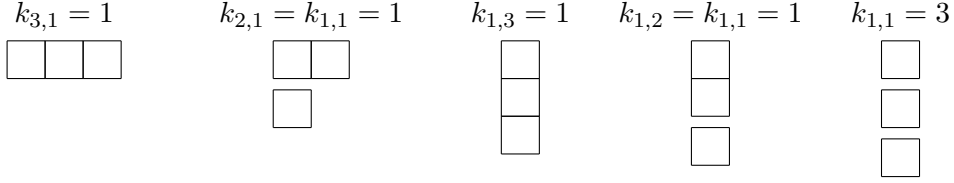

\begin{defn}
\label{def:rectangular-partition}Let $n\in\mathbb{N}$ be a natural
number. A \emph{rectangular partition} of $n$ is a double sequence
of non-negative integers $k_{l,h}\geq0$ (and $k_{l,h}\leq n$) for
each $l,h\in\{1,\cdots,n\}$ satisfying 
\[
n=\sum_{l=1}^{n}\sum_{h=1}^{n}l\,h\,k_{l,h},
\]
the finite set of rectangular paritions of $n$ is denoted by $\mathcal{RP}_{n}$
and such a rectangular partition is denoted by 
\[
[[k]]=[(1\times1)^{k_{1,1}}\,(1\times2)^{k_{1,2}}\cdots(1\times n)^{k_{1,n}}\cdots(n\times n)^{k_{n,n}}]\in\mathcal{RP}_{n}\;.
\]
There is a canonical ``gluing map'' sending a rectangular partition
to a usual partition: 
\begin{eqnarray*}
\pi:\mathcal{RP}_{n} & \to & \mathcal{P}_{n}\\{}
[[k]] & \mapsto & [m]=[1^{m_{1}}\cdots n^{m_{n}}]\quad\mbox{defined by }m_{l}:=\sum_{h=1}^{n}h\cdot k_{l,h}.
\end{eqnarray*}

\end{defn}
The geometric intepretation of rectangular partitions is as follows:
we are decomposing an initial set with area $n$, into a set of rectangles
of each possible size $l\times h\leq n$ (\emph{of length $l$ and
heigth $h$}), and each $l\times h$ rectangle appears with multiplicity
$k_{l,h}$ (rectangles $l\times h$ and $h\times l$ are considered
distinct). This explains the terminology ``gluing map'' as it is
obtained by gluing all rectangles to form the usual Young diagram
of a partition.%
\begin{comment}
These generalize the usual partitions of $n\in\mathbb{N}$, because
$[k]=[1^{k_{1}}\cdots n^{k_{n}}]\in\mathcal{P}_{n}$ can be veiwed
as symply a sequence of integers $k_{1},\cdots,k_{n}\geq0$, constrained
by the relation $n=\sum_{l=1}^{n}l\,k_{l}$. 
\end{comment}

\begin{example}
\label{exa:rectangular-partitions}For $n=3$, Figure \ref{fig:RP3}
shows the 5 possible rectangular partitions (all multiplicities $k_{l,h}$
not indicated are zero). Figure \ref{fig:RP4} shows the 11 cases
for $n=4$. 
\end{example}
The following general formula may be useful in other situations. 
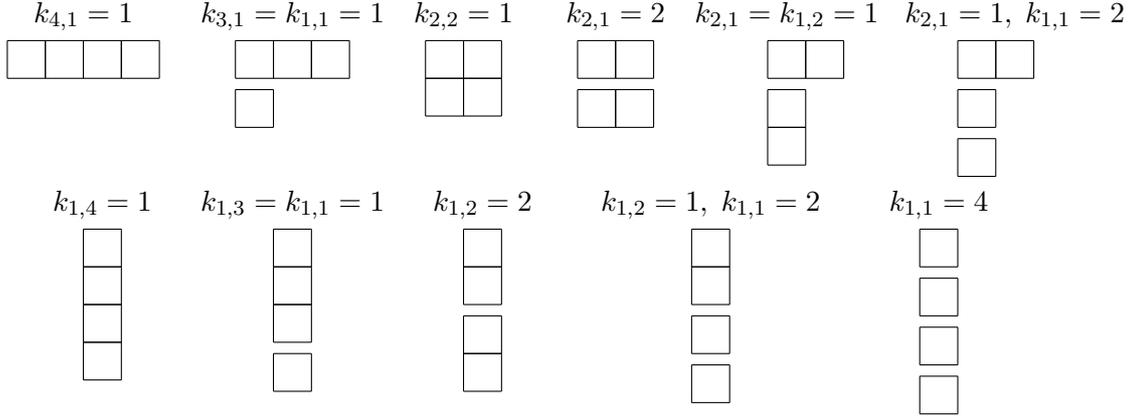
\begin{figure}[h]
\begin{tikzpicture}[scale=0.5]\draw (-20,5) -- (-19,5); \draw (-20,4) -- (-19,4); \draw (-20,5) -- (-20,4); \draw (-19,5) -- (-19,4); \draw (-19,5) -- (-18,5); \draw (-19,4) -- (-18,4); \draw (-19,5) -- (-19,4); \draw (-18,5) -- (-18,4); \draw (-18,5) -- (-17,5); \draw (-18,4) -- (-17,4); \draw (-18,5) -- (-18,4); \draw (-17,5) -- (-17,4); \draw (-17,5) -- (-16,5); \draw (-17,4) -- (-16,4); \draw (-17,5) -- (-17,4); \draw (-16,5) -- (-16,4); \node [above] at (-18,5) {$k_{4,1}=1$};
\draw (-14,5) -- (-13,5); \draw (-14,4) -- (-13,4); \draw (-14,5) -- (-14,4); \draw (-13,5) -- (-13,4); \draw (-13,5) -- (-12,5); \draw (-13,4) -- (-12,4); \draw (-13,5) -- (-13,4); \draw (-12,5) -- (-12,4); \draw (-12,5) -- (-11,5); \draw (-12,4) -- (-11,4); \draw (-12,5) -- (-12,4); \draw (-11,5) -- (-11,4); \draw (-14,3.7) -- (-13,3.7); \draw (-14,2.7) -- (-13,2.7); \draw (-14,3.7) -- (-14,2.7); \draw (-13,3.7) -- (-13,2.7); \node [above] at (-12.5,5) {$k_{3,1}=k_{1,1}=1$};
\draw (-9,5) -- (-8,5); \draw (-9,4) -- (-8,4); \draw (-9,5) -- (-9,4); \draw (-8,5) -- (-8,4); \draw (-8,5) -- (-7,5); \draw (-8,4) -- (-7,4); \draw (-8,5) -- (-8,4); \draw (-7,5) -- (-7,4); \draw (-9,4) -- (-8,4); \draw (-9,3) -- (-8,3); \draw (-9,4) -- (-9,3); \draw (-8,4) -- (-8,3); \draw (-8,4) -- (-7,4); \draw (-8,3) -- (-7,3); \draw (-8,4) -- (-8,3); \draw (-7,4) -- (-7,3); \node [above] at (-8,5) {$k_{2,2}=1$};
\draw (-5,5) -- (-4,5); \draw (-5,4) -- (-4,4); \draw (-5,5) -- (-5,4); \draw (-4,5) -- (-4,4); \draw (-4,5) -- (-3,5); \draw (-4,4) -- (-3,4); \draw (-4,5) -- (-4,4); \draw (-3,5) -- (-3,4); \draw (-5,3.7) -- (-4,3.7); \draw (-5,2.7) -- (-4,2.7); \draw (-5,3.7) -- (-5,2.7); \draw (-4,3.7) -- (-4,2.7); \draw (-4,3.7) -- (-3,3.7); \draw (-4,2.7) -- (-3,2.7); \draw (-4,3.7) -- (-4,2.7); \draw (-3,3.7) -- (-3,2.7); \node [above] at (-4,5) {$k_{2,1}=2$};
\draw (0,5) -- (1,5); \draw (0,4) -- (1,4); \draw (0,5) -- (0,4); \draw (1,5) -- (1,4); \draw (1,5) -- (2,5); \draw (1,4) -- (2,4); \draw (1,5) -- (1,4); \draw (2,5) -- (2,4); \draw (0,3.7) -- (1,3.7); \draw (0,2.7) -- (1,2.7); \draw (0,3.7) -- (0,2.7); \draw (1,3.7) -- (1,2.7); \draw (0,2.7) -- (1,2.7); \draw (0,1.7) -- (1,1.7); \draw (0,2.7) -- (0,1.7); \draw (1,2.7) -- (1,1.7); \node [above] at (0.5,5) {$k_{2,1}=k_{1,2}=1$};
\draw (5,5) -- (6,5); \draw (5,4) -- (6,4); \draw (5,5) -- (5,4); \draw (6,5) -- (6,4); \draw (6,5) -- (7,5); \draw (6,4) -- (7,4); \draw (6,5) -- (6,4); \draw (7,5) -- (7,4); \draw (5,3.7) -- (6,3.7); \draw (5,2.7) -- (6,2.7); \draw (5,3.7) -- (5,2.7); \draw (6,3.7) -- (6,2.7); \draw (5,2.4) -- (6,2.4); \draw (5,1.4) -- (6,1.4); \draw (5,2.4) -- (5,1.4); \draw (6,2.4) -- (6,1.4); \node [above] at (6.5,5) {$k_{2,1}=1,\; k_{1,1}=2$};
\draw (-18,0) -- (-17,0); \draw (-18,-1) -- (-17,-1); \draw (-18,0) -- (-18,-1); \draw (-17,0) -- (-17,-1); \draw (-18,-1) -- (-17,-1); \draw (-18,-2) -- (-17,-2); \draw (-18,-1) -- (-18,-2); \draw (-17,-1) -- (-17,-2); \draw (-18,-2) -- (-17,-2); \draw (-18,-3) -- (-17,-3); \draw (-18,-2) -- (-18,-3); \draw (-17,-2) -- (-17,-3); \draw (-18,-3) -- (-17,-3); \draw (-18,-4) -- (-17,-4); \draw (-18,-3) -- (-18,-4); \draw (-17,-3) -- (-17,-4); \node [above] at (-17.5,0) {$k_{1,4}=1$};
\draw (-13,0) -- (-12,0); \draw (-13,-1) -- (-12,-1); \draw (-13,0) -- (-13,-1); \draw (-12,0) -- (-12,-1); \draw (-13,-1) -- (-12,-1); \draw (-13,-2) -- (-12,-2); \draw (-13,-1) -- (-13,-2); \draw (-12,-1) -- (-12,-2); \draw (-13,-2) -- (-12,-2); \draw (-13,-3) -- (-12,-3); \draw (-13,-2) -- (-13,-3); \draw (-12,-2) -- (-12,-3); \draw (-13,-3.3) -- (-12,-3.3); \draw (-13,-4.3) -- (-12,-4.3); \draw (-13,-3.3) -- (-13,-4.3); \draw (-12,-3.3) -- (-12,-4.3); \node [above] at (-12.5,0) {$k_{1,3}=k_{1,1}=1$};
\draw (-8,0) -- (-7,0); \draw (-8,-1) -- (-7,-1); \draw (-8,0) -- (-8,-1); \draw (-7,0) -- (-7,-1); \draw (-8,-1) -- (-7,-1); \draw (-8,-2) -- (-7,-2); \draw (-8,-1) -- (-8,-2); \draw (-7,-1) -- (-7,-2); \draw (-8,-2.3) -- (-7,-2.3); \draw (-8,-3.3) -- (-7,-3.3); \draw (-8,-2.3) -- (-8,-3.3); \draw (-7,-2.3) -- (-7,-3.3); \draw (-8,-3.3) -- (-7,-3.3); \draw (-8,-4.3) -- (-7,-4.3); \draw (-8,-3.3) -- (-8,-4.3); \draw (-7,-3.3) -- (-7,-4.3); \node [above] at (-7.5,0) {$k_{1,2}=2$};
\draw (-2,0) -- (-1,0); \draw (-2,-1) -- (-1,-1); \draw (-2,0) -- (-2,-1); \draw (-1,0) -- (-1,-1); \draw (-2,-1) -- (-1,-1); \draw (-2,-2) -- (-1,-2); \draw (-2,-1) -- (-2,-2); \draw (-1,-1) -- (-1,-2); \draw (-2,-2.3) -- (-1,-2.3); \draw (-2,-3.3) -- (-1,-3.3); \draw (-2,-2.3) -- (-2,-3.3); \draw (-1,-2.3) -- (-1,-3.3); \draw (-2,-3.6) -- (-1,-3.6); \draw (-2,-4.6) -- (-1,-4.6); \draw (-2,-3.6) -- (-2,-4.6); \draw (-1,-3.6) -- (-1,-4.6); \node [above] at (-1.5,0) {$k_{1,2}=1,\; k_{1,1}=2$};
\draw (4,0) -- (5,0); \draw (4,-1) -- (5,-1); \draw (4,0) -- (4,-1); \draw (5,0) -- (5,-1); \draw (4,-1.3) -- (5,-1.3); \draw (4,-2.3) -- (5,-2.3); \draw (4,-1.3) -- (4,-2.3); \draw (5,-1.3) -- (5,-2.3); \draw (4,-2.6) -- (5,-2.6); \draw (4,-3.6) -- (5,-3.6); \draw (4,-2.6) -- (4,-3.6); \draw (5,-2.6) -- (5,-3.6); \draw (4,-3.9) -- (5,-3.9); \draw (4,-4.9) -- (5,-4.9); \draw (4,-3.9) -- (4,-4.9); \draw (5,-3.9) -- (5,-4.9); \node [above] at (4.5,0) {$k_{1,1}=4$};\end{tikzpicture}

\caption{\label{fig:RP4}The eleven rectangular partitions of $n=4$. The gluing
map $\pi$ takes the first rectangular partition to the Young diagram
of the partition $[4]$, the second one corresponds to $[1\,3]$,
the third and fourth ones to $[2^{2}]$, the fifth and sixth to $[1^{2}\,2]$
and the last five to $[1^{4}]$.}
\end{figure}

\begin{thm}
\label{thm:partition-formula}Given two sequences of polynomials $a_{n}(u,v),\,b_{n}(u,v)\in\mathbb{Q}[u,v]$,
satisfying: 
\begin{equation}
\sum_{n\geq0}a_{n}(u,v)t^{n}=\pexp(\sum_{n\geq1}b_{n}(u,v)t^{n})\label{eq:A-partition-B}
\end{equation}
we have: 
\[
a_{n}(u,v)=\sum_{[k]\in\mathcal{P}_{n}}\ \prod_{j=1}^{n}\frac{1}{k_{j}!}\left(\sum_{d|j}\frac{b_{d}(u^{j/d},v^{j/d})}{j/d}\right)^{k_{j}}=\sum_{[[k]]\in\mathcal{RP}_{n}}\,\prod_{l,h=1}^{n}\frac{b_{l}(u^{h},v^{h})^{k_{l,h}}}{k_{l,h}!\,h^{k_{l,h}}}.
\]
\end{thm}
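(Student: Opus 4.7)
The plan is to prove each of the two expressions for $a_n(u,v)$ separately, by expanding $\pexp\bigl(\sum_{n\geq 1} b_n(u,v)t^n\bigr)$ in two different ways. The two expansions arise from the two equivalent presentations of $\pexp$ already used in the paper: one via $\exp\circ\Psi$ together with Lemma~\ref{lem:formal-product}, and one via the multiplicative property together with Lemma~\ref{lem:PExp-partitions}.

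For the first equality, I start from the definition $\pexp = \exp\circ\Psi$. Using $\mathbb{Q}$-linearity of $\Psi$ and its monomial action,
\[
\Psi\left(\sum_{n\geq 1} b_n(u,v)\,t^n\right) \;=\; \sum_{n,l\geq 1}\frac{b_n(u^l,v^l)}{l}\,t^{nl}.
\]
Collecting by the total power of $t$ and setting $j=nl$, $d=n$, $h=j/d=l$, the coefficient of $t^j$ is $c_j(u,v) := \sum_{d\mid j}\frac{b_d(u^{j/d},v^{j/d})}{j/d}$. Then
\[
\pexp\left(\sum_n b_n t^n\right) \;=\; \exp\left(\sum_{j\geq 1} c_j\,t^j\right) \;=\; \prod_{j\geq 1}\exp(c_j t^j) \;=\; \prod_{j\geq 1}\left(\sum_{k\geq 0}\frac{c_j^k}{k!}\,t^{jk}\right),
\]
and Lemma~\ref{lem:formal-product} applied to $g_j(t)=\sum_{k\geq 0}\frac{c_j^k}{k!}t^k$ extracts the coefficient of $t^n$ as $\sum_{[k]\in\mathcal{P}_n}\prod_{j=1}^n \frac{c_j^{k_j}}{k_j!}$, which is the first formula.

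For the second equality, I instead use the multiplicative property $\pexp(f_1+f_2)=\pexp(f_1)\pexp(f_2)$ to split
\[
\pexp\left(\sum_{n\geq 1} b_n t^n\right) \;=\; \prod_{n\geq 1}\pexp(b_n(u,v)\,t^n).
\]
To each factor I apply Lemma~\ref{lem:PExp-partitions} with $g=b_n$ and $y=t^n$; to keep track of which factor a given partition came from, I index the partition multiplicities by a pair $(n,h)$, obtaining
\[
\pexp(b_n(u,v)\,t^n) \;=\; \sum_{(k_{n,h})_{h\geq 1}}\; \prod_{h\geq 1}\frac{b_n(u^h,v^h)^{k_{n,h}}}{k_{n,h}!\,h^{k_{n,h}}}\;t^{\,n\sum_{h} h\,k_{n,h}}.
\]
Multiplying over all $n$ and collecting terms, the coefficient of $t^N$ is a sum over joint tuples $(k_{n,h})_{n,h\geq 1}$ of non-negative integers subject to $N=\sum_{n,h\geq 1} n\,h\,k_{n,h}$. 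Relabeling $n\leftrightarrow l$, this constraint is exactly Definition~\ref{def:rectangular-partition} of a rectangular partition $[[k]]\in\mathcal{RP}_N$, and the expression reads precisely $\sum_{[[k]]\in\mathcal{RP}_N}\prod_{l,h}\frac{b_l(u^h,v^h)^{k_{l,h}}}{k_{l,h}!\,h^{k_{l,h}}}$.

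The main obstacle is purely bookkeeping: in the second approach, one must carefully observe that the doubly-indexed family $(k_{n,h})$ arising from combining partitions across the factors $\pexp(b_n t^n)$ corresponds bijectively, under the constraint $N=\sum n\,h\,k_{n,h}$, to elements of $\mathcal{RP}_N$. A complementary sanity check (not needed for the proof) is that the two formulas agree directly: grouping the rectangular-partition sum by the fibers of the gluing map $\pi:\mathcal{RP}_n\to\mathcal{P}_n$ and expanding $c_j^{k_j}$ via the multinomial theorem in the divisors of $j$ recovers the first formula from the second.
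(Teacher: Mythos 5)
Your proof is correct. The first equality is established exactly as in the paper: both arguments apply $\Psi$ termwise to get the divisor sums $c_j=\sum_{d\mid j}\tfrac{b_d(u^{j/d},v^{j/d})}{j/d}$, factor the exponential as $\prod_j\exp(c_jt^j)$, and read off the coefficient of $t^n$ via Lemma~\ref{lem:formal-product}. For the second equality, however, you take a genuinely different route. The paper obtains the rectangular-partition formula by regrouping the terms of the first formula --- expanding each $c_j^{k_j}$ over the divisors of $j$ and reassembling each part $j=lh$ as an $l\times h$ rectangle --- a step it describes only informally. You instead derive it independently from the factorization $\pexp(\sum_n b_nt^n)=\prod_n\pexp(b_nt^n)$ combined with Lemma~\ref{lem:PExp-partitions} applied to each factor with $y=t^n$, so that the coefficient of $t^N$ is manifestly a sum over tuples $(k_{l,h})$ with $N=\sum l\,h\,k_{l,h}$, i.e.\ over $\mathcal{RP}_N$. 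This is arguably cleaner, since the combinatorial bookkeeping reduces to the single observation that combining one partition per factor is the same as choosing a rectangular partition; the only implicit point you rely on (as does the paper in Theorem~\ref{thm:A-PExpB}) is that $\pexp(b_n(u,v)t^n)$ equals $\pexp(b_n(u,v)y)$ evaluated at $y=t^n$, which follows at once from the monomial action of $\Psi$. Your closing remark that the two formulas can be reconciled via the gluing map $\pi$ and the multinomial theorem is, in effect, a reconstruction of the paper's own argument for the second equality.
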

\begin{rem}
As with Lemma \ref{lem:Pexp-prod}, both Lemma \ref{lem:PExp-partitions}
and the above formulae are valid for an arbitrary number $r$ of variables.
For example, we have: 
\[
a_{n}(u_{1},\cdots,u_{r})=\sum_{[[k]]\in\mathcal{RP}_{n}}\,\prod_{l,h=1}^{n}\frac{b_{l}(u_{1}^{h},\cdots,u_{r}^{h})^{k_{l,h}}}{k_{l,h}!\,h^{k_{l,h}}},
\]
when $\sum_{n\geq0}a_{n}t^{n}=\pexp(\sum_{n\geq1}b_{n}t^{n})$, and
$a_{n},b_{n}\in\mathbb{Q}[u_{1},\cdots,u_{r}]$.\end{rem}
\begin{proof}
In view of the above Remark, we consider the one variable case $x=uv$,
the general case being analogous. By setting $C_{j}(x):=\sum_{d|j}\frac{b_{d}(x^{j/d})}{j/d},$
we first show that $a_{n}(x)=\sum_{[k]\in\mathcal{P}_{n}}\prod_{j=1}^{n}\frac{C_{j}(x)^{k_{j}}}{k_{j}!}$.
This can be done by expanding the exponential:

\begin{eqnarray*}
\sum_{n\geq0}a_{n}(x)\,t^{n} & = & \exp(\Psi((b_{1}(x)t)+\Psi(b_{2}(x)t^{2})+\cdots)\\
 & = & \exp(b_{1}(x)t+\frac{1}{2}b_{1}(x^{2})t^{2}+\cdots+b_{2}(x)t^{2}+\frac{1}{2}b_{2}(x^{2})t^{4}+\cdots)\\
 & = & \exp(\sum_{n\geq1}\sum_{d|n}\frac{d}{n}b_{d}(x^{n/d})t^{n})\\
 & = & \prod_{n\geq1}\exp(C_{n}(x)t^{n})=\prod_{n\geq1}\sum_{k=0}\frac{1}{k!}C_{n}(x)^{k}t^{nk}\;.
\end{eqnarray*}
By collecting all terms contributing to a given $m=\sum jk_{j}$,
we see that we are considering partitions of $m$, in the form $[k]=[1^{k_{1}}\cdots m^{k_{m}}]$
and we get 
\[
\sum_{n\geq0}a_{n}(x)\,t^{n}=\sum_{m\geq1}(\sum_{[k]\in\mathcal{P}_{m}}\prod_{j=1}^{m}\frac{1}{k_{j}!}C_{j}(x)^{k_{j}})\,t^{m},
\]
which finishes the proof of the first expression. To prove the second
one, we need to collect all terms contributing to a given part of
size $j$: we see that we are decomposing $j=lh$, as a rectangle
of length $l$ and height $h$, where the length appears as the subscript
in the polynomials $b_{l}$, and the height appears as the power of
the variable $x$. Moreover, the coefficient of each rectangle $l\times h$
is precisely $\frac{1}{k!\,h^{k}}$ if its multiplicity is $k\geq0$.
So, we get a sum of rectangular partitions of $n$, where $[[k]]\in\mathcal{RP}_{n}$
contributes as: 
\[
\prod_{l,h=1}^{n}\frac{b_{l}(x^{h})^{k_{l,h}}}{k_{l,h}!\,h^{k_{l,h}}},
\]
as wanted. 
\end{proof}
\noindent Now, we can prove Corollary \ref{cor:main}, and write the
$E$-polynomial of each stratum (by partition type) in terms of the
irreducible lower dimensional strata. 
\begin{thm}
\label{thm:individual-strata}Let $\Gamma$ be a finitely presented
group. Then, 
\[
E(\mathcal{X}_{\Gamma}GL_{n};u,v)=\sum_{[[k]]\in\mathcal{RP}_{n}}\ \prod_{l,h=1}^{n}\frac{B_{l}^{\Gamma}(u^{h},v^{h})^{k_{l,h}}}{k_{l,h}!\,h^{k_{l,h}}},
\]
Moreover, for a given $[m]\in\mathcal{P}_{n}$, the $E$-polynomial
of the corresponding stratum is: 
\begin{eqnarray*}
E(\mathcal{X}_{\Gamma}^{[m]}GL_{n};u,v) & ={\displaystyle \sum_{[[k]]\in\pi^{-1}[m]}\ \prod_{l,h=1}^{n}\frac{B_{l}^{\Gamma}(u^{h},v^{h})^{k_{l,h}}}{k_{l,h}!\,h^{k_{l,h}}}} & ,
\end{eqnarray*}
where $B_{l}^{\Gamma}(u,v):=E(\mathcal{X}_{\Gamma}^{irr}GL_{l};u,v)$.\end{thm}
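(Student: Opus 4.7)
The plan is to derive the stratum-wise formula first and then sum over strata to get the global one. The proof assembles three ingredients already established in the paper: the product decomposition of strata into symmetric products (Proposition~\ref{prop:sym-product}), the generating-function formula for symmetric products (Proposition~\ref{prop:Sym-Pexp}), and the partition expansion of the plethystic exponential (Lemma~\ref{lem:PExp-partitions}).

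First, fix $[m]=[1^{m_1}\cdots n^{m_n}]\in\mathcal{P}_n$. By Proposition~\ref{prop:sym-product} and the multiplicativity of $E$ (Proposition~\ref{prop:Properties-of-E}),
\[
E(\mathcal{X}_\Gamma^{[m]}GL_n;u,v)=\prod_{l=1}^n E\bigl(\sym^{m_l}(\mathcal{X}_\Gamma^{irr}GL_l);u,v\bigr).
\]
Next, Proposition~\ref{prop:Sym-Pexp} gives $\sum_{k\ge 0}E(\sym^k(\mathcal{X}_\Gamma^{irr}GL_l);u,v)\,t^k=\pexp(B_l^\Gamma(u,v)\,t)$. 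Applying Lemma~\ref{lem:PExp-partitions} with $g=B_l^\Gamma$, I can extract the coefficient of $t^{m_l}$ to obtain, for each $l$,
\[
E\bigl(\sym^{m_l}(\mathcal{X}_\Gamma^{irr}GL_l);u,v\bigr)=\sum_{[k^{(l)}]\in\mathcal{P}_{m_l}}\ \prod_{h=1}^{m_l}\frac{B_l^\Gamma(u^h,v^h)^{k^{(l)}_h}}{k^{(l)}_h!\,h^{k^{(l)}_h}}.
\]

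The key combinatorial step is then to recognise that distributing the product $\prod_l$ over these sums is precisely the enumeration of rectangular partitions of $n$ lying over $[m]$. Setting $k_{l,h}:=k^{(l)}_h$, the datum becomes a doubly-indexed array of non-negative integers with $m_l=\sum_h h\,k_{l,h}$ for every $l$; this is exactly the condition $\pi([[k]])=[m]$ in Definition~\ref{def:rectangular-partition}. Moreover $n=\sum_l l\,m_l=\sum_{l,h}l\,h\,k_{l,h}$, confirming $[[k]]\in\mathcal{RP}_n$. Thus
\[
E(\mathcal{X}_\Gamma^{[m]}GL_n;u,v)=\sum_{[[k]]\in\pi^{-1}[m]}\ \prod_{l,h=1}^n\frac{B_l^\Gamma(u^h,v^h)^{k_{l,h}}}{k_{l,h}!\,h^{k_{l,h}}},
\]
which is the second identity. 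The first identity then follows immediately by summing over $[m]\in\mathcal{P}_n$ using the additivity of $E$ on the stratification of Proposition~\ref{prop:locally-closed-GLn}, together with the disjoint-union decomposition $\mathcal{RP}_n=\bigsqcup_{[m]\in\mathcal{P}_n}\pi^{-1}[m]$. Alternatively, the first identity is a direct combination of Theorem~\ref{thm:A-PExpB} with Theorem~\ref{thm:partition-formula} applied to $a_n=A_n^\Gamma$, $b_n=B_n^\Gamma$.

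The main (very mild) obstacle is purely bookkeeping: matching the two-index combinatorics of $\mathcal{RP}_n$ with the nested sum over tuples of usual partitions of each $m_l$. Once the identification $k_{l,h}\leftrightarrow k^{(l)}_h$ and $\pi^{-1}[m]\leftrightarrow\prod_l\mathcal{P}_{m_l}$ is made explicit, the identity drops out with no further computation.
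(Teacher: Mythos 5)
Your proof is correct, and it uses exactly the same ingredients as the paper (Propositions~\ref{prop:sym-product} and~\ref{prop:Sym-Pexp}, Lemma~\ref{lem:PExp-partitions}, additivity of $E$ over the stratification of Proposition~\ref{prop:locally-closed-GLn}), but it reverses the logical order: the paper obtains the global formula first, as a purely formal consequence of Theorem~\ref{thm:A-PExpB} combined with Theorem~\ref{thm:partition-formula}, and then dismisses the stratum-wise identity as ``immediate from the above construction,'' whereas you establish the stratum-wise identity first and sum over $[m]\in\mathcal{P}_n$ via $\mathcal{RP}_n=\bigsqcup_{[m]}\pi^{-1}[m]$. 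Your organization is arguably preferable on one point: the second identity is the one the paper leaves essentially unjustified (one has to trace through the proof of Theorem~\ref{thm:A-PExpB} to see which rectangular partitions land in which stratum), and your explicit bijection between $\pi^{-1}[m]$ and $\prod_l\mathcal{P}_{m_l}$ via $k_{l,h}\leftrightarrow k^{(l)}_h$, with the check that $m_l=\sum_h h\,k_{l,h}$ is precisely the condition $\pi([[k]])=[m]$, supplies exactly the missing detail. One cosmetic remark: your inner product runs over $h\le m_l$ while the statement runs over $h\le n$, but since $k_{l,h}=0$ forces the extra factors to equal $1$, the two agree. You also correctly note the shortcut for the first identity (Theorem~\ref{thm:A-PExpB} plus Theorem~\ref{thm:partition-formula}), which is the paper's actual route.
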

\begin{proof}
The first formula is just Theorem \ref{thm:partition-formula} for
$B_{l}^{\Gamma}(u,v)$. The second formula is immediate from the above
construction, as the only terms which contribute to a partition, i.e,
to a given Young diagram, correspond to rectangular partitions whose
image under $\pi$ is that same Young diagram.\end{proof}
\begin{example}
\label{exa:GL2} The simplest case is $n=2$, $G=GL_{2}$, where we
get that the $E$-polynomial for the character variety is given by
(dropping the superscript $\Gamma$ in $A_{i}(u,v)$ and $B_{i}(u,v)$):
\[
A_{2}(u,v)=E(\mathcal{X}_{\Gamma}GL_{2};\,u,v)=\frac{1}{2}B_{1}(u^{2},v^{2})+\frac{1}{2}B_{1}(u,v)^{2}+B_{2}(u,v),
\]
since, for each stratum, we have: 
\begin{eqnarray*}
E(\mathcal{X}_{\Gamma}^{[2]}GL_{2};\,u,v) & = & B_{2}(u,v),\\
E(\mathcal{X}_{\Gamma}^{[1^{2}]}GL_{2};\,u,v) & = & \frac{1}{2}B_{1}(u^{2},v^{2})+\frac{1}{2}B_{1}(u,v)^{2}\;.
\end{eqnarray*}

\end{example}
In the following examples, for brevity, we assume that $E(\mathcal{X}_{\Gamma}^{irr}GL_{n};\,u,v)$
only depends on the product variable $x=uv$; the 2 variable $E$-polynomial
is treated in exactly the same way. 
\begin{example}
\label{exa:GL3} Next, with $n=3$, $G=GL_{3}$, and using the same
ordering as in Figure \ref{fig:RP3}, we get 5 terms: 
\[
A_{3}(x)=E_{x}(\mathcal{X}_{\Gamma}GL_{3})=B_{3}(x)+B_{2}(x)B_{1}(x)+\frac{B_{1}(x^{3})}{3}+\frac{B_{1}(x^{2})B_{1}(x)}{2}+\frac{B_{1}(x)^{3}}{6},
\]
where the first term corresponds to $E_{x}(\mathcal{X}_{\Gamma}^{[3]}GL_{3})$,
the second to $E_{x}(\mathcal{X}_{\Gamma}^{[1\,2]}GL_{3})$, and remaining
3 terms to $E_{x}(\mathcal{X}_{\Gamma}^{[1^{3}]}GL_{3})$. 
\end{example}

\begin{example}
\label{exa:GL4} In a similar way, for $n=4$, $G=GL_{4}$, we obtain
the polynomials for each strata: 
\begin{eqnarray*}
E_{x}(\mathcal{X}_{\Gamma}^{[4]}GL_{4}) & = & B_{4}(x),\\
E_{x}(\mathcal{X}_{\Gamma}^{[1\;3]}GL_{4}) & = & B_{3}(x)B_{1}(x),\\
E_{x}(\mathcal{X}_{\Gamma}^{[2^{2}]}GL_{4}) & = & \frac{B_{2}(x)^{2}}{2}+\frac{B_{2}(x^{2})}{2},\\
E_{x}(\mathcal{X}_{\Gamma}^{[1^{2}2]}GL_{4}) & = & \frac{B_{2}(x)B_{1}(x^{2})}{2}+\frac{B_{2}(x)B_{1}(x)^{2}}{2},\\
E_{x}(\mathcal{X}_{\Gamma}^{[1^{4}]}GL_{4}) & = & \frac{B_{1}(x^{4})}{4}+\frac{B_{1}(x^{3})B_{1}(x)}{3}+\frac{B_{1}(x^{2})^{2}}{8}+\frac{B_{1}(x^{2})B_{1}(x)^{2}}{4}+\frac{B_{1}(x)^{4}}{24}\;,
\end{eqnarray*}
yielding $A_{4}(x)=E_{x}(\mathcal{X}_{\Gamma}GL_{4})$ as the sum
of these 5 strata (which comprise the $11$ terms coming from the
rectangular partitions in Figure \ref{fig:RP4}). 
\end{example}

\section{Some Explicit Computations, for low $n$}

\label{section:other_groups} In this last section, we collect several
explicit computations of $E$-polynomials of $GL_{n}$-character varieties,
and their partition type strata, for some classes of groups $\Gamma$,
including surface groups, free groups and torus knot groups. We concentrate
on the two extreme cases of these strata: the abelian stratum and
the irreducible stratum. The abelian case allows some results for
general $n$, but the $E$-polynomials of irreducible character varieties
are typically very difficult to calculate; however, for low values
of $n$, we can use some previous computations of $E$-polynomials
(obtained in most cases by point counting over finite fields, see
for example \cite{MR,BH}) to determine the $E$-polynomials and the
Euler characteristics of the irreducible character varieties, yelding
new results for the ireducible stratum. As mentioned before the irreducible
locus coincides, in many cases, with the smooth locus of the full
character variety.

We also illustrate our methods with a simple computation of the $E$-polynomial
of the Cartan brane inside the moduli space of $GL_{n}$-Higgs bundles,
an object of interest in the geometric Langlands programme (see \cite{FPN}).

\subsection{$E$-polynomials of the abelian strata}

\label{subsection:abelian}

We start by examining representations of finitely presented abelian
groups. The following result, recently obtained in \cite{FS}, deals
with the group $\Gamma=\mathbb{Z}^{r}$. 
\begin{thm}
\label{thm:-Abelian-case} Let $r,n\in\mathbb{N}$. Then $\mathcal{X}_{\mathbb{Z}^{r}}GL_{n}$
is of Hodge-Tate type and 
\[
E_{x}(\mathcal{X}_{\mathbb{Z}^{r}}GL_{n})=\sum_{[k]\in\mathcal{P}_{n}}\prod_{j=1}^{n}\frac{(x^{j}-1)^{r\,k_{j}}}{k_{j}!\ j^{k_{j}}}.
\]
\end{thm}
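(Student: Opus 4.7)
The plan is to reduce everything to the known case of $1$-dimensional representations, and then invoke the general partition/plethystic machinery already established.

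First I would determine the irreducible character varieties $\mathcal{X}_{\mathbb{Z}^r}^{irr}GL_n$. A representation $\rho:\mathbb{Z}^r\to GL_n$ is an $r$-tuple of pairwise commuting elements of $GL_n$. If $\rho$ is irreducible, then each $\rho(e_i)$ commutes with the whole image, so by Schur's lemma each $\rho(e_i)$ is scalar, forcing $n=1$. Hence
\[
\mathcal{X}_{\mathbb{Z}^r}^{irr}GL_n = \emptyset \quad \text{for } n\geq 2, \qquad \mathcal{X}_{\mathbb{Z}^r}^{irr}GL_1 = \hom(\mathbb{Z}^r,\mathbb{C}^*) \cong (\mathbb{C}^*)^r.
\]
In the notation of Theorem~\ref{thm:A-PExpB} this yields $B_1^{\mathbb{Z}^r}(u,v)=E(\mathbb{C}^*)^r=(uv-1)^r$ and $B_n^{\mathbb{Z}^r}=0$ for $n\geq 2$, so in particular each $B_n^{\mathbb{Z}^r}$ depends only on $x=uv$.

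Next I would establish Hodge-Tate type. By Proposition~\ref{prop:locally-closed-GLn} together with Proposition~\ref{prop:sym-product}, the only non-empty stratum is $[k]=[1^n]$, and it satisfies
\[
\mathcal{X}_{\mathbb{Z}^r}GL_n \;=\; \mathcal{X}_{\mathbb{Z}^r}^{[1^n]}GL_n \;\cong\; \sym^n\bigl((\mathbb{C}^*)^r\bigr).
\]
Since $(\mathbb{C}^*)^r$ is of Hodge-Tate type and the mixed Hodge structure on the cohomology of a symmetric product is the $S_n$-invariant part of that of the Cartesian product, $\sym^n((\mathbb{C}^*)^r)$ is of Hodge-Tate type as well; hence so is $\mathcal{X}_{\mathbb{Z}^r}GL_n$, and we may use the one-variable notation $E_x$.

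Finally, with only $B_1^{\mathbb{Z}^r}(x)=(x-1)^r$ nonzero, Corollary~\ref{cor:A-PExpB} gives
\[
\sum_{n\geq 0} E_x(\mathcal{X}_{\mathbb{Z}^r}GL_n)\,t^n \;=\; \pexp\bigl((x-1)^r\,t\bigr),
\]
and applying Lemma~\ref{lem:PExp-partitions} (in its one-variable form, with $g(x)=(x-1)^r$) and comparing coefficients of $t^n$ produces exactly
\[
E_x(\mathcal{X}_{\mathbb{Z}^r}GL_n) \;=\; \sum_{[k]\in \mathcal{P}_n} \prod_{j=1}^{n}\frac{(x^j-1)^{r\,k_j}}{k_j!\, j^{k_j}}.
\]
There is no real obstacle here; the only non-routine point is the Schur-lemma reduction that kills $B_n^{\mathbb{Z}^r}$ for $n\geq 2$, after which everything follows mechanically from the formalism of Section~\ref{section:linearcase}.
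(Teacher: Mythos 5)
Your proposal is correct and follows essentially the same route as the paper: identify that the only irreducible $GL_n$-representations of the abelian group $\mathbb{Z}^r$ occur for $n=1$, so that $B_1^{\mathbb{Z}^r}(x)=(x-1)^r$ and $B_n^{\mathbb{Z}^r}=0$ for $n\geq 2$, then feed this into Theorem~\ref{thm:A-PExpB} and expand via Lemma~\ref{lem:PExp-partitions}. The paper handles the Hodge--Tate claim by deferring to \cite{FS} and Proposition~\ref{prop:reducible_part} (symmetric products of balanced varieties are balanced, via Cheah's formula), which is the same observation you make directly with the $S_n$-invariance of the mixed Hodge structure on $\sym^n((\mathbb{C}^*)^r)$.
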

\begin{proof}
For the Hodge-Tate statement, see \cite{FS} or Proposition \ref{prop:reducible_part},
below. In that article, the formula is shown as a consequence of computing
the mixed Hodge-Deligne polynomial of $\mathcal{X}_{\mathbb{Z}^{r}}GL_{n}$.
For the $E$-polynomial we can also apply our main result obtaining
a simpler proof. Being an abelian group, every irreducible representation
of $\mathbb{Z}^{r}$ is one dimensional, and for $n=1$ we have 
\[
E_{x}(\mathcal{X}_{\mathbb{Z}^{r}}GL_{1})=E_{x}(\hom(\mathbb{Z}^{r},\mathbb{C}^{*}))=E_{x}((\mathbb{C}^{*})^{r})=(x-1)^{r}.
\]
Then, the generating series $\sum_{n\in\mathbb{N}}B_{n}^{\mathbb{Z}^{r}}(x)t^{n}$
of the irreducible loci reduces to 
\[
B_{1}^{\mathbb{Z}^{r}}(x)t=(x-1)^{r}t,
\]
as $B_{1}^{\mathbb{Z}^{r}}(x)=E_{x}(\mathcal{X}_{\mathbb{Z}^{r}}^{irr}GL_{1})=E_{x}(\mathcal{X}_{\mathbb{Z}^{r}}GL_{1})$,
and $B_{n}^{\mathbb{Z}^{r}}(x)\equiv0$ for all $n>1$. Then, the
generating series for $E_{x}(\mathcal{X}_{\mathbb{Z}^{r}}GL_{n})$,
by Theorem \ref{thm:A-PExpB}, is 
\[
\sum_{n\geq0}A_{n}^{\mathbb{Z}^{r}}(x)t^{n}=\pexp(B_{1}^{\mathbb{Z}^{r}}(x)t)=\sum_{n\geq0}\,\left(\sum_{[k]\in\mathcal{P}_{n}}\prod_{j=1}^{n}\frac{B_{1}^{\mathbb{Z}^{r}}(x^{j})^{k_{j}}}{k_{j}!\ j^{k_{j}}}\right)t^{n},
\]
(where the last equality comes from Lemma \ref{lem:PExp-partitions})
which immediately gives the result for $A_{n}^{\mathbb{Z}^{r}}(x)=E_{x}(\mathcal{X}_{\mathbb{Z}^{r}}GL_{n})$. 
\end{proof}
Now, let us consider a general finitely presented group $\Gamma$,
with abelianization 
\[
\Gamma_{Ab}:=\Gamma/[\Gamma,\Gamma],
\]
where $[\Gamma,\Gamma]$ is the normal subgroup generated by all commutators
in $\Gamma$ (words of the form $aba^{-1}b^{-1}$, $a,b\in\Gamma$).
It is well known that $\Gamma_{Ab}\cong\mathbb{Z}^{r}\oplus F_{N},$
where $r\in\mathbb{N}_{0}$ is called the rank of $\Gamma_{Ab}$ and
the torsion $F_{N}$ is a finite abelian group of order $N$. It is
also clear that we have 
\begin{equation}
\mathcal{R}_{\Gamma}GL_{1}=\mathcal{R}_{\Gamma}^{irr}GL_{1}\cong\mathcal{R}_{\Gamma_{Ab}}GL_{1},\label{eq:R-ab}
\end{equation}
and the same applies to the corresponding character varieties. More
generally, we have the following Lemma, that justifies calling $\mathcal{X}_{\Gamma}^{[1^{n}]}GL_{n}$
the \emph{abelian stratum}. 
\begin{lem}
\label{lem:Abelian-all1s} For every $n\in\mathbb{N}$, the abelian
stratum is isomorphic to the character variety of the abelianization
of $\Gamma$: 
\[
\mathcal{X}_{\Gamma}^{[1^{n}]}GL_{n}\cong\mathcal{X}_{\Gamma_{Ab}}GL_{n}.
\]
\end{lem}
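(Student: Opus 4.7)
The plan is to identify both sides of the claimed isomorphism with the same symmetric product, namely $\operatorname{Sym}^n(\hom(\Gamma_{Ab},\mathbb{C}^*))$, by applying Proposition \ref{prop:sym-product} on each side and then using that irreducible complex representations of an abelian group are one-dimensional.

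First, I would apply Proposition \ref{prop:sym-product} to the partition $[1^n]\in\mathcal{P}_n$, which immediately gives
\[
\mathcal{X}_{\Gamma}^{[1^n]}GL_n\ \cong\ \operatorname{Sym}^n\bigl(\mathcal{X}_{\Gamma}^{irr}GL_1\bigr).
\]
Next, since every one-dimensional representation $\Gamma\to \mathbb{C}^*$ is automatically irreducible and factors through the abelianization, equation (\ref{eq:R-ab}) yields $\mathcal{X}_{\Gamma}^{irr}GL_1=\mathcal{X}_{\Gamma}GL_1\cong\mathcal{X}_{\Gamma_{Ab}}GL_1\cong\hom(\Gamma_{Ab},\mathbb{C}^*)$ as algebraic varieties.

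The crux of the argument is to show that the right-hand side $\mathcal{X}_{\Gamma_{Ab}}GL_n$ coincides with its own abelian stratum, i.e., $\mathcal{X}_{\Gamma_{Ab}}GL_n=\mathcal{X}_{\Gamma_{Ab}}^{[1^n]}GL_n$. For this, I would invoke Schur's lemma: for an irreducible representation $\rho\colon\Gamma_{Ab}\to GL(V)$, each operator $\rho(a)$ commutes with all of $\rho(\Gamma_{Ab})$, hence $\rho(a)\in\operatorname{End}_{\Gamma_{Ab}}(V)=\mathbb{C}\cdot\operatorname{id}$, so every line is invariant, forcing $\dim V=1$. Therefore every polystable (i.e., completely reducible) representation of $\Gamma_{Ab}$ into $GL_n$ is conjugate to a direct sum of $n$ characters, so it is $[1^n]$-polystable. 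Applying Proposition \ref{prop:sym-product} again, now to $\Gamma_{Ab}$, we obtain
\[
\mathcal{X}_{\Gamma_{Ab}}GL_n\ =\ \mathcal{X}_{\Gamma_{Ab}}^{[1^n]}GL_n\ \cong\ \operatorname{Sym}^n\bigl(\mathcal{X}_{\Gamma_{Ab}}^{irr}GL_1\bigr).
\]

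Chaining the two displayed isomorphisms together with $\mathcal{X}_{\Gamma}^{irr}GL_1\cong\mathcal{X}_{\Gamma_{Ab}}^{irr}GL_1$ produces the desired $\mathcal{X}_{\Gamma}^{[1^n]}GL_n\cong\mathcal{X}_{\Gamma_{Ab}}GL_n$. The only step requiring genuine verification, rather than a citation, is the Schur-type reduction that collapses all partition-type strata of $\mathcal{X}_{\Gamma_{Ab}}GL_n$ into the abelian one; everything else is bookkeeping, since the surjection $\Gamma\twoheadrightarrow\Gamma_{Ab}$ induces a morphism of affine representation schemes that is $GL_n$-equivariant and identifies the completely reducible loci, after which the symmetric product identification is automatic from Proposition \ref{prop:sym-product}.
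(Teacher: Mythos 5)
Your proof is correct, but it is organized differently from the paper's. The paper argues directly at the level of representation varieties: it shows the two inclusions $\mathcal{R}_{\Gamma_{Ab}}^{ps}GL_{n}\subset\mathcal{R}_{\Gamma}^{[1^{n}]}GL_{n}$ (pulling back along $\Gamma\twoheadrightarrow\Gamma_{Ab}$ and using that irreducible representations of abelian groups are one-dimensional) and $\mathcal{R}_{\Gamma}^{[1^{n}]}GL_{n}\subset\mathcal{R}_{\Gamma_{Ab}}^{ps}GL_{n}$ (a $[1^{n}]$-polystable representation is simultaneously diagonalizable, hence sends all commutators to the identity and factors through $\Gamma_{Ab}$), and then passes to the GIT quotients. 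You instead work entirely at the quotient level, applying Proposition \ref{prop:sym-product} on both sides so as to identify each with $\sym^{n}(\hom(\Gamma_{Ab},\mathbb{C}^{*}))$; the inputs you need are the same Schur-type fact plus the observation that one-dimensional representations factor through the abelianization, but you only invoke the latter for the $GL_{1}$ building blocks, since Proposition \ref{prop:sym-product} does the assembly into rank $n$ for you. The trade-off: your argument is shorter given the machinery already established in the paper, and it cleanly isolates the one step needing genuine verification (the collapse of $\mathcal{X}_{\Gamma_{Ab}}GL_{n}$ onto its $[1^{n}]$ stratum, i.e.\ the emptiness of all other strata for an abelian group); the paper's version is more self-contained and additionally produces the isomorphism $\mathcal{R}_{\Gamma}^{[1^{n}]}GL_{n}\cong\mathcal{R}_{\Gamma_{Ab}}^{ps}GL_{n}$ of polystable loci upstairs, which your quotient-level identification does not directly give (though it is not needed for the statement of the Lemma).
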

\begin{proof}
This is a consequence of the analogous isomorphism of polystable loci:
\[
\mathcal{R}_{\Gamma}^{[1^{n}]}GL_{n}\cong\mathcal{R}_{\Gamma_{Ab}}^{ps}GL_{n},
\]
which can be shown as follows. Every polystable representation of
$\Gamma_{Ab}$ into $GL_{n}$ gives, by composition with the quotient
$\Gamma\to\Gamma_{Ab},$ a representation of $\Gamma$ which belongs
to the $[1^{n}]$ stratum, since the only irreducible representations
of an abelian group are one-dimensional. So, $\mathcal{R}_{\Gamma_{Ab}}^{ps}GL_{n}\subset\mathcal{R}_{\Gamma}^{[1^{n}]}GL_{n}$,
and the inclusion is a morphism of algebraic varieties. Conversely,
for a $[1^{n}]$-polystable representation of $\Gamma$ into $GL_{n}$,
all the generators of $\Gamma$ are sent to diagonal matrices (in
some basis, being direct sums of one-dimensional representations);
so, all commutators (the kernel of $\Gamma\to\Gamma_{Ab}$) are sent
to the identity. Thus, it defines a unique $GL_{n}$ representation
of $\Gamma_{Ab}$. \end{proof}
\begin{prop}
\label{prop:reducible_part}Let $\Gamma$ be a finitely generated
group with abelianization $\Gamma_{Ab}=\mathbb{Z}^{r}\oplus F_{N}$,
with $N=|F_{N}|$. Then, the abelian stratum is of Hodge-Tate type,
and its E-polynomial satisfies: 
\[
E_{x}(\mathcal{X}_{\Gamma}^{[1^{n}]}GL_{n})=\sum_{[k]\in\mathcal{P}_{n}}\prod_{j=1}^{n}\frac{N^{k_{j}}(x^{j}-1)^{rk_{j}}}{k_{j}!\ j^{k_{j}}}.
\]
\end{prop}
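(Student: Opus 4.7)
The plan is to combine Lemma \ref{lem:Abelian-all1s} (or equivalently Proposition \ref{prop:sym-product} applied to the partition $[1^{n}]$) with the plethystic machinery already developed. First I would observe that, by Proposition \ref{prop:sym-product}, the abelian stratum is the symmetric product
\[
\mathcal{X}_{\Gamma}^{[1^{n}]}GL_{n}\cong\sym^{n}(\mathcal{X}_{\Gamma}^{irr}GL_{1}),
\]
so the whole problem reduces to computing $E_{x}(\mathcal{X}_{\Gamma}^{irr}GL_{1})$ and then feeding it into Proposition \ref{prop:Sym-Pexp}.

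The base case is straightforward: every one-dimensional representation factors through $\Gamma_{Ab}$, and by \eqref{eq:R-ab} one gets
\[
\mathcal{X}_{\Gamma}^{irr}GL_{1}=\mathcal{X}_{\Gamma}GL_{1}\cong\hom(\mathbb{Z}^{r}\oplus F_{N},\mathbb{C}^{*})\cong(\mathbb{C}^{*})^{r}\times\hom(F_{N},\mathbb{C}^{*}).
\]
Since $F_{N}$ is a finite abelian group of order $N$, its character group consists of $N$ reduced points, so the variety is a disjoint union of $N$ copies of $(\mathbb{C}^{*})^{r}$. This is manifestly of Hodge-Tate type, and by the additive and multiplicative properties of $E$ (Proposition \ref{prop:Properties-of-E}) together with Example \ref{exa:E}, its $E$-polynomial is
\[
E_{x}(\mathcal{X}_{\Gamma}^{irr}GL_{1})=N(x-1)^{r}.
\]

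Next I would assemble the generating series. Applying Proposition \ref{prop:Sym-Pexp} to $X=\mathcal{X}_{\Gamma}^{irr}GL_{1}$ gives
\[
\sum_{n\geq0}E_{x}\bigl(\sym^{n}(\mathcal{X}_{\Gamma}^{irr}GL_{1})\bigr)\,t^{n}=\pexp\bigl(N(x-1)^{r}\,t\bigr),
\]
and then Lemma \ref{lem:PExp-partitions} (with the single-variable $g(x)=N(x-1)^{r}$) expands the right-hand side as
\[
\sum_{n\geq0}\left(\sum_{[k]\in\mathcal{P}_{n}}\prod_{j=1}^{n}\frac{N^{k_{j}}(x^{j}-1)^{rk_{j}}}{k_{j}!\,j^{k_{j}}}\right)t^{n}.
\]
Equating coefficients of $t^{n}$ on both sides yields the claimed closed formula.

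Finally, for the Hodge-Tate statement, I would note that each symmetric power $\sym^{n}(\mathcal{X}_{\Gamma}^{irr}GL_{1})$ inherits Hodge-Tate type from the fact that $\mathcal{X}_{\Gamma}^{irr}GL_{1}$ is itself a disjoint union of Hodge-Tate varieties: the Cheah formula used in the proof of Proposition \ref{prop:Sym-Pexp} produces a mixed Hodge polynomial all of whose nonzero Hodge-Deligne numbers $h^{k,p,q}$ satisfy $p=q$, so the same holds for each $\sym^{n}(\mathcal{X}_{\Gamma}^{irr}GL_{1})$, and hence for $\mathcal{X}_{\Gamma}^{[1^{n}]}GL_{n}$ via the isomorphism of Proposition \ref{prop:sym-product}. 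The main obstacle, if any, is essentially notational: one must match the single-variable plethystic expansion to the multi-index form of the partition sum, and verify that the identification $\mathcal{X}_{\Gamma}^{[1^{n}]}GL_{n}\cong\sym^{n}(\mathcal{X}_{\Gamma}^{irr}GL_{1})$ genuinely takes place at the level of algebraic varieties (not merely as sets), so that the $E$-polynomial calculation applies; but this is precisely the content already delivered by Proposition \ref{prop:sym-product}.
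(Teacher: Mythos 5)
Your proposal is correct and follows essentially the same route as the paper: identify the abelian stratum with $\sym^{n}(\mathcal{X}_{\Gamma}^{irr}GL_{1})$, compute $E_{x}(\mathcal{X}_{\Gamma}^{irr}GL_{1})=N(x-1)^{r}$ from the abelianization, and expand the plethystic exponential over partitions, with the Hodge--Tate claim reduced via Cheah's formula to the $n=1$ case. The only cosmetic difference is that you invoke Proposition \ref{prop:Sym-Pexp} together with Lemma \ref{lem:PExp-partitions} for the final expansion, whereas the paper reads off the same partition sum from Theorem \ref{thm:individual-strata} by restricting to single-column rectangular partitions; these are the same computation.
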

\begin{proof}
It follows from a formula of J. Cheah (\cite{Ch}), that symmetric
products of balanced varieties are balanced (see also \cite{Sil}).
Therefore, since 
\[
\mathcal{X}_{\Gamma}^{[1^{n}]}GL_{n}\cong\sym^{n}\mathcal{X}_{\Gamma}GL_{1},
\]
we only need to show that $\mathcal{X}_{\Gamma}GL_{1}$ is of Hodge-Tate
type. Clearly, 
\[
\mathcal{X}_{\Gamma}GL_{1}=\mathcal{X}_{\Gamma}^{irr}GL_{1}\cong\mathcal{R}_{\Gamma}GL_{1}\cong\mathcal{R}_{\Gamma_{Ab}}\mathbb{C}^{*}=\hom(\mathbb{Z}^{r}\oplus F_{N},\mathbb{C}^{*})
\]
Since $F_{N}$ is an abelian group of order $N\in\mathbb{N}$, it
is a direct sum of cyclic groups $\mathbb{Z}_{m}$, $m\in\mathbb{N}$.
The set $\hom(\mathbb{Z}_{m},\mathbb{C}^{*})$ is in bijection with
the $m^{th}$ roots of unity (sending the generator of $\mathbb{Z}_{m}$
to each root). Therefore, $\hom(F_{N},\mathbb{C}^{*})$ has $N$ elements
and 
\[
\hom(\mathbb{Z}^{r}\oplus F_{N},\mathbb{C}^{*})\cong\hom(\mathbb{Z}^{r},\mathbb{C}^{*})\times\hom(F_{N},\mathbb{C}^{*})\cong(\mathbb{C}^{*})^{r}\times F_{N},
\]
which is clearly of Hodge-Tate type and we get: 
\[
B_{1}^{\Gamma}(x)=E_{x}(\mathcal{X}_{\Gamma}^{irr}GL_{1})=N(x-1)^{r}.
\]
Finally, using Theorem \ref{thm:individual-strata}, the abelian stratum
$[1^{n}]$ is obtained from all rectangular partitions with a single
column (i.e. $k_{l,h}=0$ unless $l=1$, see Definition \ref{def:rectangular-partition}),
and this corresponds to usual partitions $[k]\in\mathcal{P}_{n}$.
So, we get: 
\begin{equation}
E_{x}(\mathcal{X}_{\Gamma}^{[1^{n}]}GL_{n})=\sum_{[k]\in\mathcal{P}_{n}}\prod_{j=1}^{n}\frac{B_{1}^{\Gamma}(x^{j})^{k_{j}}}{k_{j}!\ j^{k_{j}}}=\sum_{[k]\in\mathcal{P}_{n}}\prod_{j=1}^{n}\frac{N^{k_{j}}(x^{j}-1)^{rk_{j}}}{k_{j}!\ j^{k_{j}}}.\label{eq:abelian-stratum}
\end{equation}
as wanted. 
\end{proof}

\subsection{The Cartan brane of the moduli space of Higgs bundles}

\label{subsection:cartan_brane} We now illustrate the method of computation
of $E$-polynomials in a non-balanced case: the Cartan brane in the
moduli space of Higgs bundles.

The non-abelian Hodge correspondence (c.f. \cite{Sim}) establishes
a homeomorphism between $\mathcal{X}_{\Gamma_{g}}GL_{n}$, for the
surface group $\Gamma_{g}=\pi_{1}(\Sigma_{g})$ (where $\Sigma_{g}$
is a Riemann surface of genus $g$) and the moduli space $\mathcal{M}_{n}\Sigma_{g}$
of rank $n$ Higgs bundles $(E,\varphi)$ of degree zero, over $\Sigma_{g}$.
This is a singular algebraic variety, whose singular stratification
is again given in terms of partitions. More precisely, thinking of
the moduli space as parametrizing \emph{polystable Higgs bundles},
we have a disjoint union of locally closed subvarieties 
\[
\mathcal{M}_{n}\Sigma_{g}=\bigsqcup_{[k]\in\mathcal{P}_{n}}\mathcal{M}_{[k]}\Sigma_{g},
\]
where, for a partition $[k]=[1^{k_{1}}\cdots n^{k_{n}}]$, $\mathcal{M}_{[k]}\Sigma_{g}$
is the locus of Higgs bundles of the form: 
\[
\bigoplus_{j=1}^{n}(E_{j},\varphi_{j})
\]
where each $(E_{j},\varphi_{j})$ is, in turn, a direct sum of $k_{j}>0$
\emph{stable} Higgs bundles of rank $j$ (and degree zero), for $j=1,\cdots,n$
(again, if some $k_{j}=0$, the Higgs summand $(E_{j},\varphi_{j})$
is not present in the direct sum). Note that non-abelian Hodge correspondence
matches precisely the strata labelled by the same partitions. That
is, we have homeomorphisms 
\[
\mathcal{M}_{[k]}\Sigma_{g}\thickapprox\mathcal{X}_{\Gamma_{g}}^{[k]}GL_{n},
\]
since \emph{stable} Higgs bundles of a given rank $m\leq n$, correspond
to \emph{irreducible} representations of $\Gamma_{g}$ into $GL_{m}$.
However, these homeomorphisms are not holomorphic (on their smooth
loci): in fact, the Hodge structure is \emph{pure} on $\mathcal{M}_{n}\Sigma_{g}$
and \emph{mixed} (of Hodge-Tate type) on $\mathcal{X}_{\Gamma_{g}}^{[k]}GL_{n}$.

Nevertheless, we can compute the $E$-polynomial of the is the lowest
dimensional stratum, $\mathcal{M}_{[1^{n}]}\Sigma_{g}$, called the
Cartan brane in \cite{FPN}, as follows. This stratum consists of
direct sums of Higgs line-bundles of degree zero: 
\[
(L_{1},\varphi_{1})\oplus\cdots\oplus(L_{n},\varphi_{n}),
\]
and so we have: 
\[
\mathcal{M}_{[1^{n}]}\Sigma_{g}\cong\sym^{n}\mathcal{M}_{1}\Sigma_{g},
\]
where $\mathcal{M}_{1}\Sigma_{g}$ is isomorphic to the cotangent
bundle of the Jacobian of $\Sigma_{g}$, $T^{*}(J\Sigma_{g})$. Since
we are dealing with symmetric products, the analogous method for obtaining
the $E$-polynomial of $\mathcal{X}_{\Gamma_{g}}^{[1^{n}]}GL_{n}$,
as in Proposition \ref{prop:reducible_part}, gives: 
\[
E(\mathcal{M}_{[1^{n}]}\Sigma_{g};\,u,v)=\sum_{[k]\in\mathcal{P}_{n}}\prod_{j=1}^{n}\frac{B_{1}(u^{j},v^{j})^{k_{j}}}{k_{j}!\ j^{k_{j}}},
\]
where 
\[
B_{1}(u,v)=E(\mathcal{M}_{1}\Sigma_{g};\,u,v)=E(T^{*}J\Sigma_{g};\,u,v)=(uv)^{g}(1-u)^{g}(1-v)^{g},
\]
because the Jacobian $J\Sigma_{g}$ of $\Sigma_{g}$ (an abelian variety
of dimension $g$), has a pure Hodge structure and well known cohomology
ring (the cotangent bundle of $J\Sigma_{g}$ is trivial, and the term
$(uv)^{g}$ comes from the use of compactly supported cohomology).
We have thus shown the following. 
\begin{thm}
The $E$-polynomial of the Cartan brane of the moduli space of rank
$n$ Higgs bundles of degree zero is given by: 
\[
E(\mathcal{M}_{[1^{n}]}\Sigma_{g};\,u,v)=\sum_{[k]\in\mathcal{P}_{n}}\prod_{j=1}^{n}\frac{\Big((u^{j}-u^{2j})(v^{j}-v^{2j})\Big)^{k_{j}g}}{k_{j}!\ j^{k_{j}}}.
\]

\end{thm}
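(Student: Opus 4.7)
My plan is to run the same plethystic argument used in Proposition~\ref{prop:reducible_part}, but now directly on the Higgs side, bypassing the non-abelian Hodge correspondence (which is only a homeomorphism and does not preserve the mixed Hodge structure). The starting point is the algebraic isomorphism
\[
\mathcal{M}_{[1^{n}]}\Sigma_{g}\ \cong\ \sym^{n}\mathcal{M}_{1}\Sigma_{g},
\]
already observed in the preceding paragraph: a point of $\mathcal{M}_{[1^{n}]}\Sigma_{g}$ is an unordered $n$-tuple of Higgs line bundles $(L_{i},\varphi_{i})$ of degree zero. By Proposition~\ref{prop:Sym-Pexp} applied with $X=\mathcal{M}_{1}\Sigma_{g}$, it then suffices to compute $E(\mathcal{M}_{1}\Sigma_{g};u,v)$ and to expand the resulting plethystic exponential via Lemma~\ref{lem:PExp-partitions}.

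Next I would identify $\mathcal{M}_{1}\Sigma_{g}\cong T^{*}J\Sigma_{g}$, the cotangent bundle of the Jacobian. Since $J\Sigma_{g}$ is an abelian variety, its tangent (and cotangent) bundle is holomorphically trivial, so
\[
T^{*}J\Sigma_{g}\ \cong\ J\Sigma_{g}\times\mathbb{C}^{g}
\]
as algebraic varieties. The Jacobian has pure Hodge structure with $H^{*}(J\Sigma_{g})\cong\Lambda^{*}H^{1}(J\Sigma_{g})$ and $h^{1,0}=h^{0,1}=g$, hence
\[
E(J\Sigma_{g};u,v)\ =\ \sum_{p,q}(-1)^{p+q}\binom{g}{p}\binom{g}{q}u^{p}v^{q}\ =\ (1-u)^{g}(1-v)^{g}.
\]
Combining this with $E(\mathbb{C}^{g};u,v)=(uv)^{g}$ via the K\"unneth/product property in Proposition~\ref{prop:Properties-of-E} yields
\[
B_{1}(u,v)\ :=\ E(\mathcal{M}_{1}\Sigma_{g};u,v)\ =\ (uv)^{g}(1-u)^{g}(1-v)^{g}.
\]

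Finally, Lemma~\ref{lem:PExp-partitions} applied to $g(u,v)=B_{1}(u,v)$ gives
\[
\sum_{n\geq0}E(\sym^{n}\mathcal{M}_{1}\Sigma_{g};u,v)\,t^{n}\ =\ \pexp\bigl(B_{1}(u,v)\,t\bigr)\ =\ \sum_{n\geq 0}\Bigl(\sum_{[k]\in\mathcal{P}_{n}}\prod_{j=1}^{n}\frac{B_{1}(u^{j},v^{j})^{k_{j}}}{k_{j}!\ j^{k_{j}}}\Bigr)t^{n},
\]
so it remains only to simplify $B_{1}(u^{j},v^{j})^{k_{j}}$. A direct computation
\[
B_{1}(u^{j},v^{j})\ =\ (u^{j}v^{j})^{g}(1-u^{j})^{g}(1-v^{j})^{g}\ =\ \bigl((u^{j}-u^{2j})(v^{j}-v^{2j})\bigr)^{g}
\]
gives exactly the exponent $k_{j}g$ appearing in the statement, and equating the coefficient of $t^{n}$ finishes the proof.

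The only genuinely subtle point is the computation of $B_{1}$: it relies on the triviality of $T^{*}J\Sigma_{g}$ as an algebraic variety (so that K\"unneth applies in the compactly supported $E$-polynomial), on the purity of the Hodge structure of $J\Sigma_{g}$, and on the passage from ordinary to compactly supported cohomology (which accounts for the factor $(uv)^{g}$ coming from the fiber $\mathbb{C}^{g}$). Once these are in place, everything else is formal manipulation of the plethystic exponential already used throughout Section~\ref{section:linearcase}.
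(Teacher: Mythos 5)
Your argument is correct and follows essentially the same route as the paper: the identification $\mathcal{M}_{[1^{n}]}\Sigma_{g}\cong\sym^{n}\mathcal{M}_{1}\Sigma_{g}$ with $\mathcal{M}_{1}\Sigma_{g}\cong T^{*}J\Sigma_{g}$, the computation $E(T^{*}J\Sigma_{g};u,v)=(uv)^{g}(1-u)^{g}(1-v)^{g}$ using purity of the Jacobian and triviality of its cotangent bundle, and the expansion of $\pexp(B_{1}(u,v)t)$ via Lemma~\ref{lem:PExp-partitions}. Your write-up merely makes explicit the K\"unneth step and the binomial computation of $E(J\Sigma_{g})$ that the paper leaves implicit.
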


\subsection{$E$-polynomials of irreducible character varieties}

In \cite{MR}, Mozgovoy and Reineke obtained formulae for the $E$-polynomials
of $GL_{n}$-character varieties of the free group of rank $r$, $\Gamma=F_{r}$.
Recently, for $n=2$ and $3$, Baraglia and Hekmati derived explicit
formulae for the $E$-polynomials of $GL_{n}$-character varieties
of surface groups (both the orientable and non-orientable cases) and
for torus knot groups (as well as the cases $G=SL_{n}$, for $n=2,3$
\cite{BH}). These results were obtained by counting the number of
points of a spreading out of these character varieties, over finite
fields, and rely on a theorem of N. Katz (\cite[Appendix]{HRV1}).
Briefly, the later proves that, if there is unique polynomial that
encodes the number of points, over every finite field, of a spreading
out of a given complex variety $X$, then this polynomial agrees with
the $E$-polynomial of $X$.

In this final subsection, we use some of those formulae and our Theorem
\ref{thm:A-PExpB}, to determine $E$-polynomials of the corresponding
\emph{irreducible} character varieties, deriving $E(\mathcal{X}_{\Gamma}^{irr}GL_{n})$
from the knowledge of $E(\mathcal{X}_{\Gamma}GL_{n})$. Explicit expressions
are given in Theorems \ref{thm:E2} and \ref{thm:GL3-orient} below,
and are new results, to the best of our knowledge. As a consequence,
we obtain the numbers of irreducible components and Euler characteristics
of $\mathcal{X}_{\Gamma}^{irr}GL_{n}$.

We consider the following classes of groups $\Gamma$. If $\Sigma_{g}$
is a compact surface without boundary of genus $g\geq1$, its fundamental
group can be written as 
\[
\Gamma_{g}:=\pi_{1}(\Sigma_{g})=\left\langle a_{1},b_{1},\ldots,a_{g},b_{g}|a_{1}b_{1}a_{1}^{-1}b_{1}^{-1}\cdots a_{g}b_{g}a_{g}^{-1}b_{g}^{-1}=1\right\rangle ,
\]
and its abelianization is $(\Gamma_{g})_{Ab}=\mathbb{Z}^{2g}$, since
the unique relation is a product of commutators (belongs to $[\Gamma_{g},\Gamma_{g}]$).
A non-orientable compact surface (without boundary) of genus $k$
is a connected sum of $k$ copies of the real projective plane $\mathbb{R}\mathbb{P}^{2}$.
Its fundamental group is denoted by 
\[
\hat{\Gamma}_{k}:=\left\langle a_{1},a_{2},\ldots,a_{k}|a_{1}^{2}\cdots a_{k}^{2}=1\right\rangle ,
\]
and in this case we have: $(\hat{\Gamma}_{k})_{Ab}\cong\mathbb{Z}^{k-1}\oplus\mathbb{Z}_{2}$,
since this is the kernel of the map $\mathbb{Z}^{k}\to\mathbb{Z}$,
sending $(b_{1},\cdots,b_{k})\in\mathbb{Z}^{k}$ to $2(b_{1}+\cdots+b_{k})$,
whose vanishing corresponds to wrtiting $a_{1}^{2}\cdots a_{k}^{2}=1$
additively. As before, we let $F_{r}$ denote the free group in $r$
generators. Note that $\Gamma_{g}$, $\hat{\Gamma}_{k}$ and $F_{r}$
exhaust all fundamental groups of compact surfaces with a finite set
of points removed. Finally, consider the fundamental group 
\[
\Gamma_{a,b}=\langle x,y|x^{a}=y^{b}\rangle
\]
of the complement of a torus knot in $S^{3}$ of type $(a,b)$, where
$a,b\in\mathbb{N}$ are relatively prime. Its abelianization coincides
with the first homology group, which is $\mathbb{Z}$ (rank $1$ and
no torsion).

\subsubsection{The case of $GL_{2}$}

All the $E$-polynomials below depend on a single variable $x=uv$,
so we again use the notation $E_{x}(X):=E(X;\,\sqrt{x},\sqrt{x})$. 
\begin{thm}
\label{thm:E2} The following are the $E$-polynomials of irreducible
$GL_{2}$-character varieties for the given groups $\Gamma$: 
\begin{enumerate}
\item \label{E2-free-irr} For $\Gamma=F_{s+1}$, we have 
\[
\frac{E_{x}(\mathcal{X}_{F_{s+1}}^{irr}GL_{2})}{(x-1)^{s+1}}=(x-1)^{s}x^{s}((x+1)^{s}-1)-\frac{1}{2}(x+1)^{s}+\frac{1}{2}(x-1)^{s}.
\]

\item \label{E2-orient-irr} For $\Gamma_{g}=\pi_{1}(\Sigma_{g})$, with
$c=2g-2$, 
\[
\frac{E_{x}(\mathcal{X}_{\Gamma_{g}}^{irr}GL_{2})}{(x-1)^{c+2}}=(x^{2}-1)^{c}(x^{c}+1)+\frac{(x^{c+1}-x-1)}{2}(x+1)^{c}-\frac{(x^{c+1}-x+1)}{2}(x-1)^{c}-x^{c}.
\]

\item \label{E2-nonorient-irr} For $\hat{\Gamma}_{k}=\pi_{1}(\hat{\Sigma}_{g})$,
with $h=k-2$, 
\begin{eqnarray*}
\frac{E_{x}(\mathcal{X}_{\Gamma_{k}}^{irr}GL_{2})}{(x-1)^{h+1}} & = & 2(x^{h}+1)(x^{2}-1)^{h}+x^{h}(x-1)\frac{(x-1)^{h}+(x+1)^{h}}{2}+\\
 &  & +(2-4x^{h})(x-1)^{h}-(x+1)^{h}-2x^{h}.
\end{eqnarray*}

\item \label{E2-torus-irr} For $\Gamma_{a,b}$ we have: 
\[
\frac{E_{x}(\mathcal{X}_{\Gamma_{a,b}}^{irr}GL_{2})}{x-1}=\begin{cases}
\frac{1}{4}(a-1)(b-1)(x-2), & a,b\text{ both odd}\\
\frac{1}{4}(b-1)(ax-3a+4), & a\mbox{ even, }b\mbox{ odd}.
\end{cases}
\]

\end{enumerate}
\end{thm}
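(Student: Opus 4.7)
The plan is to invert the relation between $E$-polynomials of the full and irreducible $GL_2$-character varieties, using the closed formula in Example \ref{exa:GL2}, and then substitute known expressions. Concretely, writing $A_2^\Gamma(x) := E_x(\mathcal{X}_\Gamma GL_2)$ and $B_n^\Gamma(x) := E_x(\mathcal{X}_\Gamma^{irr} GL_n)$, Example \ref{exa:GL2} gives
\[
B_2^\Gamma(x) = A_2^\Gamma(x) - \tfrac{1}{2}B_1^\Gamma(x)^2 - \tfrac{1}{2}B_1^\Gamma(x^2).
\]
So the whole problem reduces to two ingredients for each of the four classes of groups: the one-dimensional irreducible polynomial $B_1^\Gamma(x)$, and the full $GL_2$ polynomial $A_2^\Gamma(x)$.

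First I would compute $B_1^\Gamma(x) = E_x(\mathcal{X}_\Gamma GL_1) = E_x(\hom(\Gamma_{Ab},\mathbb{C}^*))$ from the abelianization data already recorded in the text: $(\Gamma_{Ab})$ equals $\mathbb{Z}^{s+1}$, $\mathbb{Z}^{2g}$, $\mathbb{Z}^{k-1}\oplus\mathbb{Z}_2$, and $\mathbb{Z}$ for $F_{s+1}$, $\Gamma_g$, $\hat{\Gamma}_k$, and $\Gamma_{a,b}$ respectively, giving $B_1^\Gamma(x) = (x-1)^{s+1},\,(x-1)^{2g},\,2(x-1)^{k-1},\,(x-1)$ by the argument already used in Proposition \ref{prop:reducible_part} (note also that for $n=1$ every representation is irreducible). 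Then I would collect the $A_2^\Gamma(x)$ from \cite{MR} for the free group and from \cite{BH} for the surface group and torus knot cases (in the non-orientable surface case also from \cite{BH}, which handles $\hat{\Gamma}_k$ explicitly). These are exactly the inputs to which Katz's theorem applies in those references.

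Plugging in $B_1^\Gamma$ and $A_2^\Gamma$ into the inversion formula yields $B_2^\Gamma(x)$ directly. The four claimed identities then follow by algebraic manipulation; in each case I would factor out the largest common power of $(x-1)$ from $A_2^\Gamma$ and from $B_1^\Gamma(x)^2 + B_1^\Gamma(x^2)$, using the identity $(x^2-1)^m = (x-1)^m(x+1)^m$ to separate the contributions involving $(x+1)^c$ from those involving $(x-1)^c$. For $\Gamma_{a,b}$ the split into two cases comes directly from the split already present in the Baraglia--Hekmati formula for $A_2^{\Gamma_{a,b}}(x)$ according to the parity of $a$ and $b$.

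The main obstacle is purely bookkeeping: the surface and torus knot formulas for $A_2^\Gamma$ involve several terms of mixed type, and rewriting their difference with $\tfrac{1}{2}B_1^\Gamma(x)^2 + \tfrac{1}{2}B_1^\Gamma(x^2)$ into the compact form displayed in the theorem requires carefully tracking $(x\pm 1)^c$-parts and the low-order polynomial corrections $x^c$, $(x^{c+1}-x\pm 1)$, etc. No new geometric input is needed beyond Proposition \ref{prop:locally-closed-GLn} and Theorem \ref{thm:individual-strata}; once the inversion formula and the abelianization input are in place, the proof is a direct substitution followed by simplification.
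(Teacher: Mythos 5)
Your proposal is correct and follows essentially the same route as the paper: the inversion $B_2^\Gamma(x)=A_2^\Gamma(x)-\tfrac12 B_1^\Gamma(x)^2-\tfrac12 B_1^\Gamma(x^2)$ is exactly the paper's subtraction of the abelian stratum $E_x(\mathcal{X}_\Gamma^{[1^2]}GL_2)$ (computed via Proposition \ref{prop:reducible_part} from the rank and torsion of $\Gamma_{Ab}$) from the full $E$-polynomials taken from Baraglia--Hekmati, followed by the same algebraic simplification. The only cosmetic difference is that you source the free-group $A_2$ from \cite{MR} where the paper uses \cite[Section 6.2]{BH}; both give the same polynomial.
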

\begin{proof}
In all cases, the character varieties $\mathcal{X}_{\Gamma}GL_{2}$
were shown to be of polynomial type, so their $E$-polynomials equal
their counting polynomials computed in \cite{BH}. Thus, to obtain
the formulae above, we consider the stratification: 
\[
\mathcal{X}_{\Gamma}GL_{2}=\mathcal{X}_{\Gamma}^{[2]}GL_{2}\,\sqcup\ \mathcal{X}_{\Gamma}^{[1^{2}]}GL_{2}\cong\mathcal{X}_{\Gamma}^{irr}GL_{2}\sqcup\mathcal{X}_{\Gamma_{Ab}}GL_{2}.
\]
So, $E_{x}(\mathcal{X}_{\Gamma}^{irr}GL_{2})$ is obtained by subtracting,
from $E_{x}(\mathcal{X}_{\Gamma}GL_{2})$, the $E$-polynomial of
the abelian stratum using Proposition~\ref{prop:reducible_part},
given the rank and torsion of $\Gamma_{Ab}$.

In the free group case $\Gamma=F_{r}$, we have $\Gamma_{Ab}=\mathbb{Z}^{r}$
(no torsion), so 
\[
E_{x}(\mathcal{X}_{r}^{[1^{2}]}GL_{2})=\frac{1}{2}(x^{2}-1)^{r}+\frac{1}{2}(x-1)^{2r},
\]
and the above formula comes from \cite[Section 6.2]{BH}, where it
was shown, using $r=s+1$: 
\[
E_{x}(\mathcal{X}_{s+1}GL_{2})=(x-1)^{s+1}\left((x^{3}-x)^{s}-(x^{2}-x)^{s}+x\frac{(x+1)^{s}+(x-1)^{s}}{2}\right).
\]

For the surface group case, we have $(\Gamma_{g})_{Ab}=\mathbb{Z}^{2g}$,
and Proposition~\ref{prop:reducible_part} gives: 
\[
E_{x}(\mathcal{X}_{\Gamma_{g}}^{[1^{2}]}GL_{2})=\frac{1}{2}(x^{2}-1)^{2g}+\frac{1}{2}(x-1)^{4g}.
\]
Thus, to obtain (\ref{E2-orient-irr}), we subtract it from the formula
in \cite[Section 6.5]{BH}, which can be rewritten, letting $c=2g-2$,
as: 
\[
\frac{E_{x}(\mathcal{X}_{\Gamma_{g}}GL_{2})}{(x-1)^{c+2}}=(x^{2}-1)^{c}(x^{c}+1)+\frac{(x^{c+1}+x^{2}+x)}{2}(x+1)^{c}-\frac{(x^{c+1}-x^{2}+x)}{2}(x-1)^{c}-x^{c}.
\]
For (\ref{E2-nonorient-irr}), we have $(\hat{\Gamma}_{k})_{Ab}\cong\mathbb{Z}^{k-1}\oplus\mathbb{Z}_{2},$
so Proposition~\ref{prop:reducible_part} gives us, 
\[
E_{x}(\mathcal{X}_{\Gamma_{k}}^{[1^{2}]}GL_{2})=(x^{2}-1)^{k-1}+2(x-1)^{2k-2}=(x-1)^{k-1}[(x+1)^{k-1}+2(x-1)^{k-1}],
\]
which is subtracted from \cite[Section 6.8]{BH}, using $h=k-2$:
\begin{eqnarray*}
\frac{E_{x}(\mathcal{X}_{\Gamma_{k}}GL_{2})}{(x-1)^{h+1}} & = & 2(x^{h}+1)(x^{2}-1)^{h}+x^{h}(x-1)\frac{(x-1)^{h}+(x+1)^{h}}{2}+\\
 &  & +x[(x+1)^{h}+2(x-1)^{h}]-4(x^{2}-x)^{h}-2x^{h}.
\end{eqnarray*}
For (\ref{E2-torus-irr}), the $E$-polynomial of the character variety
of the torus knot appears in \cite[Section 6.10]{BH}: 
\[
E_{x}(\mathcal{X}_{\Gamma_{a,b}}GL_{2})=\begin{cases}
(x-1)\left(x+\frac{1}{4}(a-1)(b-1)(x-2)\right), & a,b\text{ both odd}\\
(x-1)\left(x+\frac{1}{4}(b-1)(ax-3a+4)\right), & a\mbox{ even, }b\mbox{ odd}.
\end{cases}
\]
So, to get the irreducible part, we again subtract the abelian stratum:
\[
E_{x}(\mathcal{X}_{\Gamma_{a,b}}^{[1^{2}]}GL_{2})=\frac{1}{2}(x^{2}-1)+\frac{1}{2}(x-1)^{2}=x^{2}-x,
\]
using $(\Gamma_{a,b})_{Ab}=\mathbb{Z}$ in Proposition~\ref{prop:reducible_part}. \end{proof}
\begin{cor}
The variety $\mathcal{X}_{\Gamma}^{irr}GL_{2}$ has respectively 1,
1, 2, $\frac{1}{4}(a-1)(b-1)$ and $\frac{1}{4}a(b-1)$ irreducible
components, for the groups $F_{r}$, $\Gamma_{g}$, $\hat{\Gamma}_{k}$,
$\Gamma_{a,b}$ ($ab$ odd), and $\Gamma_{a,b}$ ($a$ even, $b$
odd) respectively. The Euler characteristics of all these character
varieties are zero.\end{cor}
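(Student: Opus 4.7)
The plan is to extract both invariants directly from the $E$-polynomials of $\mathcal{X}_{\Gamma}^{irr}GL_{2}$ listed in Theorem \ref{thm:E2}, using two general principles about $E$-polynomials of complex quasi-projective varieties. First, if $X$ is a complex quasi-projective variety of pure complex dimension $d$, then $H_c^{2d}(X,\mathbb{Q})$ is a pure Hodge structure of type $(d,d)$ whose rank equals the number of $d$-dimensional irreducible components of $X$. Consequently, the coefficient of $x^{d}$ in $E_{x}(X)$ (equivalently, of $(uv)^{d}$ in $E(X;u,v)$) counts the top-dimensional irreducible components. Second, the compactly supported Euler characteristic satisfies $\chi^{c}(X)=E(X;1,1)$, and for the cases at hand this agrees with $\chi(X)$.

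Before reading off leading coefficients, I would record that in each of the four cases the irreducible character variety $\mathcal{X}_{\Gamma}^{irr}GL_{2}$ is equidimensional, so that top-dimensional components account for all of them. For $\Gamma=F_{r}$ this is \cite{FL2} (the irreducible locus is the smooth locus of $\mathcal{X}_{F_{r}}GL_{n}$); for $\Gamma_{g}$ it follows from Goldman's symplectic picture of $\mathcal{X}_{\Gamma_{g}}^{irr}GL_{n}$, which is smooth and equidimensional; for $\hat{\Gamma}_{k}$ and $\Gamma_{a,b}$, equidimensionality can be seen from the explicit description of irreducible representations (two orientation-character-twisted components in the non-orientable case, and one component per choice of eigenvalue pair $(\zeta_{a}^{i},\zeta_{b}^{j})$ with $i,j$ in suitable ranges in the torus knot case).

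With equidimensionality in hand, the component counts follow by inspecting the leading term in $x$ of each formula in Theorem \ref{thm:E2}. In (\ref{E2-free-irr}) the dominant summand is $(x-1)^{s+1}\cdot(x-1)^{s}x^{s}\cdot x^{s}$, whose leading coefficient is $1$; in (\ref{E2-orient-irr}) the dominant summand is $(x-1)^{c+2}\cdot(x^{2}-1)^{c}\cdot x^{c}$, again of leading coefficient $1$; in (\ref{E2-nonorient-irr}) the dominant summand is $(x-1)^{h+1}\cdot 2(x^{h}+1)(x^{2}-1)^{h}$, with leading coefficient $2$; and in (\ref{E2-torus-irr}) the two cases are already displayed as $(x-1)\cdot\tfrac{1}{4}(a-1)(b-1)(x-2)$ and $(x-1)\cdot\tfrac{1}{4}(b-1)(ax-3a+4)$, whose leading coefficients are $\tfrac{1}{4}(a-1)(b-1)$ and $\tfrac{1}{4}a(b-1)$, respectively. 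These match the numbers claimed in the corollary.

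For the Euler characteristics, the observation is that every formula in Theorem \ref{thm:E2} is explicitly divisible by $(x-1)^{m}$ for some $m\geq 1$ (namely $m=s+1$, $c+2$, $h+1$, and $1$ in the four cases). Thus $E_{x}(\mathcal{X}_{\Gamma}^{irr}GL_{2})\big|_{x=1}=0$ in each case, so $\chi(\mathcal{X}_{\Gamma}^{irr}GL_{2})=0$. The only real obstacle is the equidimensionality check, which prevents lower-dimensional components from being missed by the leading-coefficient argument; but in each of the listed families this is already present in the literature or easily verified from the parametrization of irreducible representations.
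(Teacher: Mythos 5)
Your proposal is correct and takes essentially the same route as the paper, which simply states that the number of irreducible components equals the leading coefficient of the $E$-polynomial and that the Euler characteristic is obtained by setting $x=1$. You are in fact more careful than the paper: you justify the leading-coefficient principle via the purity of $H_c^{2d}$ and explicitly flag the equidimensionality hypothesis needed for it to count \emph{all} components, a point the paper's two-sentence proof leaves implicit.
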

\begin{proof}
The number of irreducible components equals the leading coefficient
of the corresponding $E$-polynomial. For the Euler characteristic,
just substitute $x=1$ in the appropriate formulae. 
\end{proof}

\subsubsection{The case of $GL_{3}$}

In the case $n=3$, the stratification by partition type is: 
\[
\mathcal{X}_{\Gamma}GL_{3}=\mathcal{X}_{\Gamma}^{[3]}GL_{3}\sqcup\mathcal{X}_{\Gamma}^{[1\;2]}GL_{3}\sqcup\mathcal{X}_{\Gamma}^{[1^{3}]}GL_{3},
\]
and $E_{x}(\mathcal{X}_{\Gamma}^{[1\;2]}GL_{3})=E_{x}(\mathcal{X}_{\Gamma}^{irr}GL_{1})\,E_{x}(\mathcal{X}_{\Gamma}^{irr}GL_{2})$.
For the orientable surface group $\Gamma_{g}$, $E(\mathcal{X}_{\Gamma_{g}}GL_{3})$
has been computed in \cite[Section 7.1]{BH}. The abelian stratum
is also easy to get: in this case (from Example \ref{exa:GL3}), we
have 
\[
E_{x}(\mathcal{X}_{\Gamma_{g}}^{[1^{3}]}GL_{3})=\frac{B_{1}(x^{3})}{3}+\frac{B_{1}(x^{2})B_{1}(x)}{2}+\frac{B_{1}(x)^{3}}{6},
\]
with $B_{1}(x)=(x-1)^{2g}$ (as $(\Gamma_{g})_{Ab}=\mathbb{Z}^{2g}$).
Therefore, we can obtain the $E$-polynomial of the irreducible stratum
as: 
\[
E_{x}(\mathcal{X}_{\Gamma_{g}}^{irr}GL_{3})=E_{x}(\mathcal{X}_{\Gamma_{g}}GL_{3})-E_{x}(\mathcal{X}_{\Gamma_{g}}^{irr}GL_{1})\,E_{x}(\mathcal{X}_{\Gamma_{g}}^{irr}GL_{2})-E_{x}(\mathcal{X}_{\Gamma_{g}}^{[1^{3}]}GL_{3})
\]
Here, we just present the final result. 
\begin{thm}
\label{thm:GL3-orient}The irreducible $GL_{3}$-character variety
of a compact orientable surface of genus $g$ has zero Euler characteristic,
is an irreducible variety, and its $E$ polynomial, setting $c=2g-2$,
is given by: 
\begin{eqnarray*}
\frac{E_{x}(\mathcal{X}_{\Gamma_{g}}^{irr}GL_{3})}{(x-1)^{c+2}} & = & (x-1)^{2c+2}[x^{3c}-\frac{x^{c+1}}{2}-(x+1)^{c}(x^{c}+1)+\frac{1}{3}]\\
 & + & (x-1)^{2c+1}(x-2x^{2c})[\frac{x^{c}(x-2)}{2}+(x+1)^{c}(x^{c}+1)]\\
 & + & (x-1)^{2c}(x^{2}+x+1)^{c}[(x+1)^{c}(x^{3c}+1)+x^{2c}]\\
 & + & (x-1)^{2c}(x-2)x^{2c}[(x+1)^{c}(x^{c}+1)+\frac{x^{c}(x-3)}{6}]\\
 & + & \frac{(x-1)^{c+1}(x+1)^{c}}{2}[x^{c+1}-x^{3c+1}]\\
 & + & (x-1)^{c}(x^{c}-1)[x^{c-2}+x^{c+1}-2]+(x-1)^{c+2}[x^{2c-2}-x^{c-2}]\\
 & + & \frac{(x^{2}+x+1)^{c}}{3}[x^{3c+1}(x+1)-(x^{2}+x+1)]-x^{3c}\;.
\end{eqnarray*}
\end{thm}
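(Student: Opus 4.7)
The plan is to apply Theorem \ref{thm:individual-strata} with $n=3$, which, together with the stratification by partition type established in Proposition \ref{prop:locally-closed-GLn} and the additivity of $E$, reduces the computation to an algebraic manipulation. Concretely, I would write
\[
E_{x}(\mathcal{X}_{\Gamma_g}^{irr}GL_3)=E_{x}(\mathcal{X}_{\Gamma_g}GL_3)-E_{x}(\mathcal{X}_{\Gamma_g}^{[1\,2]}GL_3)-E_{x}(\mathcal{X}_{\Gamma_g}^{[1^{3}]}GL_3),
\]
and compute each of the three terms on the right hand side separately.

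For the first term, I would quote the counting polynomial of $\mathcal{X}_{\Gamma_g}GL_3$ found by Baraglia--Hekmati in \cite[Section 7.1]{BH}; since that variety is of polynomial type, Katz's theorem identifies the count with the $E$-polynomial. For the abelian stratum, I would use Proposition \ref{prop:reducible_part} with $(\Gamma_g)_{Ab}=\mathbb{Z}^{2g}$ and $N=1$, which gives $B_{1}^{\Gamma_g}(x)=(x-1)^{2g}$, plugged into the three-term formula from Example \ref{exa:GL3}:
\[
E_{x}(\mathcal{X}_{\Gamma_g}^{[1^{3}]}GL_3)=\frac{(x^{3}-1)^{2g}}{3}+\frac{(x^{2}-1)^{2g}(x-1)^{2g}}{2}+\frac{(x-1)^{6g}}{6}.
\]
For the mixed stratum, Theorem \ref{thm:individual-strata} (or directly Proposition \ref{prop:sym-product}) yields
\[
E_{x}(\mathcal{X}_{\Gamma_g}^{[1\,2]}GL_3)=B_{1}^{\Gamma_g}(x)\,B_{2}^{\Gamma_g}(x)=(x-1)^{2g}\,E_{x}(\mathcal{X}_{\Gamma_g}^{irr}GL_2),
\]
where the last factor is the formula in Theorem \ref{thm:E2}(\ref{E2-orient-irr}) with $c=2g-2$.

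The main obstacle is purely book-keeping: the three polynomials involve the factors $(x-1)$, $(x+1)$, $(x^2+x+1)$ to assorted $c$-th powers, so subtracting and collecting into the compact form claimed in the statement requires careful grouping. I would factor out $(x-1)^{c+2}$ throughout, rewrite $x^2-1=(x-1)(x+1)$ and $x^3-1=(x-1)(x^2+x+1)$, and organize the result by the type of factor appearing with exponent $c$ or $2c$; this matches precisely the seven lines displayed in the theorem. Finally, for the two corollary-like assertions: setting $x=1$ in each of the three summands gives $0$ (every stratum contains a factor $(x-1)$ to a positive power, and the same holds for the Baraglia--Hekmati formula), so $\chi(\mathcal{X}_{\Gamma_g}^{irr}GL_3)=0$; and the leading coefficient in $x$ is $1$, coming from the top-degree term of $E_{x}(\mathcal{X}_{\Gamma_g}GL_3)$, which yields that $\mathcal{X}_{\Gamma_g}^{irr}GL_3$ has a single irreducible component.
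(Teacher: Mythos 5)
Your proposal follows exactly the route the paper takes: decompose $\mathcal{X}_{\Gamma_g}GL_3$ into the strata $[3]$, $[1\,2]$, $[1^3]$, take the Baraglia--Hekmati polynomial for the full variety, compute the abelian stratum from Proposition \ref{prop:reducible_part} with $B_1(x)=(x-1)^{2g}$ and the mixed stratum as $B_1(x)B_2(x)$ using Theorem \ref{thm:E2}, and subtract; the paper likewise leaves the final algebraic simplification implicit and the leading-coefficient/evaluation-at-$x=1$ arguments are the same ones used in its $GL_2$ corollary. The approach is correct and essentially identical to the paper's.
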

\begin{rem}
The case of a free group of rank $r$, $\Gamma=F_{r}$, can be treated
in a completely explicit way for every $n$. In the article \cite{FNZ},
where we relate the $E$-polynomials of $\mathcal{X}_{F_{r}}G$, for
$G=GL_{n}$, $SL_{n}$ and $PGL_{n}$, we have obtained explicit formulae
for the $E$-polynomials and Euler characteristics of all partition
type strata, for all values of $r$ and $n$. \end{rem}

\end{document}